
\documentclass[amsa]{ipart}

\newif\ifamsa 
\amsatrue     

\RequirePackage[OT1]{fontenc}
\RequirePackage{amsthm,amsmath,amssymb}
\RequirePackage[numbers,square]{natbib} %
\RequirePackage[colorlinks]{hyperref}

\newtheorem{thm}{Theorem}

\pubyear{0000}
\volume{0}
\issue{0}
\firstpage{1}
\lastpage{54}

\received{\smonth{1} \sday{1}, \syear{2016}} 

\usepackage{mathtools}
\usepackage[normalem]{ulem} 
\usepackage{subcaption}
\usepackage{enumerate}
\usepackage[ruled,vlined]{algorithm2e}
\usepackage{amsfonts}
\usepackage{amssymb}
\usepackage{booktabs}
\usepackage{multirow}



\newcommand{\rev}[1]{{\color{blue}#1}}
\newcommand{\commwy}[1]{{\color{red}(#1 -- Wotao)}} 
\newcommand{\commzp}[1]{{\color{red}(#1 -- Zhimin)}} 
 
\newcommand{\remove}[1]{{}}
\newcommand{\cut}[1]{}


\newcommand{\vx}{{\mathbf{x}}}

\newcommand{\vz}{{\mathbf{z}}}

\newcommand{\vZ}{{\mathbf{Z}}}

\newcommand{\cA}{{\mathcal{A}}}
\newcommand{\cB}{{\mathcal{B}}}
\newcommand{\cC}{{\mathcal{C}}}

\newcommand{\cE}{{\mathcal{E}}}
\newcommand{\cF}{{\mathcal{F}}}

\newcommand{\cI}{{\mathcal{I}}}
\newcommand{\cJ}{{\mathcal{J}}}

\newcommand{\cM}{{\mathcal{M}}}
\newcommand{\cN}{{\mathcal{N}}}

\newcommand{\cP}{{\mathcal{P}}}

\newcommand{\cR}{{\mathcal{R}}}
\newcommand{\cS}{{\mathcal{S}}}
\newcommand{\cT}{{\mathcal{T}}}

\newcommand{\cV}{{\mathcal{V}}}

\newcommand{\cZ}{{\mathcal{Z}}}


\newcommand{\FF}{\mathbb{F}}
\newcommand{\RR}{\mathbb{R}}

\newcommand{\HH}{\mathbb{H}}
\newcommand{\II}{\mathbb{I}}
\newcommand{\JJ}{\mathbb{J}}

\newcommand{\GG}{\mathbb{G}}

\newcommand{\sign}{\mathrm{sign}}


\newcommand{\St}{{\mathrm{subject~to}}} 
\newcommand{\Diag}{{\mathrm{Diag}}} 
\newcommand{\Proj}{{\mathrm{Proj}}}
\newcommand{\prj}{{\mathbf{proj}}}
\newcommand{\prox}{\mathbf{prox}}

\newcommand{\TS}{{\cT_{\mathrm{3S}}}}
\newcommand{\TFBS}{{\cT_{\mathrm{FBS}}}}
\newcommand{\TBFS}{{\cT_{\mathrm{BFS}}}}
\newcommand{\TDRS}{{\cT_{\mathrm{DRS}}}}

\newcommand{\TFBFS}{{\cT_{\mathrm{FBFS}}}}
\newcommand{\TFDRS}{{\cT_{\mathrm{FDRS}}}}
\newcommand{\TVC}{{\cT_{\textnormal{CV}}}}

\newcommand{\Grph}{\mathrm{Grph}}
\DeclareMathOperator*{\argmin}{arg\,min}
\DeclareMathOperator*{\argmax}{arg\,max}
\DeclareMathOperator*{\Min}{minimize}


\newcommand{\nops}[2]{\ensuremath{\mathfrak{M}\left[{#1}\mapsto {#2}\right]}}


\newcommand{\bc}{\begin{center}}
\newcommand{\ec}{\end{center}}

\newcommand{\bdm}{\begin{displaymath}}
\newcommand{\edm}{\end{displaymath}}

\newcommand{\beq}{\begin{equation}}
\newcommand{\eeq}{\end{equation}}

\newcommand{\bfl}{\begin{flushleft}}
\newcommand{\efl}{\end{flushleft}}

\newcommand{\bt}{\begin{tabbing}}
\newcommand{\et}{\end{tabbing}}

\newcommand{\beqn}{\begin{align}}
\newcommand{\eeqn}{\end{align}}

\newcommand{\beqs}{\begin{align*}} 
\newcommand{\eeqs}{\end{align*}}  

\newtheorem{assumption}{Assumption}


\ifamsa

    \newtheorem{definition}{Definition}
    
    \newtheorem{remark}{Remark}
    \newtheorem{lemma}{Lemma}
    \newtheorem{proposition}{Proposition}
    \newtheorem{example}{Example}
\fi


\usepackage{amsmath}
\RequirePackage[normalem]{ulem} 
\RequirePackage{color}\definecolor{RED}{rgb}{1,0,0}\definecolor{BLUE}{rgb}{0,0,1} 
\providecommand{\DIFaddtex}[1]{{\protect\color{blue}\uwave{#1}}} 
\providecommand{\DIFaddbegin}{} 
\providecommand{\DIFaddend}{} 
\providecommand{\DIFdelbegin}{} 
\providecommand{\DIFdelend}{} 
\providecommand{\DIFadd}[1]{\texorpdfstring{\DIFaddtex{#1}}{#1}} 

\begin{document}

\begin{frontmatter}
\title{Coordinate friendly structures, algorithms and applications\thanksref{T1}}
\thankstext{T1}{This work is supported by NSF Grants DMS-1317602 and ECCS-1462398.}
\runtitle{Coordinate friendly structures, algorithms, and applications}

\begin{aug}
\author{\fnms{Zhimin} \snm{Peng}\ead[label=e1]{zhimin.peng@math.ucla.edu}},
\address{PO Box 951555\\
UCLA Math Department\\
Los Angeles, CA 90095 \\
\printead{e1}}
\author{\fnms{Tianyu} \snm{Wu}\ead[label=e2]{wuty11@math.ucla.edu}},
\address{PO Box 951555\\
UCLA Math Department\\
Los Angeles, CA 90095 \\
\printead{e2}}
\author{\fnms{Yangyang} \snm{Xu}\ead[label=e3]{yangyang@ima.umn.edu}},
\address{207 Church St SE \\
University of Minnesota, Twin Cities \\
Minneapolis, MN 55455 \\
\printead{e3}}
\author{\fnms{Ming} \snm{Yan}\ead[label=e4]{yanm@math.msu.edu}},
\address{Department of Computational Mathematics, Science and Engineering\\
Department of Mathematics\\
Michigan State University \\
East Lansing, MI 48824 \\
\printead{e4}}
\and
\author{\\\fnms{Wotao} \snm{Yin}
\ead[label=e5]{wotaoyin@math.ucla.edu}}
\address{PO Box 951555\\
UCLA Math Department\\
Los Angeles, CA 90095 \\
\printead{e5}}


\affiliation{Some University and Another University}

\end{aug}

\begin{abstract}
This paper focuses on  \emph{coordinate update methods}, which are useful for solving problems involving large or high-dimensional datasets. They decompose a problem into simple subproblems, where each  updates one, or a small block of, variables while fixing others. These methods can deal with linear and nonlinear mappings,  smooth and nonsmooth functions, as well as convex and nonconvex problems. In addition, they are easy to parallelize.  

The great performance of coordinate update methods depends on solving simple subproblems. To derive simple subproblems for several new classes of applications, this paper systematically studies  \emph{coordinate friendly} operators that perform low-cost coordinate updates. 


Based on the discovered coordinate friendly operators, as well as operator splitting techniques, we obtain new coordinate update algorithms for a variety of problems in machine learning, image processing, as well as sub-areas of optimization. Several problems are treated with coordinate update for the first time in history. The obtained algorithms are scalable to  large instances through parallel and even asynchronous computing. We present numerical examples to illustrate how effective these algorithms are.

\end{abstract}


\begin{keyword}
\kwd{coordinate update}
\kwd{fixed point, operator splitting, primal-dual splitting}
\kwd{parallel}
\kwd{asynchronous}
\end{keyword}
\end{frontmatter}

\newpage


\section{Introduction}
This paper studies  \emph{coordinate update methods}, which reduce a large problem to smaller subproblems and are useful for solving large-sized problems. These methods handle both  linear and nonlinear maps, smooth and nonsmooth functions, and convex and nonconvex problems. 
The common special examples of these methods are the Jacobian and Gauss-Seidel algorithms for solving a linear system of equations, and they are also commonly used for solving differential equations (e.g., \emph{domain decomposition}) and optimization problems (e.g., \emph{coordinate descent}).

After coordinate update methods were initially introduced in each topic area, their evolution  had been slow until recently, when data-driven applications (e.g., in signal processing,  image processing, and statistical and machine learning) impose strong demand for scalable numerical solutions; consequently,  numerical methods of \emph{small footprints}, including  coordinate update methods, become increasingly popular. These methods are generally applicable to many problems involving large
or high-dimensional datasets.

Coordinate update methods generate simple subproblems that update one variable, or a small block of variables, while fixing others. The variables can be updated in  the \textit{cyclic}, \textit{random}, or \textit{greedy} orders, which can be selected to adapt to the problem. The subproblems that perform coordinate updates also have different forms. 
Coordinate updates can be  applied either sequentially on a single thread or concurrently on multiple threads, or even in an asynchronous parallel fashion. They have been demonstrated to give rise to very powerful and scalable algorithms.

\cut{\begin{table}
\begin{center}
\begin{tabular}{c|c|c}
\hline
Benefits & Coordinate Update & Full Update\\\hline\hline
memory footprint & small & big \\\hline
per iteration complexity & $O(n)$ & $O(n^2)$\\\hline
epoch & less (depend on updating order and stepsize) & more \\\hline
scalability & scalable & not scalable\\\hline
\end{tabular}
\end{center}
\caption{Benefits of coordinate update}\label{table:benefits}
\end{table}}

Clearly, the strong performance of coordinate
update methods relies on solving \emph{simple} subproblems. The cost of each subproblem must be proportional to how many coordinates it updates. 
When there are totally $m$ coordinates, the cost of updating one coordinate should not exceed the average per-coordinate cost of the full update (made to all the coordinates at once). Otherwise, coordinate update is not \emph{computationally worthy}. For example, let  $f:\RR^m\to \RR$ be  a $C^2$ function, and consider the Newton update  $x^{k+1} \gets x^k - \big(\nabla^2 f(x^k)\big)^{-1}\nabla f(x^k)$. Since updating each $x_i$ (keeping others  fixed) still requires forming the Hessian matrix $\nabla^2 f(x)$ (at least $O(m^2)$ operations) and factorizing it ($O(m^3)$ operations), there is little to save in computation compared to updating all the components of $x$ at once; hence, the Netwon's method is generally not amenable to coordinate update.

The recent coordinate-update literature has introduced new algorithms. However, they are primarily applied to a few, albeit important, classes of problems  that arise in machine learning. For many complicated problems,  it remains open whether simple subproblems can be obtained. We provide positive answers to several new classes of applications and introduce their coordinate update algorithms. 
Therefore, the focus of this paper is to build a set of tools for deriving simple subproblems and extending  coordinate updates to new territories of applications.

We will frame each application into an equivalent fixed-point problem
\beq\label{fpprob}
x = \cT x
\eeq
by specifying the operator  $\cT:\HH \to \HH$, where $x=(x_1,\ldots,x_m)\in \HH$, and $\HH=\HH_1\times \cdots \times \HH_m$ is a Hilbert space.
In many cases, the operator $\cT$ itself represents an iteration:
\beq\label{fpi}
x^{k+1} = \cT x^k
\eeq
such that the limit of the sequence $\{x^k\}$  exists and is a fixed point of $\cT$, which is also a solution to the application or from which a solution to the application can be obtained. We call the scheme~\eqref{fpi} a \emph{full update}, as opposed to updating one $x_i$ at a time. The  scheme~\eqref{fpi} has a number of interesting special cases including  methods of gradient descent, gradient projection, proximal gradient,  operator splitting, and many others.

We study the structures of $\cT$ that make the  following coordinate update algorithm \emph{computationally worthy}
\beq\label{cuitr}
x^{k+1}_i = x_i^k - \eta_k (x^k-\cT x^k)_i,
\eeq
where  $\eta_k$ is a  step size and $i\in [m] := \{1,\ldots,m\}$ is arbitrary. Specifically, the cost of performing ~\eqref{cuitr}  is roughly $\frac{1}{m}$, or lower, of that of performing~\eqref{fpi}. We call such $\cT$ a \textbf{Coordinate Friendly (CF)} operator, which we will formally define.

This paper will explore a variety of CF operators. Single CF operators include linear maps, projections to certain simple sets, proximal maps and gradients of (nearly) separable functions, as well as gradients of sparsely supported functions. There are many more composite CF operators, which are built from single CF and non-CF operators under a set of rules.  The fact that some of these operators are CF is not obvious.

These CF operators let us derive powerful coordinate update algorithms for a variety of applications including, but not limited to, linear and second-order cone programming, variational image processing, support vector machine, empirical risk minimization, portfolio optimization, distributed computing, and nonnegative matrix factorization. For each application, we present an algorithm in the form of~\eqref{fpi} so that its coordinate update~\eqref{cuitr} is efficient. 
{In this way we obtain new coordinate update algorithms for these applications, some of which  are treated with coordinate update for the first time. 
}
\cut{\rev{We would like to point out our coordinate update scheme is different from the domain decomposition approach. Domain decomposition splits the variable domain into small subdomains and solves the original problem in each subdomains, with variables coherent on the intersections or the boundary. Our coordinate update scheme, on the other hand, is based on operator splitting. Variables are divided into coordinates corresponding to the algorithm instead of any physical grid domains and updating each coordinate involves solving a subproblem different from the original one.
}
}


The developed coordinate update algorithms are easy to parallelize. In addition,  the work in this paper  gives rise to parallel and asynchronous extensions to  existing algorithms including the Alternating Direction Method of Multipliers (ADMM), primal-dual splitting algorithms, and others.

The paper is organized as follows. \S\ref{sec:literature} reviews the existing frameworks of coordinate update algorithms. \S\ref{sec:cuf} defines the CF operator and discusses different classes of CF operators. \S\ref{sec:comp-cuf}  introduces a set of rules to obtain composite CF operators and applies the results to operator splitting methods. \S\ref{sec:p-d} is dedicated to  primal-dual splitting methods with CF operators, where existing ones are reviewed and a new one is introduced. Applying the results of previous sections, \S\ref{sec:applications} obtains novel coordinate update algorithms for a variety of applications, some of which have been tested with their numerical results presented in \S\ref{sec:numerical}.

Throughout this paper, all functions $f,g,h$ are proper closed convex and can take the extended value $\infty$, and all sets $X,Y,Z$ are nonempty closed convex. \cut{, and  an operator $\cT:\HH\to\HH$ is single-valued unless otherwise stated.} The indicator function $\iota_X(x)$ returns $0$ if $x\in X$, and $\infty$ elsewhere. For a positive integer $m$, we let $[m]:=\{1,\ldots,m\}$.

\cut{

decompose  the variables in a large problem into a number of small blocks, giving rise to simple subproblems that have low complexity, small memory footprints, and can be solved either sequentially or in parallel.

Coordinate methods perform coordinate updates by keeping the other coordinates fixed. This often reduces to a lower dimensional subproblem and has lower per iteration computation complexity and space complexity compared to the fully update. This type of methods are often easy to implement.

For huge scale problems, there is a strong demand to solve the problem in a parallel, distributed and decentralized fashion. This is also in conformity with the ever increasing power of high performance computing systems.  However, due to the sequential (Gauss-Seidel approach) nature, it is often not straightforward to parallelize BC methods.]

We study the

The goal of this paper is to identify CF maps where parallelization can be applied without incurring high overhead.}

\subsection{Coordinate Update Algorithmic Frameworks}\label{sec:literature}
This subsection reviews the  \emph{sequential} and \emph{parallel} algorithmic frameworks for coordinate updates, as well as the relevant literature. 

The general framework of coordinate update is
\begin{enumerate}
\item set $k\gets 0$ and initialize $x^0\in\HH=\HH_1\times \cdots \times \HH_m$
\item while \emph{not converged} do
\item \quad select an index $i_k\in [m]$;
\item \quad update $x^{k+1}_{i}$ for $i={i_k}$ while keeping $x_i^{k+1}=x_i^k$, $\forall\,i\not ={i_k}$;
\item \quad $k\gets k+1$;
\end{enumerate}
Next we review the index rules and the methods to update $x_i$.

\subsubsection{Sequential Update} In this framework, there is a sequence of coordinate indices $i_1,i_2,\ldots$ chosen according to one of the following rules: cyclic, cyclic permutation, random, and greedy rules. At  iteration $k$, only the $i_k$th coordinate is updated:
$$ \begin{cases}
x^{k+1}_{i} = x_{i}^k - \eta_k(x^k-\cT x^k)_{i},& i=i_k,\\
x^{k+1}_{i} = x_i^k,&\text{for all } i\not= i_k.
\end{cases}
$$
Sequential updates have been applied to many problems such as the Gauss-Seidel iteration for solving a linear system of equations, alternating projection \cite{von1949rings,bauschke1993convergence} for finding a point in the intersection of two sets,  ADMM~\cite{glowinski1975ADMM, gabay1976ADMM} for solving monotropic programs, and Douglas-Rachford Splitting (DRS)~\cite{douglas1956DRS}  for finding a zero to the sum of two operators. 

In optimization,  \emph{coordinate descent} algorithms, at each iteration, minimize the function $f(x_1,\ldots,x_m)$ by fixing all but one variable $x_i$. Let 
$$x_{i-}:=(x_1,\ldots,x_{i-1}),\quad x_{i+}=(x_{i+1},\ldots,x_m)$$
collect all but the $i$th coordinate of $x$. Coordinate descent solves one of the following subproblems:
\begin{subequations}\label{coordes}
\begin{align}
(\cT x^k)_i & = \argmin_{x_i}f(x_{i-}^k,x_i,x_{i+}^k),\\
(\cT x^k)_i & = \argmin_{x_i}f(x_{i-}^k,x_i,x_{i+}^k)+\frac{1}{2\eta_k}\|x_i-x_i^k\|^2,\\
(\cT x^k)_i & = \argmin_{x_i}\,\langle \nabla_i f(x^k),x_i \rangle+\frac{1}{2\eta_k}\|x_i-x_i^k\|^2,\\
(\cT x^k)_i & = \argmin_{x_i}\,\langle \nabla_i f^{\mathrm{diff}}(x^k),x_i \rangle+f_i^{\mathrm{prox}}(x_i)+\frac{1}{2\eta_k}\|x_i-x_i^k\|^2,
\end{align}
\end{subequations}
which are called \emph{direct} update, \emph{proximal} update,  \emph{gradient} update, and \emph{prox-gradient} update, respectively. The last update applies to the function $$f(x) = f^{\mathrm{diff}}(x)+\sum_{i=1}^mf^{\mathrm{prox}}_i(x_i),$$ where $f^{\mathrm{diff}}$ is differentiable and each $f^{\mathrm{prox}}_i$ is proximable (its proximal map takes $O\big(\dim(x_i)\,\mathrm{polylog}(\dim(x_i))\big)$ operations to compute).


\textbf{Sequential-update literature.} Coordinate descent algorithms date back to the 1950s~\cite{hildreth1957quadprog}, when the \emph{cyclic} index rule was used. Its convergence has been established under a variety of cases, for both convex and nonconvex objective functions; see~\cite{Warga-63,zadeh1970note, Sargent-Sebastian-73,Han-88,luo1992convergence, Tseng-93, Grippo-Sciandrone-00, Tseng-01, razaviyayn2013unified, beck2013convergence, hong2015iteration, wright2015coordinate}. Proximal updates are studied in~\cite{Grippo-Sciandrone-00, attouch2010proximal} and developed into prox-gradient updates in~\cite{tseng2009_CGD, tseng2009block-linear, bolte2014proximal} and mixed updates in~\cite{XY_2013_multiblock}.

The \emph{random} index rule first appeared in~\cite{nesterov2012cd} and then~\cite{richtarik2014iteration, Lu_Xiao_rbcd_2015}. Recently,~\cite{XY_2014_ecd,Xu2015_APG_NTD} compared the convergence speeds of cyclic and stochastic update-orders. The gradient update has been relaxed to  stochastic gradient update for large-scale problems in~\cite{DangLan-SBMD, XY_2015_bsg}.

The \emph{greedy} index rule leads to fewer iterations but is often impractical since it requires a lot of effort to calculate  scores for all the coordinates. However, there are cases where calculating the scores is inexpensive~\cite{bertsekas1999nonlinear, li2009gcoord, wu2008coordinate} and the save in the total number of iterations significantly outweighs the extra calculation \cite{tseng2009_CGD, dhillon2011nearest, PYY_2013_GRock, schmidt2014coordinate}.

\textbf{A simple example.} We present the coordinate update algorithms under different index rules for solving a simple least squares problem:  
$$\Min_{x}\, f(x):= \frac{1}{2} \|A x - b\|^2,$$
where $A \in \RR^{p \times m}$ and $b \in \RR^p$ are Gaussian random. Our goal is to numerically demonstrate the advantages of coordinate updates over the  full update of  gradient descent:
$$x^{k+1} = x^k - \eta_k A^{\top}(A x^k - b).$$
The four tested index rules are: cyclic, cyclic permutation, random, and greedy under the Gauss-Southwell\footnote{it selects $i_k=\argmax_i\|\nabla_if(x^k)\|$.} rule.
Note that because this example is very special, the comparisons of different index rules are far from conclusive.

In the full update, the step size $\eta_k$ is set to the theoretical upper bound $\frac{2}{\|A\|_2^2}$, where  $\|A\|_2$ denotes the matrix operator norm and equals the largest singular value of $A$. For each coordinate update to $x_i$, the step size $\eta_k$ is set to $\frac{1}{(A^{\top}A)_{ii}}$. All of the full and coordinate updates have the same \emph{per-epoch} complexity, so we plot the objective errors in Figure \ref{fig:ls_full_vs_coord}.
\begin{figure}[!htbp] \centering
\includegraphics[width=50mm]{./figs/randn_matrix_cropped}

\caption{Gradient descent: the coordinate updates are faster than the full update since the former can take larger steps at each step.}
\label{fig:ls_full_vs_coord}
\end{figure}

\subsubsection{Parallel Update} As one of their main advantages, coordinate update algorithms are easy to parallelize. In this subsection, we discuss  both synchronous (sync) and asynchronous (async) parallel updates.

\textbf{Sync-parallel (Jacobi) update} specifies a sequence of index subsets $\II_1,\II_2,\ldots \subseteq [m]$, and at each iteration $k$,  the coordinates in $\II_k$ are updated in parallel by multiple agents:
$$ \begin{cases}
x^{k+1}_{i} = x_{i}^k - \eta_k(x^k-\cT x^k)_{i},& i\in \II_k,\\
x^{k+1}_{i} = x_i^k,&i\not\in \II_k.
\end{cases}
$$
Synchronization across all agents ensures that  all $x_i$ in $\II_k$ are updated and also written to the memory before the next iteration starts. Note that, if $\II_k= [m]$ for all $k$, then all the coordinates are updated and, thus, each iteration reduces to the full update: $x^{k+1} =  x^k - \eta_k(x^k-\cT x^k).$ \cut{\color{red}This, of course, does not mean solving $x=\cT x$ in one step; therefore, do not confuse performing an  update in~\eqref{coordes} for all the coordinates in parallel from the joint minimization over all the variables.}

\textbf{Async-parallel update.} In this setting, a set of agents  still perform parallel updates, but synchronization is eliminated or weakened. Hence, each agent continuously applies~\eqref{fm:async}, which reads  $x$  from and writes $x_i$ back to the shared memory (or through communicating with other agents without shared memory):
\beq\label{fm:async} \begin{cases}
x^{k+1}_{i} = x_{i}^k - \eta_k \left((\cI-\cT) x^{k-d_k}\right)_{i},& i=i_k,\\
x^{k+1}_{i} = x_i^k,& \text{for all }i\not= i_k.
\end{cases}
\eeq
Unlike before, $k$ increases  whenever any agent completes an update.

The lack of synchronization often results in computation with out-of-date information. During the computation of the $k$th update, other agents make $d_k$ updates to $x$ in the shared memory; when the $k$th update is written, its input is already $d_k$ iterations out of date. This number is  referred to as the asynchronous delay. In~\eqref{fm:async}, the agent reads $x^{k-d_k}$ and commits the update to $x_{i_k}^k$. Here we have assumed  \emph{consistent} reading, i.e., $x^{k-d_k}$ lying in the set $\{x^j\}_{j=1}^k$. This requires implementing a memory lock. Removing the lock can lead to  \emph{inconsistent} reading, which still has convergence guarantees; see~\cite[Section 1.2]{Peng_2015_AROCK} for more details.

\begin{figure} \centering
    \begin{subfigure}[b]{0.45\linewidth}
        \includegraphics[width=60mm]{./figs/syn-simple}
        \caption{sync-parallel computing}
        \label{fig:parallel_a}
    \end{subfigure} %
    \quad
    \begin{subfigure}[b]{0.45\linewidth}
        \includegraphics[width=60mm]{./figs/asyn-simple}
        \caption{async-parallel computing}
        \label{fig:parallel_b}
    \end{subfigure} %
    \caption{ Sync-parallel computing (left) versus async-parallel computing (right). On the left, all the agents must wait at idle (white boxes) until the slowest agent has finished.}
    \label{fig:comp_sync_async}
\end{figure}


Synchronization across all agents means that all agents will wait for  the last (slowest) agent to complete. 
Async-parallel updates eliminate such idle time, spread out memory access and  communication, and thus often run much faster.  However, async-parallel is more difficult to analyze because of the asynchronous delay.


\remove{The sync-parallel update is a special case of the async-parallel update where the number of agents equals $|\II_k|$ and  the asynchronous delay is uniformly zero.}

\textbf{Parallel-update literature.}
Async-parallel methods can be traced back to~\cite{chazan1969chaotic} for systems of linear equations. For function minimization,~\cite{bertsekas1989parallel} introduced an async-parallel gradient projection method. Convergence rates are obtained in~\cite{tseng1991rate-asyn}.
Recently, \cite{bradley2011parallel,richtarik2012parallel} developed parallel randomized methods.

For fixed-point problems, async-parallel methods date back to~\cite{Baudet_1978_asynchronous} in 1978. In  the pre-2010 methods \cite{BMR1997asyn-multisplit,bertsekas1983distributed,Baz200591,el1998flexible} and the review~\cite{Frommer2000201}, each agent updates its own subset of coordinates. Convergence is established under the \emph{$P$-contraction} condition and its variants~\cite{bertsekas1983distributed}. Papers~\cite{Baz200591,Baz1998429} show convergence for async-parallel iterations with simultaneous reading and writing to the same set of components. Unbounded but stochastic delays are considered in~\cite{Strikwerda2002125}.

Recently, random coordinate selection appeared in~\cite{Patrick_2015} for fixed-point problems. The works \cite{nedic2001distributed,recht2011hogwild,liu2013asynchronous,liu2014asynchronous,hsieh2015passcode} introduced async-parallel stochastic methods for function minimization.
For fixed-point problems,~\cite{Peng_2015_AROCK} introduced  async-parallel stochastic methods, as well as several applications.

\subsection{Contributions of This Paper}
The paper systematically discusses the CF properties found in both single and composite operators underlying many interesting applications. We introduce approaches to recognize CF operators and develop coordinate-update algorithms based on them.
We provide a variety of applications to illustrate our approaches.
In particular, we obtain new coordinate-update algorithms for image deblurring, portfolio optimization, second-order cone programming, as well as matrix decomposition. Our analysis also provides guidance to the implementation of coordinate-update algorithms by specifying how to compute certain operators and maintain certain quantities in memory. We also provide numerical results to illustrate the efficiency of the proposed coordinate update algorithms.

This paper does \emph{not} focus on the convergence perspective of coordinate update algorithms, though a convergence proof is provided in the appendix for a new primal-dual coordinate update algorithm. In general, in fixed-point algorithms, the iterate convergence is ensured by the monotonic decrease of the distance between the iterates and the solution set, while in minimization problems, the objective value convergence is ensured by the monotonic decrease of a certain energy function. The reader is referred to the existing literature for details.


The structural properties of operators discussed in this paper are irrelevant to  the convergence-related properties such as nonexpansiveness (for an operator) or convexity (for a set or function). Hence, the algorithms developed can be still applied to nonconvex problems.



\section{Coordinate Friendly Operators}\label{sec:cuf}

\subsection{Notation}
For convenience, we do not distinguish \emph{a coordinate} from \emph{a block of coordinates} throughout this paper. We assume our variable $x$ consists of $m$ coordinates: $$x = (x_1, \ldots, x_m) \in\HH := \HH_1 \times \cdots \times \HH_m\quad\mbox{and}\quad x_i\in\HH_i,~ i=1,\ldots,m.$$ For simplicity, we assume that $\HH_1,\ldots,\HH_m$ are finite-dimensional real Hilbert spaces, though most results hold for general Hilbert spaces.
A function maps from $\HH$ to $\RR$, the set of real numbers, and an operator maps from $\HH$ to $\GG$, where the definition of $\GG$ depends on the context. \cut{Again, we define $[m]$ as the index set $\{1, \ldots, m\}$.}\cut{The operator $\cT$ and those operators in the splitting methods in \S\ref{sec:splitting} map from $\HH$ to $\HH$.}

Our discussion often involves two points $x,x^+\in\HH$ that \emph{differ over one coordinate}: there exists an index $i\in [m]$ and a point $\delta\in\HH$ supported on $\HH_i$, such that
\beq\label{singleupdate}
x^+ = x+\delta.
\eeq
Note that $x^+_j=x_j$ for all $j\not=i$. Hence, $x^+=(x_1,\ldots,x_i+\delta_i,\ldots,x_m)$.
\begin{definition}[number of operations]
We let $\nops{a}{b}$ denote \emph{the number of basic operations} that it takes to compute the quantity $b$ from the  input $a$.
\end{definition}
For example, $\nops{x}{(\cT x)_i}$ denotes the number of operations to compute the $i$th component of $\cT x$ given $x$. We explore the possibility to compute  $(\cT x)_i$ with much fewer operations than what is needed to first compute $\cT x$ and  then take its $i$th component. 

\subsection{Single Coordinate Friendly Operators}
This subsection studies a few classes of CF operators and then formally defines the CF operator. We motivate the first class through an example.

In the example below,  we let $A_{i,:}$ and $A_{:,j}$ be the $i$th row and $j$th column of a matrix $A$, respectively. Let $A^\top$ be the transpose of $A$ and $A^\top_{i,:}$ be $(A^\top)_{i,:}$, i.e., the $i$th row of the transpose of $A$.

\begin{example}[least squares I]\label{ex:lsq1}
Consider the least squares problem
\begin{equation}
\Min_{x} f(x):=\frac{1}{2} \|A x - b\|^2,\label{lsq}
\end{equation}
where $A \in \RR^{p \times m}$ and $b \in \RR^p$. In this example, assume that $m=\Theta(p)$, namely, $m$ and $p$ are of the same order.  We compare the full update of gradient descent to its coordinate update.\footnote{Although gradient descent is seldom used to solve least squares, it often appears as a part in first-order algorithms for problems involving a least squares term.}
The full update is referred to as the iteration
$x^{k+1}= \cT x^k $ where $\cT$ is given by
\begin{equation}\label{eq:LSfull}
\cT x:=x-\eta\nabla f(x)=x-\eta A^\top Ax + \eta A^\top b.
\end{equation}
Assuming that $ A^\top A$ and $ A^\top b$ are already computed, we have $\nops{x}{\cT x}=O(m^2)$. The coordinate update at the $k$th iteration performs
$$x_{i_k}^{k+1}=(\cT x^k)_{i_k} =x^k_{i_k}-\eta\nabla_{i_k} f(x^k),$$
and $x_j^{k+1}=x_j^{k},\forall j\neq i_k$, where $i_k$ is some selected coordinate.

Since for all $i$, $\nabla_i f(x^k)=\left(A^\top (Ax-b)\right)_{i}=(A^\top A)_{i,:}\cdot x-(A^\top b)_{i}$,
we have $\nops{x}{(\cT x)_i }=O(m)$ and thus $\nops{x}{(\cT x)_i }=O(\frac{1}{m}\nops{x}{\cT x})$. Therefore, the coordinate gradient descent is computationally worthy.
\end{example}
The operator $\cT$ in the above example is a special \emph{Type-I CF} operator.
\begin{definition}[Type-I CF]
For an operator $\cT: \HH\to\HH$, let $\nops{x}{(\cT x)_i}$ be the number of operations for computing the $i$th coordinate of $\cT x$ given $x$ and $\nops{x}{\cT x}$ the number of operations for computing $\cT x$ given $x$.
We say $\cT$ is \emph{Type-I CF} (denoted as $\cF_1$) if for any $x\in\HH$ and $i\in [m]$, it holds
$$\nops{x}{(\cT x)_i}= O\bigg(\frac{1}{m}\nops{x}{\cT x}\bigg).$$
\end{definition}
\begin{example}[least squares II]\label{ex:lsq2}
We can implement the coordinate update in Example~\ref{ex:lsq1} in a different manner by maintaining the result $\cT x^k$ in the memory. This approach works when $m=\Theta(p)$ or $p\gg m$. The full update~\eqref{eq:LSfull} is unchanged.
At each coordinate update, from the maintained quantity $\cT x^k$, we immediately obtain $x_{i_k}^{k+1}=(\cT x^k )_{i_k}$. But we need to update $\cT x^k$ to $\cT x^{k+1}$.
Since $x^{k+1}$ and $x^k$  differ only over  the coordinate $i_k$, this update can be computed as
$$\cT x^{k+1}=\cT x^k+x^{k+1}-x^k-\eta(x_{i_k}^{k+1}-x_{i_k}^k)(A^\top A)_{:,i_k} , $$
which is a scalar-vector multiplication followed by vector addition, taking only $O(m)$ operations. Computing $\cT x^{k+1}$ from scratch involves  a matrix-vector multiplication, taking $O(\nops{x}{\cT (x)})=O(m^2)$ operations.
Therefore,
$$\nops{\{ x^k,\cT x^k, x^{k+1}\}}{ \cT x^{k+1}}= O\bigg(\frac{1}{m}\DIFdelbegin  \nops{x^{k+1}}{\cT  x^{k+1}} \bigg).$$
\end{example}
The operator $\cT$ in the above example is a special \emph{Type-II CF} operator.

\begin{definition}[Type-II CF]
An operator $\cT$ is called \emph{Type-II CF} (denoted as $\cF_2$) if, for any $i,x$ and $x^+:=\big(x_1,\ldots,(\cT x)_i,\ldots,x_m\big)$, the following holds
\begin{equation}\label{op-cuf2} \nops{\{ x,\cT x, x^+\}}{ \cT x^+}= O\bigg(\frac{1}{m}\nops{x^+}{\cT x^+}\bigg).
\end{equation}
\end{definition}
The next example illustrates an efficient coordinate update by maintaining certain quantity other than $\cT x$.
\begin{example}[least squares III]\label{ex:lsq3}
For the case $p\ll m$, we should avoid pre-computing the relative large matrix $A^\top A$, and it is cheaper to compute $A^\top(Ax)$ than $(A^\top A)x$. Therefore, we change the implementations of both the full and coordinate updates in Example \ref{ex:lsq1}. In particular, the full update
$$x^{k+1}=\cT x^k =x^k-\eta\nabla f(x^k)=x^k-\eta A^\top (Ax^k-b),$$
pre-multiplies $x^k$ by $A$ and then $A^\top$. Hence,
$\nops{x^k}{\cT(x^k)}=O(mp)$.

We change the coordinate update to maintain the intermediate quantity $Ax^k$. In the first step, the coordinate update computes
$$(\cT x^k)_{i_k}=x^k_{i_k}-\eta (A^\top (Ax^k)-A^\top b)_{i_k},$$
by pre-multiplying $Ax^k$ by $A^{\top}_{i_k,:}$. Then, the second step updates $Ax^k$ to $Ax^{k+1}$ by adding $(x^{k+1}_{i_k}-x^k_{i_k}) A_{:,i_k}$ to $A x^k$. Both steps take $O(p)$ operations, so
\begin{displaymath}{\nops{\{x^k,Ax^k\}}{\{x^{k+1},Ax^{k+1}\}}=O(p)=O\left(\frac{1}{m}\nops{x^k}{\cT x^k}\right)}.\end{displaymath}
\end{example}
Combining Type-I and Type-II CF operators with the last example, we arrive at the following CF definition.

\begin{definition}[CF operator]
We say that an operator $\cT:\HH\to\HH$ is \emph{CF} if, for any $i,x$ and $x^+:=\big(x_1,\ldots,(\cT x)_i,\ldots,x_m\big)$,
the following holds
\begin{equation}\label{op-cuf} \nops{\{x,\cM(x)\}}{\{x^+,\cM(x^+)\}}= O\bigg(\frac{1}{m}\nops{x}{\cT x}\bigg),
\end{equation}
where $\cM(x)$ is some quantity maintained in the memory to facilitate each coordinate update and refreshed to $\cM(x^+)$. $\cM(x)$ can be empty, i.e., except $x$, no other varying quantity is maintained.
\end{definition}

\cut{\begin{definition}[CF operator]
We say that an operator $\cT:\HH\to\GG$ is \emph{coordinate-update (CU) friendly} if for a generic $\eta\in \HH$, $\forall i\in\{1,\ldots,\dim\GG\}$ and $\forall j\in\{1,\ldots,\dim\HH\}$:
\begin{equation}\label{op-cuf}\nops{\{x, \eta_j, \cT x\} }{(\cT (x+\eta_j e_j))_i}\le O\bigg(\frac{1}{\dim \GG}\nops{\{x,\eta,\cT x\}}{\cT (x+\eta)}\bigg).
\end{equation}
(Maintaining quantities other than $x$ and $\cT x$ in  memory is allowed.)
The \emph{set} of CF operators is denoted by  $\cF$. \commzp{$e_j$ is not defined.}
\end{definition}
}

The left-hand side of~\eqref{op-cuf} measures the cost of performing one coordinate update (including the cost of updating $\cM(x)$ to $\cM(x^+)$)  while the right-hand side measures the average per-coordinate cost of updating all the coordinates together. When~\eqref{op-cuf} holds, $\cT$ is amenable to coordinate updates.

\cut{\begin{example}Consider for $a,x\in\RR^m$, $$f(x):=\frac{1}{2}\big(\max(0,1-\beta a^\top x)\big)^2,$$ which is the squared hinge loss function. Consider the operator
\beq\label{qhlossT}
\cT x:=\nabla f(x)=-\beta \max(0,1-\beta a^\top x) a.
\eeq
Let us maintain $\cM(x)=a^\top x$. For arbitrary $x$ and $i$,  let $$x^+_i:=(\cT x)_i=-\beta \max(0,1-\beta a^\top x) a_i$$ and $x^+_j:=x_j,\,\forall j\neq i$. Then, computing $x^+_i$ from $x$ and $a^\top x$ takes $O(1)$ (as $a^\top x$ is maintained), and computing $a^\top x^+$ from $x^+_i-x_i$ and $a^\top x$ costs $O(1)$. Formally, we have
\begin{align*}
&\nops{\{x,a^\top x\}}{\{x^+,a^\top x^+\}}\\
=&\nops{\{x,a^\top x\}}{x^+}+\nops{\{a^\top x,x^+_i-x_i\}}{a^\top x^+}\\
=&O(1)+O(1)=O(1).
\end{align*}
On the other hand, $\nops{x}{\cT x}=O(m)$. Therefore,~the inequality~\eqref{op-cuf} holds, and $\cT$ defined in~\eqref{qhlossT} is CF.
\end{example}
}
%
%
%
%
By definition, a Type-I CF operator $\cT$ is CF without maintaining any quantity, i.e., $\cM(x)=\emptyset$.

A Type-II CF operator $\cT$ satisfies~\eqref{op-cuf} with $\cM(x)= \cT x$, so it is also CF. Indeed, given any $x$ and $i$, we can compute $x^+$ by immediately letting $x^+_i=(\cT x)_i$ (at $O(1)$ cost) and keeping $x^+_{j}=x_{j},\,\forall j\neq i$; then, by~\eqref{op-cuf2}, we update $\cT x$ to $\cT x^+$ at a low cost. Formally, letting $\cM(x)= \cT x$, 
\begin{align*}
&\nops{\{x, \cM(x)\}}{\{x^+,\cM(x^+)\}}\\
\le &\nops{\{x, \cT x\}}{x^+} + \nops{\{x, \cT x, x^+\}}{\cT x^+}\\
\overset{\eqref{op-cuf2}}= & O(1) +O\bigg(\frac{1}{m}\nops{x^+}{\cT x^+}\bigg)\\
= & O\bigg(\frac{1}{m}\nops{x}{\cT x}\bigg).
\end{align*} 

%
\cut{ let $a_i$ be the $i$th row of $A$. We have $\nops{x}{\cT(x)}=\nops{x}{Ax+b}=m^2+m$, and  $\nops{x}{(\cT }$}
\DIFdelbegin 

In general, the set of CF operators is much larger than the union of Type-I and Type-II CF operators.

Another important subclass of CF operators are operators $\cT:\HH\to\HH$ where   $(\cT x)_i$ only depends on one, or a few, entries among $x_1,\ldots,x_m$.\cut{, obviously $(\cT x)_i$ is relatively easy to compute. Therefore, they belong to $\cF$.} Based on how many input coordinates they depend on, we partition them into three subclasses.
 \DIFdelbegin 

\DIFdelend \begin{definition}[separable operator]\label{def:sep-op} Consider $\mathfrak{T}:=\{\cT ~|~\cT:\HH\to\HH\}$. We have the partition $\mathfrak{T}=\cC_1\cup\cC_2\cup\cC_3$, where
\cut{We divide the operators into three categories base on the dependency of other coordinates when updating one coordinate.}
\begin{itemize}
\item \emph{separable operator:} $\cT\in\cC_1$ if, for any index $i$, there exists $\cT_i: \HH_i \rightarrow \HH_i$ such that $(\cT x)_i = \cT_i x_i$, that is,   $(\cT x)_i$ only depends on $x_i$.
\item \emph{nearly-separable operator:} $\cT\in \cC_2$ if, for any index $i$, there exists $\cT_i$ and index set $\mathbb{I}_i$ such that $(\cT x)_i = \cT_i(\{x_j\}_{j\in \mathbb{I}_i})$ with $|\mathbb{I}_i| \ll m$, that is, each $(\cT x)_i$ depends on a few  coordinates of $x$.
\item \emph{non-separable operator:} $\cC_3:=\mathfrak{T}\setminus(\cC_1\cup \cC_2)$. If $\cT\in\cC_3$, there exists some  $i$ such that $(\cT x)_i$ depends on many coordinates of $x$.
\end{itemize}
\end{definition}

\cut{For operators in $\cC_1$, the evaluation of $(\cT x)_i$ only involves coordinates $x_i$ and small operator $\cT_i$.  Thus, all the coordinates and the $\cT_i$s are independent with each other, and this is ideal case for coordinate update. For operators in $\cC_2$, the evaluation of $(\cT x)_i$ needs knowledge of some components of $x$.}

Throughout the paper, we assume the coordinate update of a (nearly-) separable operator costs roughly the same for all coordinates. Under this assumption, separable operators are both Type-I CF and Type-II CF, and nearly-separable operators are Type-I CF.\footnote{\label{note1}Not all nearly-separable operators are Type-II CF. Indeed, consider a sparse matrix $A\in \RR^{m\times m}$ whose  non-zero entries are only located  in the last column. Let $\cT x=Ax$ and $x^+=x+\delta_m$. As $x^+$ and $x$ differ over the last entry, $\cT x^+=\cT x+(x^+_m-x_m)A_{:,m}$ takes $m$ operations. Therefore, we have $\nops{\{x,\cT x,x^+\}}{\cT  x^+}=O(m)$. Since $\cT x^+=x^+_mA_{:,m}$ takes $m$ operations, we also have $\nops{x^+}{\cT x^+}=O(m)$. Therefore,~\eqref{op-cuf2} is violated, and there is no benefit  from maintaining $\cT x$.}  

\cut{There are CF operators that are not Type-I . For example, let $f(x)=\sum_{i=1}^m \log [1+\exp(-b_i a_i^\top x)]$. The gradient operator $\nabla f$ is not Type-I CF, but it is CF by caching $a_i^\top x,\, \forall i$.}

\cut{Next, we defined \emph{easy-to-update} operators. While they are not CF,  $\cT x$ can  be easily updated after the input $x$ is changed on one or a few of its coordinates. Therefore, by maintaining $\cT x$  in memory, they are also amenable to coordinate update computing. 

\begin{definition}[Easy-to-update]
We say an operator $\cT:\HH\to\GG$ is  \emph{easy-to-update} if, for any $x\in \HH$ and  $c\in\RR$, it holds that $\forall i=1,\ldots,\dim\HH$: $$\nops{\{x,\cT x, c\}}{\cT(x+c e_i)}\le O\left(\frac{1}{\dim\HH}\big(\nops{\{x, c\}}{x+c e_i} +\nops{x+c e_i}{\cT (x+c e_i)}\big)\right).$$
The \emph{set} of {easy-to-update} operators is denoted by $\cE$.
\end{definition}

\cut{A (nearly-) separable operator $\cT$ (i.e., in $\cC_1\cup\cC_2$) is easy-to-coordinate-update if $\cT x$ is maintained in the memory. }

In coordinate-update algorithms, easy-to-update operators \emph{can be treated as} a subclass of CF operators. Seemingly, an easy-to-update operator is not CF since it is not necessarily cheaper to compute $(\cT x)_i$ than the entire $\cT  x$. However, computing the entire $\cT  x$ and changing the entire $x$ add a significantly higher cost to \emph{the next iteration} than computing just $(\cT x)_i$, changing just $x_i$, and taking advantage of the easy-to-update property.
}

%

\cut{
\begin{definition}[Easy-to-update]
We say an operator $\cT:\HH\to\GG$ is  \emph{easy-to-update} if, for any $x$ and  $\delta=\sum_{i\in\II}\eta_i e_i$ of arbitrary coefficient $\eta_i\in\RR$ and a small index set $\II$ ($|\II|\ll m$), it holds that  $$\nops{\{x,\cT x,(\eta_i)_{i\in\II}\}}{\cT(\tilde x)}\ll\nops{\{x,(\eta_i)_{i\in\II}\}}{\tilde x} +\nops{\tilde x}{\cT\tilde x},$$
where $\tilde x=x+\delta$. The \emph{set} of {easy-to-update} operators is denoted by $\cE$.
\end{definition}
}

 \cut{When one or a few coordinates of $x$ are changed, giving the new point $\tilde x$, we compute $(\cT \tilde x)_i$ by refreshing $\cT x$ at a low cost. It would be much more expensive to first form $\tilde x$ and then compute $T\tilde x$.}
\subsection{Examples of CF Operators}\label{sec:exp-cuf}
In this subsection, we give  examples of CF operators arising in different areas including linear algebra, optimization, and machine learning.
\begin{example}[(block) diagonal matrix]\label{exp:diagmat}
Consider the diagonal matrix
$$A = \begin{bmatrix}a_{1,1} & ~ & 0 \\  & \ddots&  \\ 0& ~& ~a_{m,m}\end{bmatrix}\in\RR^{m\times m}.$$
Clearly $\cT:x\mapsto Ax$ is separable.
\end{example}
\begin{example}[gradient and proximal maps of a separable function]
Consider a \emph{separable} function
$$f(x) = \sum_{i=1}^m f_i(x_i).$$
Then, both $\nabla f$ and $\prox_{\gamma f}$ are separable, in particular,
$$(\nabla f(x))_i = \nabla f_i (x_i) \quad \mbox{and}\quad (\prox_{\gamma f}(x))_i= \prox_{\gamma f_i} (x_i).$$\cut{$$\nabla f(x) = \begin{bmatrix} \nabla f_1 (x_1) \\ \vdots \\ \nabla f_m(x_m) \end{bmatrix} \quad \mbox{and}\quad \prox_{\gamma f}(x) = \begin{bmatrix}  \prox_{\gamma f_1} (x_1) \\ \vdots \\  \prox_{\gamma f_m}(x_m) \end{bmatrix}.$$}
Here, $\prox_{\gamma f} (x)$ ($\gamma >0$) is the \emph{proximal operator} that we  define in Definition~\ref{def-prox-map} in Appendix~\ref{sec:op-concept}. 
\end{example}

\begin{example}[projection to box constraints]\label{exp:proj-box}
Consider the ``box" set $B:=\{x:a_i\leq x_i\leq b_i,~i \in [m]\}\subset\RR^m$. Then, the projection operator $\prj_{B}$ is separable. Indeed,
$$\big(\prj_{B}(x)\big)_i=\max(b_i,\,\min(a_i,\, x_i)).$$
\end{example}

\begin{example}[sparse matrices] If every row of the matrix $A\in\RR^{m\times m}$ is sparse,   $\cT :x\mapsto Ax$ is nearly-separable.

Examples of sparse matrices arise from various finite difference schemes for differential equations, problems defined on sparse graphs. When most pairs of a set of random variables  are conditionally independent, their inverse covariance matrix is sparse.
\end{example}

\begin{example}[sum of sparsely supported functions]
Let $E$ be a class of index sets and every $e\in E$ be a small subset of $[m]$, $|e|\ll m$. In addition $\#\{e:i\in e\}\ll \#\{e\}$ for all $i\in [m]$.
Let $x_e:=(x_i)_{i\in e}$, and
$$f(x) = \sum_{e \in E} f_e (x_e).$$
The gradient map $\nabla f$ is nearly-separable.

An application of this example arises in wireless communication over  a graph of $m$ nodes. Let each $x_i$ be the spectrum assignment to node $i$, each $e$ be a neighborhood of nodes,  and each   $f_e$ be a utility function. The input of $f_e$ is $x_e$ since the utility depends on the spectra assignments in the neighborhood.

In machine learning, if each observation only involves a few features, then each function of the optimization objective will depend on a  small number of components of $x$. This is the case when graphical models are used~\cite{rue2005gaussian,bengio2006label}. \cut{The gradient of $f(x)$ will also only depend on a few coordinates of $x$. \commwy{could you be specific in the machine learning example?}}
\end{example}

\cut{ 
\begin{example}[matrix-vector multiplication]\label{mtxexam}
With a (dense) matrix $A\in\RR^{m\times m}$, the operator $\cT:x\mapsto A x$ is non-separable yet CF. Indeed, let $a_i$ be the $i$th row of $A$. Then, we can compare $\nops{x}{\cT(x)}=\nops{x}{Ax}=m^2$, and  $\nops{x}{(\cT x)_i}=\nops{x}{a_i^\top  x}=m$ is much smaller.
\end{example}
\begin{example}[affine transformation preserves $\cC_1$]
Let $\cT\in\cC_1$, $b \in \HH$. Then $\cT':x\mapsto\cT x + b$ belongs to  $\cC_1$.
\end{example}
}
\cut{\begin{example}[projection to the Euclidean ball]\label{ex:projball}
Let $B_2=\{x\in\RR^m:\|x\|_2\leq r\}$ and $$\cT x:=\prj_{B_2}(x).$$ By definition, 
$\cT(x) =\xi x,~\mbox{where}~\xi=\min\left\{1,\frac{r}{\|x\|_2}\right\}.$
Since $\cT x$ depends on all the entries of $x$,  $\cT$ is non-separable.

Nonetheless, the norm map $\cT':x\mapsto \|x\|_2$ is easy-to-maintain. Indeed, given $x$, we can maintain $\cT'(x)=\|x\|_2$. If $x_i$ is updated,  letting $e_i$ be the $i$th standard basis and writing $\bar{x}=x+\eta e_i$, it follows $\|\bar{x}\|_2=\sqrt{\|x\|_2^2+2\eta x_i+\eta^2}$. Therefore,   $\nops{\{x,\|x\|_2,\eta\}}{\|\bar{x}\|_2}=O(1)$ while $\nops{\bar{x}}{\|\bar{x}\|_2}=O(m)$.

\end{example}
}


\cut{\begin{example}[$\cC_2 + \cC_2= \cC_3$]
Let $A_1,A_2\in\RR^{m\times m}$ be sparse matrices. If $A_1+A_2$ is a dense matrix, then $\cT x= Ax=A_1x+A_2x$ belongs to $\cC_3$ though $\cT_1 x= A_1x$ and $\cT_2=A_2x$ both belong to $\cC_2$.
\end{example}
}


%
%
\cut{
Based on the categories of an operator and the examples, we know that all operators in $\cC_1$ are CF, operators in $\cC_2$ with $c\ll m$ is also CF. However, some operator in $\cC_3$ are CF, while others are not. There are other ways to determine whether a combination of several operators is CF or not. }

\cut{\begin{remark}
If the complexity of evaluating $\cT_2 x$ is smaller or equals to the complexity of evaluating $\cT_{1,i} x$, then whether the operators $\cT_1+\cT_2$ and $\cT_1\circ\cT_2$ is CF or not depends on whether $\cT_2$ is CF or not.
\end{remark}}


%
%
%

%



\cut{When we apply coordinate update algorithms, we may choose to store some intermediate variables which are easy to update after some coordinates are updated.}

\cut{
\begin{definition} Let $\mathfrak{M}[x\to \cT(x)]$ denote the number of  operations of computing $\cT(x)$ given $x$. Let $\mathfrak{M}[y\to \tilde y]$ denote the number of  operations of computing $\tilde y$ in place given  $y$.
\end{definition}}

\cut{
\begin{remark}By  definition, a separable operator is easy to update (and no quantity needs to be maintained), but not all nearly-separable operators easy to update. For example, consider a sparse matrix $A\in R^{m\times m}$ where all of its non-zero entries are in the last column. Let $\cT x=Ax$ and, given a point $x$ nd $\tilde x=x+\eta_me_m$, compute $(T(x+\eta_me_m))_m$. We have $\nops{\{x,\cT x,\eta_m\}}{\cT \tilde x}=O(m)$ and $\nops{\{x,\eta_m\}}{\tilde x} +\nops{\tilde x}{\cT(\tilde x)}=O(1)+O(m)=O(m)$. There is not benefit  from maintaining $\cT x$.
\end{remark}
}
\begin{example}[squared hinge loss function]\label{ex:qhloss}
Consider for $a,x\in\RR^m$, $$f(x):=\frac{1}{2}\big(\max(0,1-\beta a^\top x)\big)^2,$$ which is known as the squared hinge loss function. Consider the operator
\beq\label{qhlossT}
\cT x:=\nabla f(x)=-\beta \max(0,1-\beta a^\top x) a.
\eeq
Let us maintain $\cM(x)=a^\top x$. For arbitrary $x$ and $i$,  let $$x^+_i:=(\cT x)_i=-\beta \max(0,1-\beta a^\top x) a_i$$ and $x^+_j:=x_j,\,\forall j\neq i$. Then, computing $x^+_i$ from $x$ and $a^\top x$ takes $O(1)$ (as $a^\top x$ is maintained), and computing $a^\top x^+$ from $x^+_i-x_i$ and $a^\top x$ costs $O(1)$. Formally, we have
\begin{align*}
&\nops{\{x,a^\top x\}}{\{x^+,a^\top x^+\}}\\
\le&\nops{\{x,a^\top x\}}{x^+}+\nops{\{a^\top x,x^+_i-x_i\}}{a^\top x^+}\\
=&O(1)+O(1)=O(1).
\end{align*}
On the other hand, $\nops{x}{\cT x}=O(m)$. Therefore,~\eqref{op-cuf} holds, and $\cT$ defined in~\eqref{qhlossT} is CF.
\end{example}

\cut{\rev{\subsection{Compare Full Update with Coordinate Update}\label{lsqexperiment}
In this subsection, we compare the efficiency of four different coordinate update schemes (cyclic, cyclic permutation, random, and greedy with Gauss-Southwell rule) with the full gradient descent method for solving the least square problem
$$\Min_{x} \frac{1}{2} \|A x - b\|^2,$$
where $A \in \RR^{p \times m}$ and $b \in \RR^p$. We solve the above problem with the following update scheme
$$x^{k+1} = x^k - \eta_k A^{\top}(A x^k - b),$$
where $\eta_k$ is the step size. This test uses three datasets, which are summarized in Table \ref{tab:ls-data}.
\begin{table}[!hbtp]
\centering
 \begin{tabular}{lrrrr}
  \toprule
     & $p$  & $m$ & $A$ & $b$\\
   \midrule
   Dataset I & 1000 & 1000 & \texttt{diag([1:m])} & \texttt{ones(m, 1)} \\
   Dataset II & 1000 & 500 & \texttt{randn(m, n)} & \texttt{ones(m, 1)} \\
   Dataset III & 1000 & 500 & \texttt{rand(m, n)} & \texttt{ones(m, 1)} \\
   \bottomrule
\end{tabular}
 \caption{Three datasets for the least square problem\label{tab:ls-data}}
\end{table}

For both full gradient descent method, the step size $\eta$ is set to $\frac{2}{\|A\|_2^2}$. For the four coordinate update methods, if coordinate $i$ is selected, then $\eta_k$ is set to $\frac{1}{(A^{\top}A)_{ii}}$. Since all of the methods have same per epoch complexity, so we measure and compare the per epoch progress in terms of objective error. Figure \ref{fig:ls_a} shows that for Dataset I, both cyclic update, cyclic permutation, and greedy update converge to the optimal solution with one epoch. Random coordinate update converges to the optimal solution after $8$ epochs. However, due to the small step size ($\eta_k = 10^{-6}$), the gradient descent algorithm converges very slowly. For the other two datasets, we observe that greedy coordinate update converges faster than the other methods, and random coordinate update and cyclic shuffle coordinate update give consistent better performance than the full gradient descent algorithm.
\begin{figure}[!htbp] \centering
    \begin{subfigure}[b]{0.3\linewidth}
        \includegraphics[width=40mm]{./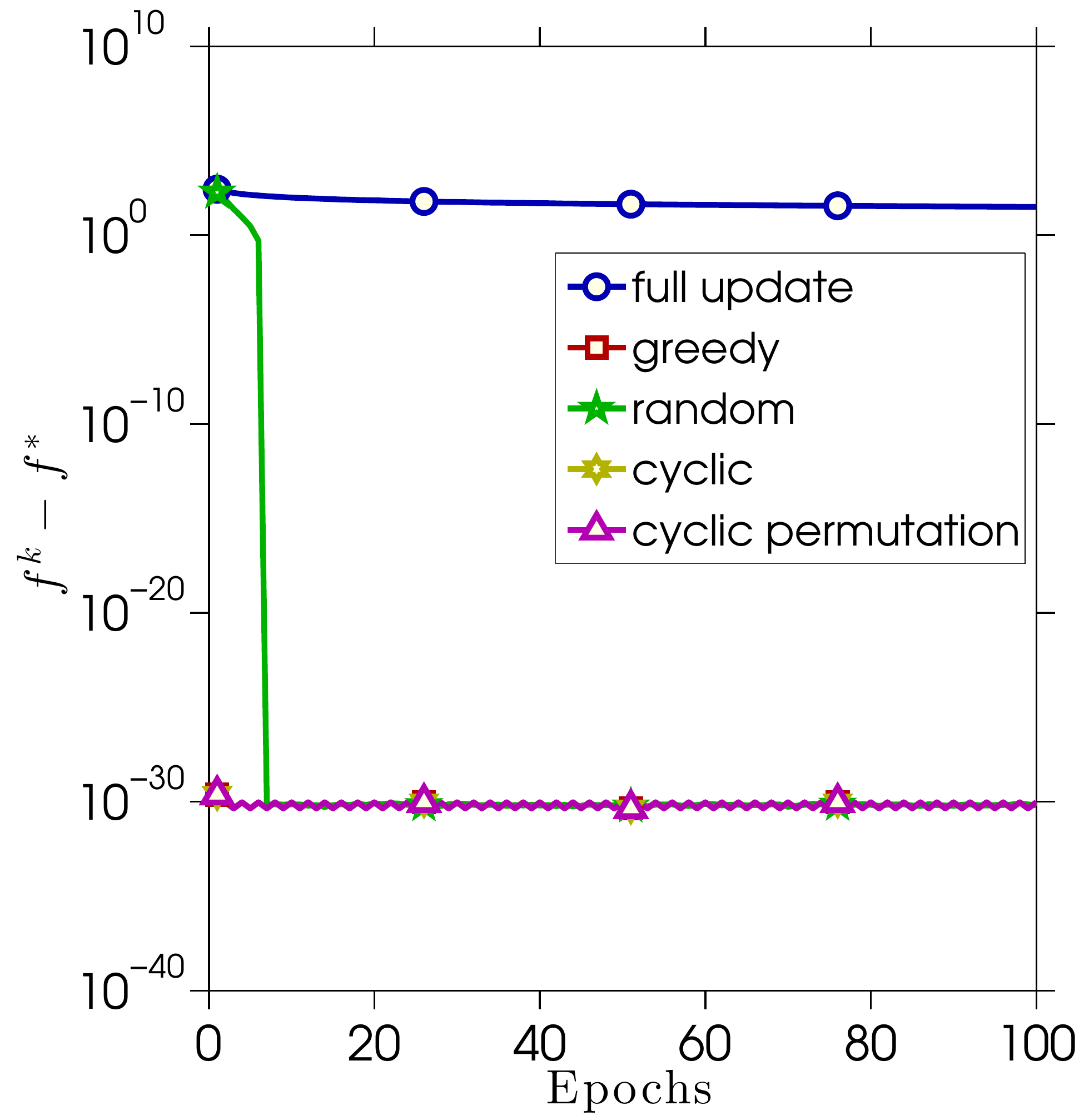}
        \caption{Dataset I}
        \label{fig:ls_a}
    \end{subfigure} %
    \quad
    \begin{subfigure}[b]{0.3\linewidth}
        \includegraphics[width=40mm]{./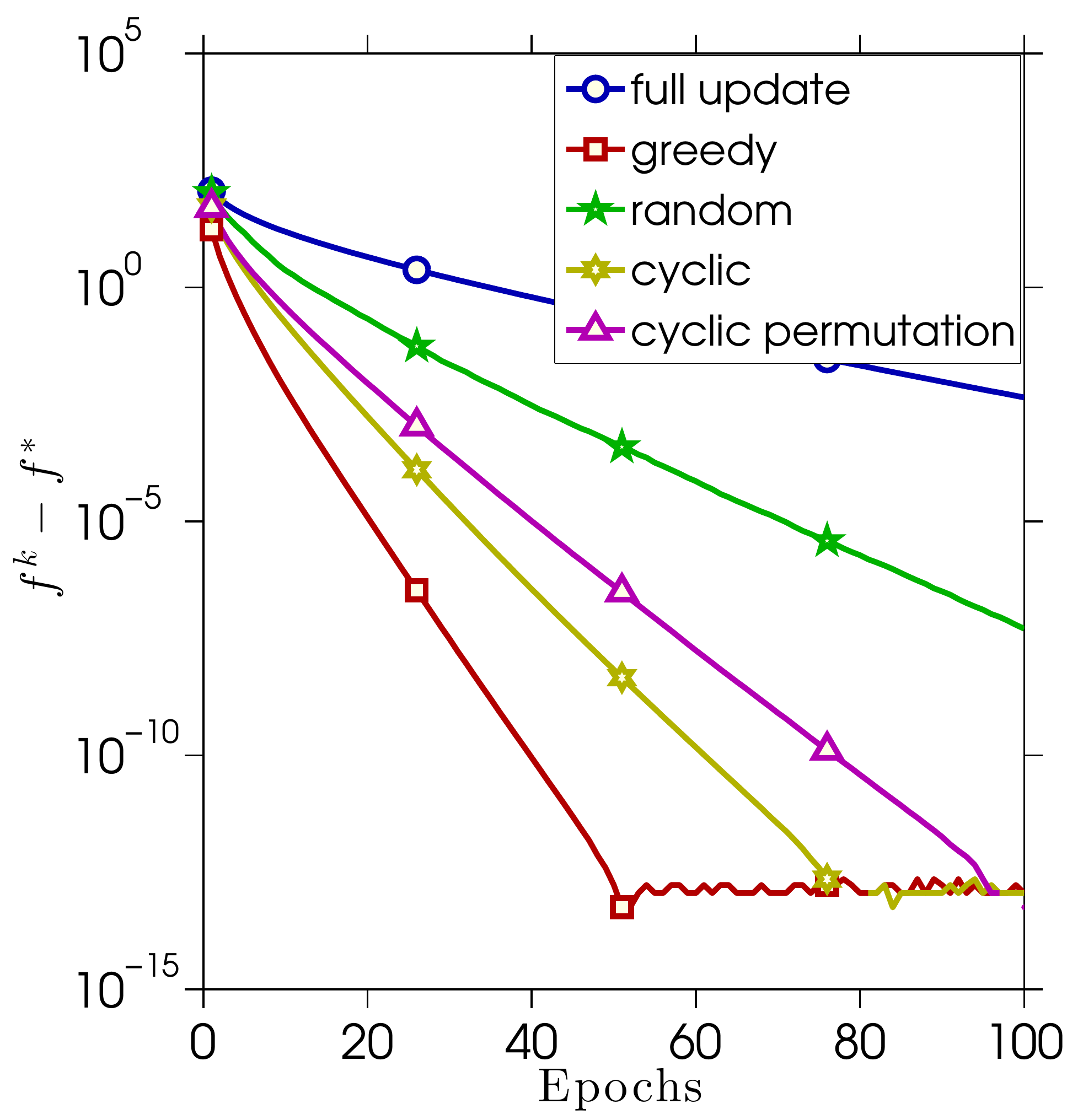}
        \caption{Dataset II}
        \label{fig:ls_b}
    \end{subfigure} %
    \quad
    \begin{subfigure}[b]{0.3\linewidth}
        \includegraphics[width=40mm]{./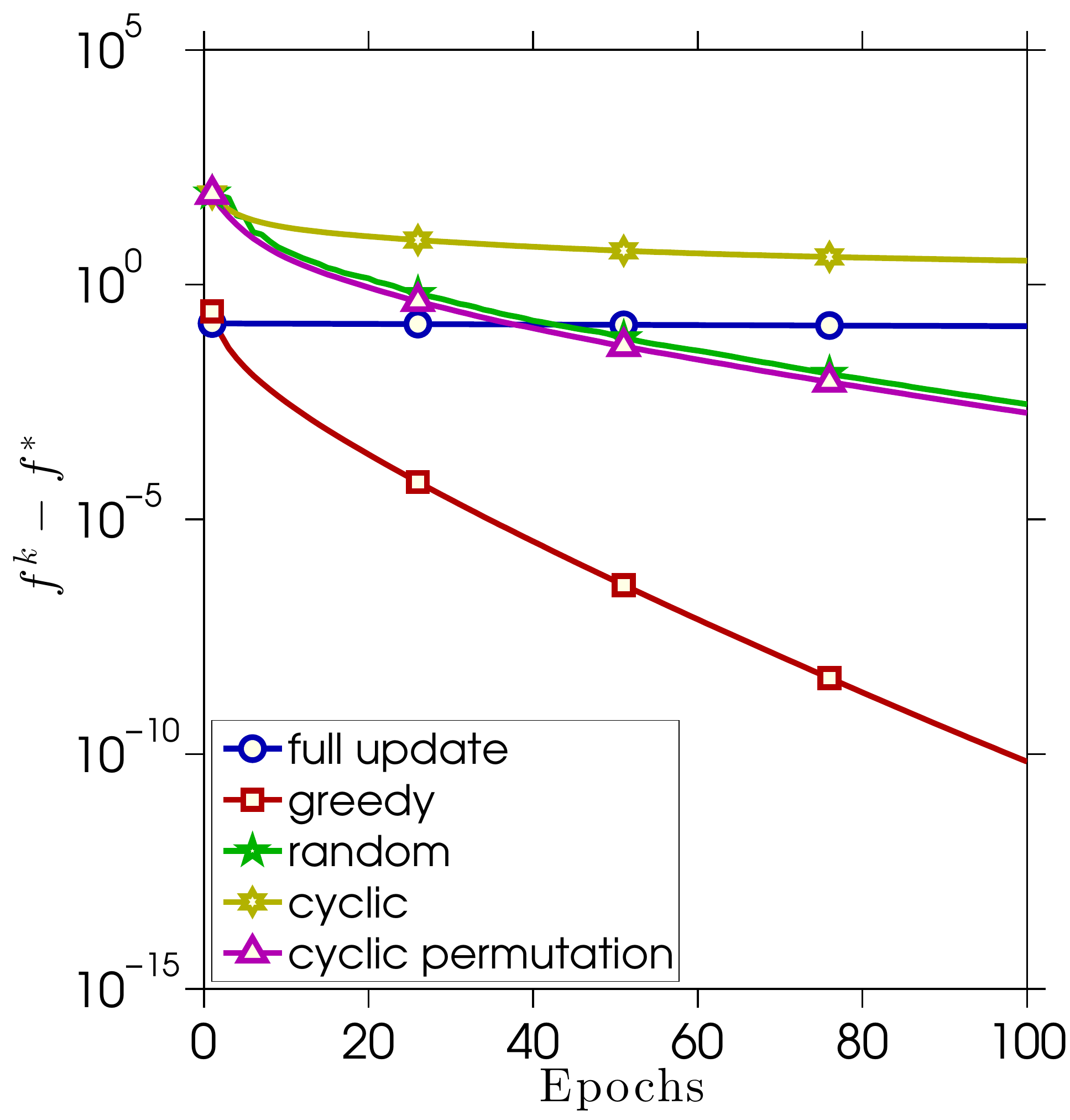}
        \caption{Dataset III}
        \label{fig:ls_c}
    \end{subfigure} %
    \caption{Compare the convergence of four different coordinate update algorithms with full gradient descent algorithm.}
    \label{fig:3s_results}
\end{figure}
}
}

\cut{
\begin{example}[scalar map pre-composing affine function]\label{exp:log-grad} Let $a_j\in \RR^m, b_j\in \RR$ and $\phi_j:\RR\to \RR$ be differentiable functions, $j=1,\ldots,p$. Let $$f(x)=\sum_{j=1}^p \phi_j(a_j^\top x +b_j).$$ Assume that evaluating $\phi'_j$ costs $O(1)$ for all $j$. Then, $\nabla f$ is CF (by maintaining $a_j^\top x+b_j,\,\forall j$), but it is neither Type-I nor Type-II CF if $A=[a_1,\ldots,a_p]^\top$ is dense. Indeed, let $$\cT_1 y:=A^\top y,\quad \cT_2 y := \Diag(\phi_1'(y_1),\ldots,\phi_p'(y_p)), \quad \cT_3 x := Ax+b.$$ Then $\nabla f(x)= \cT_1\circ\cT_2\circ\cT_3 x$. For any $x$ and $i\in[m]$, let $x^+_i=\nabla_i f(x)$ and $x^+_j=x_j,\forall j\neq i$, and let $\cM(x)=\cT_3 x$. We can first compute $\cT_2\circ\cT_3 x$ from $\cT_3 x$ for $O(p)$ operations, then compute $\nabla_i f(x)$ and thus $x^+$ from $\{x, \cT_2\circ\cT_3 x\}$ for $O(p)$, and finally update the maintained $\cT_3 x$ to $\cT_3 x^+$ from $\{x, x^+,\cT_3 x\}$ for another $O(p)$. Formally,
\begin{align*}
&\nops{\{x,\cT_3 x\}}{\{x^+, \cT_3x^+\}}\cr
=& \nops{\cT_3x}{\cT_2\circ\cT_3 x}+\nops {\{x,\cT_2\circ\cT_3 x\}}{x^+}+\nops{\{x,\cT_3 x, x^+\}}{\{\cT_3x^+\}}\cr
=& O(p) + O(p) +O(p)=O(p).\nonumber
\end{align*}
On the other hand, $\nops{x}{\nabla f(x)}=O(pm)$. Therefore
$\nabla f= \cT_1\circ\cT_2\circ\cT_3$ is CF.

Once $p=m$, $\cT_1,\cT_2,\cT_3$ all map from $\RR^m$ to $\RR^m$. Then, it is easy to check that $\cT_1$ is Type-I CF, $\cT_2$ is separable, and $\cT_3$ is Type-II CF. The last one is crucial since not maintaining $\cT_3 x$ would disqualify $\cT$ from CF. Indeed, to obtain $(\cT x)_i$, we must multiple $A_i^\top$ to all the entries of $\cT_2\circ\cT_3 x$, which in turn needs all the entries of $\cT_3 x$, computing which from scratch costs $O(pm)\gg O(p)$.

There are some  rules to preserve Type-I and Type-II CU-friendliness. For example, $\cT_1\circ \cT_2$ is still Type-I CF, and $\cT_2\circ\cT_3$ is still CF but there are counter examples where  $\cT_2\circ\cT_3$  can be neither Type-I or Type-II CF in general. Such properties are important for developing efficient coordinate update algorithms for complicated problems; see the next section.
\end{example}
}


\section{Composite Coordinate Friendly Operators}\label{sec:comp-cuf}
Compositions of two or more operators arise in  algorithms for problems that have composite functions, as well as  algorithms that are derived from operator splitting methods. To update the variable $x^k$ to $x^{k+1}$, two or more operators are sequentially applied, and therefore the structures of all  operators  determine whether the update is CF. This is where CF structures become less trivial but more interesting. This section studies composite CF operators. The exposition leads to the recovery of existing algorithms, as well as powerful new algorithms.

\cut{Operator splitting algorithms decompose awkward combinations of operators into simpler subproblems. They give rise to a lot of efficient algorithms. \rev{Aside from the widely used ADMM and the primal-dual algorithms \cite{chambolle2011first}\cite{condat2013primal}\cite{vu2013splitting}, the forward-backward splitting, the forward-backward-forward splitting, the Douglas-Rachford splitting, the forward-Douglas-Rachford splitting also find numerous applications \cite{daubechies2003iterative}\cite{combettes2005signal}\cite{o2014primal}\cite{briceno2015FDRS} in machine learning, signal processing and imaging. Morover, the development of operator splitting schemes can produce more powerful algorithms \cite{davis2015three}. Combining operator splitting and coordinate updating will give us algorithms enjoying benefits from both sides. However, naively applying coordinate update to existing algorithms may results in divergence or wrong answers.
\begin{example}[three block ADMM]
The problem:
\begin{equation}
\begin{array}{l}
\underset{x,y,z\in\RR^m}{\textnormal{minimize  }} ~f(x)+g(y)+h(z)\\
\textnormal{subject to } Ax+Bx+Cz=b,
\end{array}\label{3block}
\end{equation}
can be solved by the following ADMM iterations:
\begin{equation}
\left\{
\begin{array}{ll}
x^{k+1}&=\argmin_x f(x)+\frac{\eta}{2}\|Ax+By^k+Cz^k+\frac{s^k}{\eta}-b\|^2,\\
\begin{pmatrix}
y^{k+1}\\
z^{k+1}
\end{pmatrix}&=\argmin_{(y,z)^\top }g(y)+h(z)+\frac{\eta}{2}\|Ax^{k+1}+By+Cz+\frac{s^k}{\eta}-b\|^2,\\
s^{k+1}&=s^k+\eta(Ax^{k+1}+By^{k+1}+Cz^{k+1}-b).
\end{array}\right.
\end{equation}
However, if we apply sequential update to the $(y,z)$ subproblem, we will have the following iterative scheme:
\begin{equation}
\left\{
\begin{array}{l}
x^{k+1}=\argmin_x f(x)+\frac{\eta}{2}\|Ax+By^k+Cz^k+\frac{s^k}{\eta}-b\|^2,\\
y^{k+1}=\argmin_y g(y)+\frac{\eta}{2}\|Ax^{k+1}+By+Cz^k+\frac{s^k}{\eta}-b\|^2,\\
z^{k+1}=\argmin_z h(z)+\frac{\eta}{2}\|Ax^{k+1}+By^{k+1}+Cz+\frac{s^k}{\eta}-b\|^2,\\
s^{k+1}=s^k+\eta(Ax^{k+1}+By^{k+1}+Cz^{k+1}-b).
\end{array}\right.
\end{equation}
It is the direct extension of ADMM to three block, which is proved to be divergent in \citep{chen2014direct}
\end{example}}
This motivates us to study the mechanism to develop coordinate update algorithms based on operator splitting. In order to do this, we study when the combinations of operators is CF in  In \S\ref{sc:comb}. Then, in \S\ref{sec:splitting} review several operator splitting schemes and provide examples. \rev{We point out here that the coordinate update methods we propose, based on operator splitting schemes, all have convergence guarantee, at least for async-parallel (thus also stochastic) update \cite{Peng_2015_AROCK}}

There are many popular operator-splitting based algorithms, e.g., proximal gradient method, ADM, and primal-dual method, which reduce the original problem to simpler subproblems, each corresponding to a part of the objective or constraints. Coordinate updates can be combined with operator splitting to further simplify their subproblems and even offer better parallelism. Most operator splitting algorithms are sequential compositions of two or more operators. This section studies when their compositions are CF operators.

}

\subsection{Combinations of Operators}\label{sc:comb}
We start by an example with numerous applications. It is a generalization of Example~\ref{ex:qhloss}.
\begin{example}[scalar map pre-composing affine function]\label{exp:log-grad} Let $a_j\in \RR^m, b_j\in \RR$, and $\phi_j:\RR\to \RR$ be differentiable functions, $j \in [p]$. Let $$f(x)=\sum_{j=1}^p \phi_j(a_j^\top x +b_j).$$ Assume that evaluating $\phi'_j$ costs $O(1)$ for each $j$. Then, $\nabla f$ is CF. Indeed, let $$\cT_1 y:=A^\top y,\quad \cT_2 y := [\phi_1'(y_1);\ldots;\phi_p'(y_p)], \quad \cT_3 x := Ax+b,$$
where $A=[a_1^\top; a_2^\top; \ldots; a_p^\top]\in \RR^{p\times m}$ and $b=[b_1;b_2;\ldots;b_p]\in\RR^{p\times 1}$. Then we have $\nabla f(x)= \cT_1\circ\cT_2\circ\cT_3 x$. For any $x$ and $i\in[m]$, let $x^+_i=\nabla_i f(x)$ and $x^+_j=x_j,\forall j\neq i$, and let $\cM(x):=\cT_3 x$. We can first compute $\cT_2\circ\cT_3 x$ from $\cT_3 x$ for $O(p)$ operations, then compute $\nabla_i f(x)$ and thus $x^+$ from $\{x, \cT_2\circ\cT_3 x\}$ for  $O(p)$ operations, and finally update the maintained $\cT_3 x$ to $\cT_3 x^+$ from $\{x, x^+,\cT_3 x\}$ for another $O(p)$ operations. Formally,
\begin{align*}
&\nops{\{x,\cT_3 x\}}{\{x^+, \cT_3x^+\}}\cr
\le& \nops{\cT_3x}{\cT_2\circ\cT_3 x}+\nops {\{x,\cT_2\circ\cT_3 x\}}{x^+}+\nops{\{x,\cT_3 x, x^+\}}{\{\cT_3x^+\}}\cr
=& O(p) + O(p) +O(p)=O(p).\nonumber
\end{align*}
Since $\nops{x}{\nabla f(x)}=O(pm)$, therefore
$\nabla f= \cT_1\circ\cT_2\circ\cT_3$ is CF.

If $p=m$, $\cT_1,\cT_2,\cT_3$ all map from $\RR^m$ to $\RR^m$. Then, it is easy to check that $\cT_1$ is Type-I CF, $\cT_2$ is separable, and $\cT_3$ is Type-II CF. The last one is crucial since not maintaining $\cT_3 x$ would disqualify $\cT$ from CF. Indeed, to obtain $(\cT x)_i$, we must multiply $A_i^\top$ to all the entries of $\cT_2\circ\cT_3 x$, which in turn needs all the entries of $\cT_3 x$, computing which from scratch would cost $O(pm)$.

There are general rules to preserve Type-I and Type-II CF. For example, $\cT_1\circ \cT_2$ is still Type-I CF, and $\cT_2\circ\cT_3$ is still CF, but there are counter examples where  $\cT_2\circ\cT_3$  can be neither Type-I nor Type-II CF. Such properties are important for developing efficient coordinate update algorithms for complicated problems; we will formalize them in the following.
\end{example}

The operators $\cT_2$ and $\cT_3$ in the above example are prototypes of \emph{cheap} and \emph{easy-to-maintain} operators from $\HH$ to $\GG$ that arise in operator compositions.
\begin{definition}[cheap operator] For a composite operator $\cT=\cT_1 \circ\cdots \circ \cT_p$, an operator $\cT_i:\HH\to\GG$ is cheap if $\nops{x}{\cT_i x}$ is less than or equal to the number of remaining coordinate-update operations, in order of magnitude.
\end{definition}

\begin{definition}[easy-to-maintain operator]
For a composite operator $\cT=\cT_1 \circ\cdots \circ \cT_p$, 
the operator $\cT_p:\HH\to\GG$ is \emph{easy-to-maintain}, if for any $x,i,x^+$ satisfying~\eqref{singleupdate}, $\nops{\{ x,\cT_p x, x^+\}}{\cT_p x^+}$
is less than or equal to the number of remaining coordinate-update operations, in order of magnitude, \emph{or} belongs to $O(\frac{1}{\dim \GG}\nops{x^+}{\cT x^+})$.
\end{definition}
\cut{\begin{example}[projection to the Euclidean ball]\label{ex:projball}
Let $B_2=\{x\in\RR^m:\|x\|_2\leq r\}$ and $$\cT x:=\prj_{B_2}(x).$$ By definition, 
$\cT(x) =\xi x,~\mbox{where}~\xi=\min\left\{1,\frac{r}{\|x\|_2}\right\}.$
Since $\cT x$ depends on all the entries of $x$,  $\cT$ is non-separable.

Nonetheless, the norm map $\cT':x\mapsto \|x\|_2$ is easy-to-maintain. Indeed, given $x$, we can maintain $\cT'(x)=\|x\|_2$. If $x_i$ is updated,  letting $e_i$ be the $i$th standard basis and writing $\bar{x}=x+\eta e_i$, it follows $\|\bar{x}\|_2=\sqrt{\|x\|_2^2+2\eta x_i+\eta^2}$. Therefore,   $\nops{\{x,\|x\|_2,\eta\}}{\|\bar{x}\|_2}=O(1)$ while $\nops{\bar{x}}{\|\bar{x}\|_2}=O(m)$.

\end{example}
}

\cut{\begin{example}[Projection to Euclidean ball]
Let $\cT \, x = \Proj_{\|x\|_2 \leq r} (x)$, $\tilde x  = x + \sum_{i \in \II} \eta_i e_i$
\begin{equation}
\cT \, (\tilde x) =
\left\{
\begin{array}{ll}
\tilde x &\text{ if } \|\tilde x\| \leq r \\
\frac{r}{\|\tilde x\|} \tilde x &\text{ if } \|\tilde x\| > r,
\end{array}
\right.
\end{equation}
Note that $\|\tilde x\|^2 = \|x\|^2 + 2 \sum_{i \in \II} \langle x_i + \eta_i, \eta_i \rangle$. If $ \|\tilde x\| \leq r$, then $\cT$ is easy to update if and only if $ \|\tilde x\| \leq r$.
\end{example}}

The splitting schemes in \S\ref{sec:splitting} below will be based on $\cT_1+\cT_2$ or $\cT_1\circ\cT_2$, as well as a sequence of such combinations. If $\cT_1$ and $\cT_2$ are both CF, $\cT_1+\cT_2$ remains CF, but $\cT_1\circ\cT_2$ is not necessarily so. This subsection discusses how $\cT_1\circ\cT_2$ inherits the properties from $\cT_1$ and $\cT_2$. Our results are summarized in Tables \ref{table:comp1-op} and \ref{table:comp-op} and explained in detail below.

The combination \cut{$\cT_1+\cT_2$ and }$\cT_1\circ \cT_2$ generally inherits the \emph{weaker} property from $\cT_1$ and $\cT_2$. 

The separability ($\cC_1$) property  is  preserved by composition. If $\cT_1,\ldots,\cT_n$ are separable, then $\cT_1\circ\cdots\circ \cT_n$ is separable.  However, combining  nearly-separable ($\cC_2$) operators  may not yield a nearly-separable operator since\cut{ Even if  $\cT_1,\ldots,\cT_n\in\cC_2$ and each $(\cT_jx)_i$ depends on $c>1$ coordinates of $x$, the composition $(\cT_1\circ\cdots\circ \cT_n x)_i$ can depend on the much more $\min\{c^n,m\}$ components of $x$, as} composition introduces more dependence among the input entries. Therefore, composition of nearly-separable operators can be either nearly-separable or non-separable.


\begin{table}
\begin{center}
\begin{tabular}{c|l|l|l}
\hline
Case & $\cT_1\in$ & $\cT_2\in$ & $~(\cT_1\circ\cT_2)\in ~$\\\hline\hline
1 & $\cC_1$ (separable) & $\cC_1$, $\cC_2$, $\cC_3$ & $\cC_1$, $\cC_2$, $\cC_3$, respectively \\\hline
2 & $\cC_2$ (nearly-sep.)& $\cC_1$, $\cC_3$ & $\cC_2$, $\cC_3$, resp. \\\hline
3 & $\cC_2$ & $\cC_2$ & $\cC_2$ or $\cC_3$, case by case \\\hline
4 & $\cC_3$ (non-sep.) & $\cC_1\cup\cC_2\cup\cC_3$ & $\cC_3$ \\\hline
\end{tabular}\end{center}
\caption{$\cT_1\circ\cT_2$ inherits the weaker separability property from those of $\cT_1$ and $\cT_2$.}\label{table:comp1-op}\end{table}

\begin{table}
\begin{center}
\begin{tabular}{c|l|l|l|l}
\hline
Case & $\cT_1\in$ & $\cT_2\in$ & $~(\cT_1\circ\cT_2)\in ~$& Example\\\hline\hline

5 & $\cC_1\cup\cC_2$ & $\cF$, $\cF_1$\cut{, $\cF_2$} & $\cF$, $\cF_1$\cut{, $\cF_2$}, resp. & Examples~\ref{exp:sp-dens} and~\ref{alg:prox-grad}\\\hline
6 & $\cF$, $\cF_2$ \cut{, $\cF_1$, $\cF_2$} & $\cC_1$ & $\cF$, $\cF_2$\cut{, $\cF_1$, $\cF_2$}, resp.& Example \ref{exp:log-grad}\\\hline
7 & $\cF_1$ & $\cF_2$& $\cF$ &Example \ref{den-den}\\\hline
8 & cheap & $\cF_2$ & $\cF$ & Example \ref{alg:prox-grad}\\\hline
9 & $\cF_1$ & cheap& $\cF_1$ &Examples~\ref{exp:log-grad} and~\ref{alg:prox-grad}\\\hline
\end{tabular}\end{center}
\caption{Summary of how $\cT_1\circ\cT_2$ inherits CF properties from those of $\cT_1$ and $\cT_2$.}\label{table:comp-op}\end{table}

Next, we discuss how $\cT_1\circ \cT_2$ inherits the CF properties from $\cT_1$ and $\cT_2$. For simplicity, we only use matrix-vector multiplication as examples to illustrate the ideas; more interesting examples will be given later.
\begin{itemize}

\item If $\cT_1$ is separable or nearly-separable ($\cC_1\cup\cC_2$), then as long as $\cT_2$ is CF ($\cF$), $\cT_1\circ\cT_2$ remains CF. In addition, if $\cT_2$ is Type-I CF ($\cF_1$), so is $\cT_1\circ\cT_2$.
\begin{example}\label{exp:sp-dens}
Let $A\in\RR^{m\times m}$ be \emph{sparse} and $B\in\RR^{m\times m}$ \emph{dense}. Then $\cT_1 x=Ax$ is nearly-separable and $\cT_2 x=Bx$ is Type-I CF\footnote{For this example, one can of course pre-compute $AB$ and claim that $(\cT_1\circ\cT_2)$ is Type-I CF. Our arguments keep $A$ and $B$ separate and only use the nearly-separability of $\cT_1$ and Type-I CF property of $\cT_2$, so our result holds for any such composition even when $\cT_1$ and $\cT_2$ are nonlinear.}.  For any $i$, let $\II_i$ index the set of nonzeros on the $i$th row of $A$. We first compute $(Bx)_{\II_i}$, which costs $O(|\II_i|m)$, and then $a_{i,\II_i} (Bx)_{\II_i}$, which costs $O(|\II_i|)$, where $a_{i,\II_i}$ is formed by the nonzero entries on the $i$th row of $A$. Assume $O(|\II_i|)=O(1),\,\forall i$. We have, from the above discussion, that $\nops{x}{(\cT_1\circ\cT_2 x)_i}=O(m)$,
while $\nops{x}{\cT_1\circ\cT_2 x}=O(m^2)$. Hence, $\cT_1\circ\cT_2$ is Type-I CF.
\end{example}

\item Assume that $\cT_2$ is separable ($\cC_1$). It is easy to see that if $\cT_1$ is CF ($\cF$)\cut{ ($\cF, \cF_1, \cF_2$)}, then $\cT_1\circ \cT_2$ remains CF\cut{ ($\cF, \cF_1, \cF_2$, respectively)}. In addition if $\cT_1$ is Type-II CF ($\cF_2$), so is $\cT_1\circ\cT_2$; see Example \ref{exp:log-grad}.\cut{ and \ref{alg:prox-grad}.} 

Note that, if $\cT_2$ is nearly-separable, we do not always have CF properties for $\cT_1\circ\cT_2$. This is because $\cT_2 x$ and $\cT_2 x^+$ can be totally different (so updating $\cT_2 x$ is expensive) even if $x$ and $x^+$ only differ  over one coordinate; see the  footnote~\ref{note1} on Page \pageref{note1}. 

\item Assume that $\cT_1$ is Type-I CF ($\cF_1$). If $\cT_2$ is Type-II CF ($\cF_2$), then $\cT_1\circ\cT_2$ is CF ($\cF$).
\begin{example}\label{den-den}
Let $A,B\in\RR^{m\times m}$ be dense. Then $\cT_1 x=Ax$ is Type-I CF and $\cT_2 x=Bx$ Type-II CF (by maintaining $Bx$; see {Example \ref{ex:lsq2}}). For any $x$ and $i$, let $x^+$ satisfy~\eqref{singleupdate}. Maintaining $\cT_2 x$, we can compute $(\cT_1\circ\cT_2 x)_j$ for $O(m)$ operations for any $j$ and update $\cT_2 x^+$ for $O(m)$ operations. On the other hand, computing $\cT_1\circ\cT_2 x^+$ without maintaining $\cT_2 x$ takes $O(m^2)$ operations.
\end{example}


\item Assume that one of $\cT_1$ and $\cT_2$ is cheap\cut{\footnote{By ``cheap'', we mean their computational complexity differ at least one order. For example, if computing $\cT_1 x$ costs $O(m)$ and $\cT_2 x$ costs $O(m^2)$, then $\cT_1$ is cheap.} compared to the other one}. If $\cT_2$ is cheap, then as long as $\cT_1$ is Type-I CF ($\cF_1$), $\cT_1\circ \cT_2$ is Type-I CF. If $\cT_1$ is cheap, then as long as $\cT_2$ is Type-II CF ($\cF_2$), $\cT_1\circ \cT_2$ is CF ($\cF$); see Example~\ref{alg:prox-grad}.
\end{itemize}

We will see more examples of the above cases in the rest of the paper.
\cut{
\rev{\subsection{Demonstration with Logistic Regression}
In this subsection, we compare the efficiency of three coordinate update schemes (cyclic, cyclic permutation, and random) with the full gradient descent method for solving the regularized logistic regression problem
\begin{equation}\label{eqn:l2_log}
\Min_{x} \frac{\lambda}{2} \|x\|_2^2 + \sum_{i = 1}^p \log\left(1 + \exp(- b_i \cdot a_i^\top x)\right).
\end{equation}
The goal of this experiment is to show the efficiency of the coordinate update methods for composition operators. We solve~\eqref{eqn:l2_log} with the following gradient descent method
\begin{equation}\label{eqn:gd_for_log_loss}
x^{k+1} = x^k - \eta_k \left(\lambda  x^k + \sum_{i = 1}^p \frac{-b_i}{1 + \exp{(b_i \cdot a_i^\top  x^k)}} a_i\right),
\end{equation}
where $\eta_k$ is the step size. The update~\eqref{eqn:gd_for_log_loss} can be treated as a combination of the four operators ($\cI, A, A^\top, \cT$), i.e.,
\begin{equation}\label{eqn:log-reg-update}
x^{k + 1} = (1 - \lambda \eta_k) \cI x^k + \eta_k A^\top \circ \cT \circ A x^k,
\end{equation}
where $\cT(y) = (\frac{b_1}{1 + \exp(b_1 \cdot y_1)}, ..., \frac{b_p}{1 + \exp(b_p \cdot y_p)})^\top$ and $A = (a_1,  ..., a_p)^\top \in \RR^{p \times m}$. As explained in Example \ref{exp:log-grad}, the update~\eqref{eqn:log-reg-update} is CF if $Ax^k$ is maintained. It is worth mentioning that greedy coordinate update with Gauss-Southwell rule is not an efficient choice for solving~\eqref{eqn:l2_log}, since the complexity of computing the scores is $O(mp)$ even though $Ax^k$ is maintained. We test~\eqref{eqn:log-reg-update} with $A$ and $x$ generated with standard normal distribution, and $b = \sign (Ax)$. We set $m = p = 100$, and set $\lambda = 0$ and $\lambda = 1$ for the first and the second experiment respectively. When $\lambda = 0$, the objective is convex, but not strongly convex, so Figure \ref{fig:log_a} shows that all of the methods converge with sublinear rate. When $\lambda = 1$, the objective function is strongly convex, all methods converge with linear rate as shown in Figure \ref{fig:log_b}. In both scenarios, coordinate update methods converge faster than the full gradient descent method.
\begin{figure}[!htbp] \centering
    \begin{subfigure}[b]{0.48\linewidth}
        \includegraphics[width=50mm]{./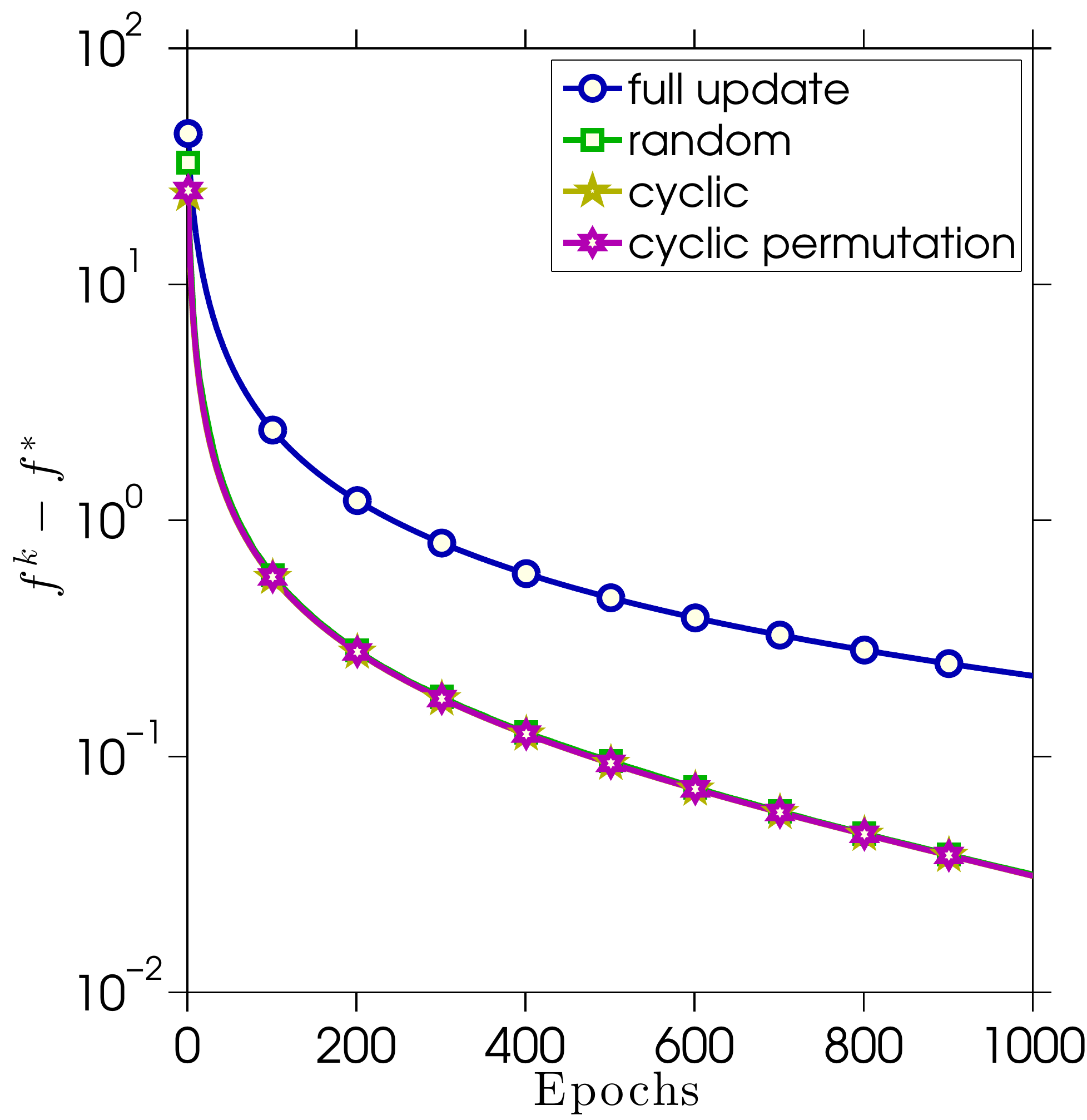}
        \caption{$\lambda = 0$}
        \label{fig:log_a}
    \end{subfigure} %
    \quad
    \begin{subfigure}[b]{0.48\linewidth}
        \includegraphics[width=50mm]{./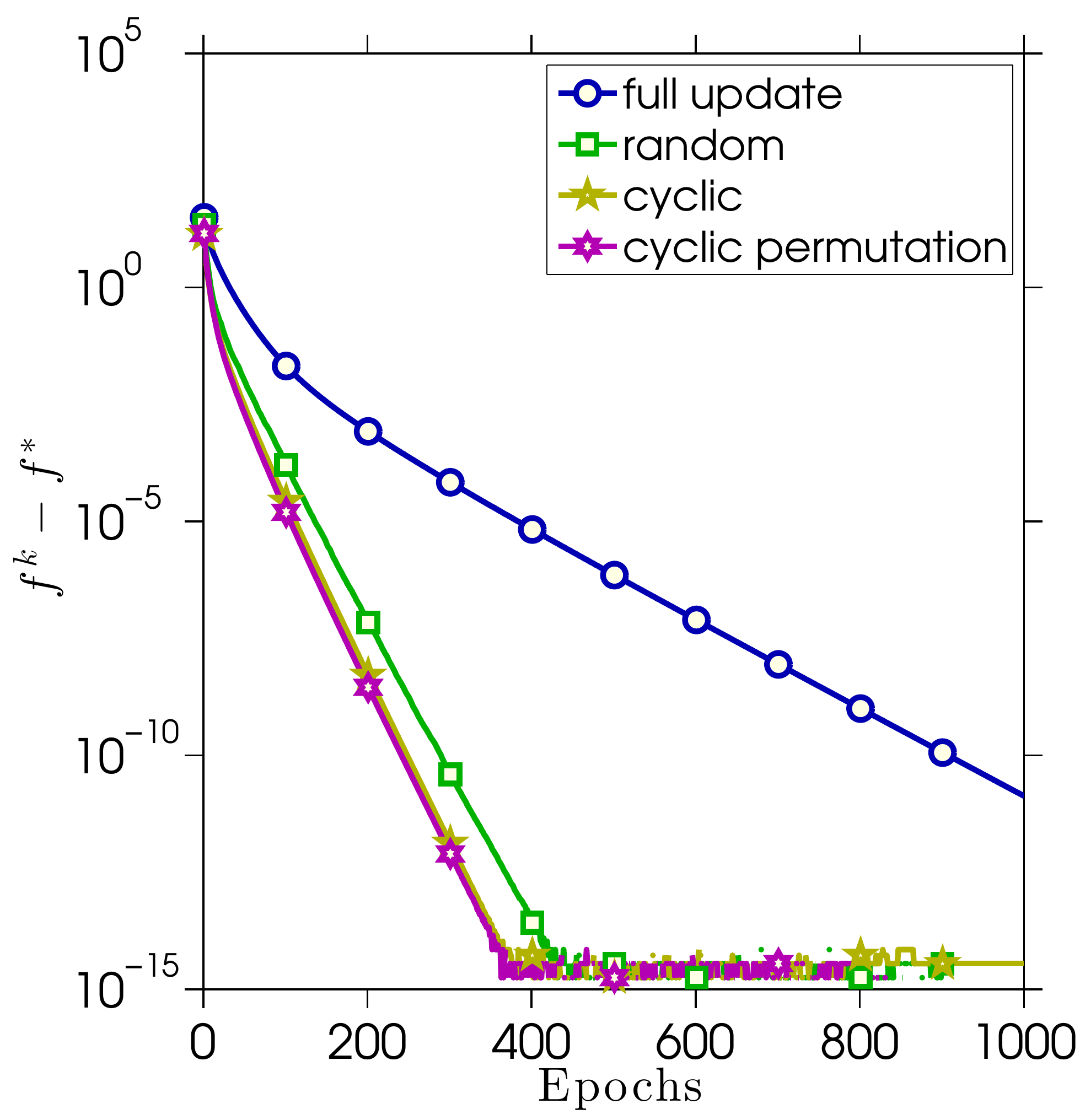}
        \caption{$\lambda = 1$}
        \label{fig:log_b}
    \end{subfigure} %
    \caption{Compare the convergence of three different coordinate update algorithms with full gradient descent algorithm for solving logistic regression.}
    \label{fig:l2_log_results}
\end{figure}
}

\cut{If $\cT_1\in \cC_1$ and $\cT_2\in \cE$ (is easy to update), then  $\cT_1\circ \cT_2\in \cF$ is amenable for coordinate update. Specifically, we shall cache $\cT_2 x$, and when one or a few coordinates of $x$ change, we can compute $((\cT_1\circ \cT_2) x)_i$ by first refreshing $\cT_2 x$ at a low cost and then computing $((\cT_1\circ \cT_2) \tilde x)_i=(\cT_1(\cT_2x))_i$.}
}
\cut{
\begin{example}[linear composing easy-to-update] SOCP
\end{example}

\begin{example}[$\cC_1\cup \cC_2$ composing easy-to-update]
 Prox-linear / forward-backward
\end{example}
}

\cut{
In this section, we consider operators which can be written as a composition of several simpler operators. We will discuss when the composite operators will be BC-friendly.

Assume that operator $\cT$ is a composition of two operators, i.e.,
\begin{equation}
\cT = \cT_1 \circ \cT_2.
\end{equation}
We consider the following three main cases, where \uline{BC-friendly of individual operators induces BC-friendly} of composite operator.

\begin{itemize}
\item If $\cT_1$ is with property $\cC_1$, $\cT_1\circ \cT_2$ has the same property as $\cT_2$, i.e., if $\cT_2$ is with property $\cC_1$ (or $\cC_2$, $\cC_3$, $\cB_1$, $\cB_2$, $\cB_3$), then $\cT_1\circ\cT_2$ is with property $\cC_1$ (or $\cC_2$, $\cC_3$, $\cB_1$, $\cB_2$, $\cB_3$, respectively),
\item If $\cT_1$ is with property $\cC_2$, and $\cT_2$ is with property $\cC_1$, then $\cT_1\circ\cT_2$ is with property $\cC_2$.
\item If $\cT_1$ is with property $\cC_2$, and $\cT_2$ is with property $\cC_2$, then $\cT_1\circ\cT_2$ is with property $\cC_2$.
\item If $\cT_1$ is with property $\cC_2$, and $\cT_2$ is with property $\cC_3$, then $\cT_1\circ\cT_2$ is with property $\cC_3$.
\item If $\cT_1$ is with property $\cC_2$, and $\cT_2$ is with property $\cB_1$, then $\cT_1\circ\cT_2$ is with property $\cB_1$?
\item $\cdots$
\end{itemize}

Case 1. If $\cT_1=\cT_{1,1}\times\cdots\times \cT_{1,m}$ is block-separable, then
$$ \cS_i \circ (\cT_1 \circ \cT_2) = \cT_{1,i}\circ (\cS_i \circ \cT_2).$$
If $\cT_2$ is BC-friendly, then the composite map $\cT$ is also BC-friendly.

Case 2.  If $\cT_1$ is  sparse supported, then $\exists\, \II_{i}$ such that $$(\cS_i\circ \cT_1)x = (\cT_1x)_i = \cT_{1,i}(\{x_j\}_{j\in \II_i}),$$
where $|I_i|$ is small, and thus
$$ \cS_i \circ (\cT_1 \circ \cT_2) = \cT_{1,i}(\{\cS_j \circ \cT_2\}_{just}).$$
In this case, if $\cT_2$ is BC friendly, then the composite map $\cT$ is also BC-friendly.

Case 3.  If $\cT_1$ is BC-friendly and $\cT_2 \, x$ is  easy-to-compute/update, then $\cS_i \circ (\cT_1 \circ \cT_2)(x) = (\cS_i \circ \cT_1)(\cT_2 \, x )$ and $\cT$ is BC-friendly.

}

\subsection{Operator Splitting Schemes}\label{sec:splitting}
We will apply our discussions above to operator splitting and  obtain new algorithms. But first, we review several major operator splitting schemes and discuss their CF properties. We will encounter important concepts such as \emph{(maximum) monotonicity} and \emph{cocoercivity}, which are given in Appendix~\ref{sec:op-concept}. For a monotone  operator $\cA$, the \emph{resolvent operator} $\cJ_{\cA}$ and the \emph{reflective-resolvent operator}  $\cR_{\cA}$ are also defined there, in~\eqref{def-resolvent} and~\eqref{def-ref}, respectively. \cut{The coordinate update methods we propose, based on operator splitting schemes, all have convergence guarantee, at least for async-parallel (thus also stochastic) update \cite{Peng_2015_AROCK}. However, naively extending existing convergent algorithms to coordinate update schemes may result in divergence or wrong solutions; see also Appendix  \ref{sec:op-concept} for a counterexample.}  

Consider the following problem: given three operators $\cA,\cB,\cC$, possibly set-valued,  \begin{equation}\label{eqn:3s_problem}
\text{find } x \in \HH \qquad \text{ such that }  \qquad 0 \in \cA x + \cB x +\cC x,
\end{equation}
where ``$+$" is the Minkowski sum.
This is a high-level abstraction of many problems or their optimality conditions. The study began in the 1960s, followed by a large number of algorithms and applications over the last fifty years. Next, we review a few basic methods for solving \eqref{eqn:3s_problem}.

When $\cA, \cB$ are maximally monotone (think it as the subdifferential $\partial f$ of a proper convex function $f$) and $\cC$ is $\beta$-cocoercive (think it as the gradient $\nabla f$ of a $1/\beta$-Lipschitz differentiable function $f$),  a solution can be found by the iteration \eqref{fpi} with $\cT=\TS$, introduced recently in \cite{davis2015three}, where  \cut{three operator splitting (3S) for solving \eqref{eqn:3s_problem} is defined by}
\beq\label{3s}
\TS := \cI- \cJ_{\gamma \cB}+ \cJ_{\gamma \cA}\circ(2 \cJ_{\gamma \cB}- \cI - \gamma \cC\circ \cJ_{\gamma \cB}).
\eeq \cut{It can be shown that if $\cC$ is $\beta$-cocoercive, then}Indeed, by setting  $\gamma\in(0,2\beta)$, $\cT_{3S}$ is $(\frac{2\beta}{4\beta-\gamma})$-averaged (think it as a property weaker than the Picard contraction; in particular,  $\cT$ may not have a fixed point). Following the standard convergence result (cf. textbook \cite{bauschke2011convex}), provided that $\cT$ has a  fixed point, the sequence from~\eqref{fpi} converges to a fixed-point $x^*$ of $\cT$. Note that, instead of $x^*$, $\cJ_{\gamma\cB}(x^*)$ is a solution to~\eqref{eqn:3s_problem}.  \cut{, and one can choose $\gamma\in(0,2\beta)$ for convergence.}

Following \S\ref{sc:comb},  $\TS$ {is CF if } $\cJ_{\gamma \cA}$ is separable ($\cC_1$), $\cJ_{\gamma \cB}$ is \cut{easy-to-compute or }Type-II CF ($\cF_2$), and $\cC$ is Type-I CF ($\cF_1$). 

We give a few special cases of $\TS$ below, which have much longer history. They all converge to a fixed point $x^*$ whenever a solution exists and $\gamma$ is properly chosen. If $\cB\neq 0$, then {$\cJ_{\gamma \cB}(x^*)$, instead of $x^*$, is a solution to \eqref{eqn:3s_problem}.

 \textbf{Forward-Backward Splitting (FBS):} Letting $\cB=0$ yields $\cJ_{\gamma\cB}=\cI$. Then, $\TS$ reduces to FBS \cite{passty1979FBS}:
 \begin{equation}\label{eq:FBS}
 \TFBS:=\cJ_{\gamma \cA}\circ(\cI-\gamma \cC)
 \end{equation}
 for solving the problem $0\in\cA x+\cC x$.

\textbf{Backward-Forward Splitting (BFS):} Letting $\cA=0$ yields $\cJ_{\gamma\cA}=\cI$. Then, $\TS$ reduces to BFS:
  \begin{equation}\label{eq:BFS}\TBFS:=(\cI-\gamma \cC)\circ \cJ_{\gamma \cB}
  \end{equation}
for solving the problem $0\in\cB x+\cC x$. When $\cA=\cB$, $\TFBS$ and $\TBFS$ apply the same pair of operators in the opposite orders, and they solve the same problem. Iterations based on $\TBFS$ are rarely used in the literature because they  need an extra application of $\cJ_{\gamma B}$ to return the solution, so $\TBFS$ is seemingly an unnecessary variant of $\TFBS$. However, they become  different for coordinate update; in particular, $\TBFS$ is CF (but $\TFBS$ is generally not) when $\cJ_{\gamma \cB}$ is \cut{easy-to-compute or }Type-II CF ($\cF_2$) and $\cC$ is Type-I CF ($\cF_1$). Therefore, $\TBFS$ is worth discussing alone.

\textbf{Douglas-Rachford Splitting (DRS):} Letting $\cC=0$, $\TS$ reduces to
  \begin{equation}\label{eq:DRS}\TDRS:=\cI-\cJ_{\gamma \cB}+\cJ_{\gamma\cA}\circ(2\cJ_{\gamma \cB}-\cI)=\frac{1}{2}(\cI+\cR_{\gamma\cA}\circ \cR_{\gamma\cB})
  \end{equation}
introduced in \cite{douglas1956DRS} for solving the problem $0\in\cA x+\cB x$. A more general splitting is the  Relaxed Peaceman-Rachford Splitting (RPRS) with $\lambda\in[0,1]$:
 \begin{equation}
\cT_{\text{RPRS}} = (1 - \lambda)\, \cI + \lambda \, \cR_{\gamma \cA} \circ \cR_{\gamma \cB},
\end{equation}
which recovers $\TDRS$ by setting $\lambda=\frac{1}{2}$ and Peaceman-Rachford Splitting (PRS) \cite{peaceman1955PRS} by letting $\lambda=1$.

\textbf{Forward-Douglas-Rachford Splitting (FDRS):} Let $V$ be  a linear subspace, and $\cN_V$ and $\cP_V$ be its normal cone and projection operator, respectively. The FDRS~\cite{briceno2015FDRS}
 $$\TFDRS=\cI-\cP_V+\cJ_{\gamma\cA}\circ(2\cP_V-\cI-\gamma \cP_V\circ\tilde{\cC}\circ\cP_V),$$
aims at finding a point $x$ such that $0\in\cA\,x+\tilde{\cC}\,x+\cN_V\,x$. If an optimal $x$ exists, we have $x\in V$ and $\cN_Vx$ is the orthogonal complement of $V$. Therefore, the problem is equivalent to finding $x$ such that $0\in\cA\,x+\cP_V\circ\tilde{\cC}\circ\cP_V\,x+\cN_V\,x$. Thus, $\TS$ recovers $\TFDRS$ by letting $\cB=\cN_V$ and $\cC=\cP_V\circ\tilde{\cC}\circ\cP_V$.

\textbf{Forward-Backward-Forward Splitting (FBFS):} Composing $\TFBS$ with one more forward step gives $\TFBFS$ introduced in \cite{FBF_Tseng}:
\begin{align}\label{eqn:fbf}
\TFBFS = -\gamma \cC  + (\cI-\gamma \cC)\cJ_{\gamma \cA}(\cI-\gamma \cC).
\end{align} 
$\TFBFS$ is not  a special case of $\TS$. At the expense of one more application of $(\cI-\gamma \cC)$, $\TFBFS$ relaxes the convergence condition  of $\TFBS$ from  the cocoercivity of $\cC$ {to its monotonicity. (For example, a nonzero skew symmetric matrix is monotonic but not cocoercive.)}
From Table \ref{table:comp-op}, we know that $\TFBFS$ is CF if both $\cC$ and $\cJ_{\gamma \cA}$ are separable.

\subsubsection{Examples in Optimization}
Consider the optimization problem
\begin{equation}\label{eq:opt-example}
\Min_{x\in X}\, f(x)+g(x),
\end{equation}
where $X$ is the feasible set and $f$ and $g$ are objective functions. We present examples of operator splitting methods discussed above.

\cut{We discuss a few well-known optimization methods for solving problems in the form of \eqref{eq:opt-example} and relate them to the previous operator splitting methods.}

\begin{example}[{proximal gradient method}]\label{alg:prox-grad} Let $X=\RR^m$, $f$ be differentiable, and $g$ be proximable in \eqref{eq:opt-example}. Setting $\cA=\partial g$ and $\cC=\nabla f$ in \eqref{eq:FBS} gives $\cJ_{\gamma \cA}=\prox_{\gamma g}$ and reduces $x^{k+1}=\TFBS(x^k)$ to prox-gradient iteration: \cut{. one can apply the proximal gradient method with update}
\begin{equation}\label{eq:prox-grad}x^{k+1}=\prox_{\gamma g}(x^k-\gamma\nabla f(x^k)).
\end{equation}
A special case of \eqref{eq:prox-grad} with $g=\iota_X$ is the projected gradient iteration:
\begin{equation}\label{eq:proj-grad}
x^{k+1}=\cP_X(x^k-\gamma \nabla f(x^k)).
\end{equation}

If $\nabla f$ is CF and $\prox_{\gamma g}$ is (nearly-)separable (e.g., $g(x)=\|x\|_1$ or the indicator function of a box constraint) or if $\nabla f$ is Type-II CF and $\prox_{\gamma g}$ is cheap (e.g., $\nabla f(x)=Ax-b$ and $g=\|x\|_2$), then the FBS iteration~\eqref{eq:prox-grad} is CF. In the latter case, we can also apply the BFS iteration~\eqref{eq:BFS} (i.e,  compute $\prox_ {\gamma g}$ and then perform the gradient update), which is also CF.

\end{example}


\DIFdelbegin 
\DIFdelend \DIFaddbegin \begin{example}[ADMM]\DIFaddend \label{alg:admm} Setting $X=\RR^m$ simplifies \eqref{eq:opt-example} to
\begin{equation}\label{eq:compx-y}
\Min_{x,y}~f(x)+g(y),\quad\St~ x-y=0.
\end{equation}
The ADMM method iterates:
\begin{subequations}\label{eq:admmx-y}
\begin{align}
&x^{k+1}=\prox_{\gamma f}(y^k-\gamma s^k),\\
&y^{k+1}=\prox_{\gamma g}(x^{k+1}+\gamma s^{k}),\\
&s^{k+1}=s^k+\frac{1}{\gamma}(x^{k+1}-y^{k+1}).
\end{align}
\end{subequations}
(The iteration can be generalized to handle the constraint $Ax-By=b$.) The dual problem of \eqref{eq:compx-y} is $\min_s f^*(-s)+g^*(s)$, where $f^*$ is the convex conjugate of $f$. Letting $\cA=-\partial f^*(-\cdot)$ and $\cB=\partial g^*$ in \eqref{eq:DRS} recovers the iteration  \eqref{eq:admmx-y} through (see the derivation in Appendix \ref{sec:drs-admm})
\begin{align*}
&t^{k+1}=\TDRS(t^k)=t^k-\cJ_{\gamma \cB}(t^k)+\cJ_{\gamma\cA}\circ(2\cJ_{\gamma \cB}-\cI)(t^k).
\end{align*}  
From the results in \S\ref{sc:comb}, a sufficient condition for the above iteration to be CF is that $\cJ_{\gamma\cA}$ is (nearly-)separable and $\cJ_{\gamma\cB}$ being CF.
\end{example}

The above abstract operators and their CF properties will be applied in~\S\ref{sec:applications} to give interesting algorithms for several applications.

%

\cut{
\begin{subsection}{Relaxed Peaceman-Rachford splitting (RPRS)}
The relaxed Peaceman-Rachford splitting (RPRS) also targeted at solving \eqref{eqn:monotone_problem}. The RPRS is define in the following
\begin{equation}
\cT_{\text{RPRS}} = (1 - \lambda)\, \cI + \lambda \, \cR_{\gamma \cA} \circ \cR_{\gamma \cB},
\end{equation}
where $\cR_{\gamma \cA}:= 2 J_{\gamma \cA} - I$ is the reflection operator. Note that $\lambda = 1$ gives the Peaceman-Rachford splitting (PRS) operator, and $\lambda = \frac{1}{2}$ gives the Douglas-Rachford splitting (DRS) operator.

Applying the results of composite maps, the RPRS splitting operator is CF in either of the following two cases:

Case 1. $ \cR_{\gamma A}$ is block-separable  and $\cR_{\gamma B}$ is CF.

Case 2. $ \cR_{\gamma A}$ is Type-I CF and $\cR_{\gamma B}$ is easy-to-update.
\end{subsection}

\subsection{Forward-Backward-Forward operator splitting}
The Forward-Backward-Forward operator splitting (FBF)~\cite{FBF_Tseng} for solving the problem of finding the zero in the sum of two operators, i.e.,
\begin{equation}\label{eqn:2_problem}
\text{find } x \in \HH \qquad \text{ such that }  \qquad 0 \in \cA\,x + \cC\,x,
\end{equation}
is defined in the following
\begin{align}\label{eqn:fbf}
\cT_{\text{FBF}} = -\gamma \cC  + (I-\gamma \cC)\cJ_{\gamma \cA}(I-\gamma \cC).
\end{align}
The advantage of FBF comparing to FBS is that the cocoercivity condition of $\cC$ is relaxed into monotonicity at the expense of additional computations.
Then, we know that $\cT_{\text{FBF}}$ is CF, if both $\cC$ and $\cJ_{\gamma \cA}$ are separable.
}

%


\section{Primal-dual Coordinate Friendly Operators}\label{sec:p-d}
{We study how to solve the  problem
\begin{equation}
\underset{x\in\mathbb{H}}{\text{\normalfont minimize }} f(x)+g(x)+h(Ax),\label{pdproblem}
\end{equation}
{with primal-dual splitting algorithms, as well as their coordinate update versions. Here, $f$ is differentiable 
and $A$ is a ``$p$-by-$m$" linear operator from $\mathbb{H}=\HH_1\times\cdots\times\HH_m$ to $\mathbb{G}=\GG_1\times\cdots\times\GG_p$.
\cut{[do we mention saddle-point and VI at the end of this section?]} Problem~\eqref{pdproblem} abstracts many applications in image processing and machine learning.
\begin{example}[image deblurring/denoising]
Let $u^0$ be an image, where $u_i^0\in[0,255]$, and $B$ be the blurring linear operator. Let $\|\nabla u\|_1$ be the anisotropic\footnote{Generalization to the isotropic case is straightforward by grouping variables properly.} total variation of $u$ (see \eqref{eqn:tv_def} for definition). Suppose that $b$ is a noisy observation of $Bu^0$. Then, we can try to recover $u^0$ by solving
\begin{equation}
\Min_u\, \frac{1}{2}\|Bu-b\|^2+\iota_{[0,255]}(u)+\lambda\|\nabla u\|_1,
\end{equation}
which can be written in the form of $\eqref{pdproblem}$ with $f=\frac{1}{2}\|B\cdot-b\|^2$, $g=\iota_{[0,255]}$, $A=\nabla$, and $h=\lambda\|\cdot\|_{1}$.
\end{example}
More examples with the formulation \eqref{pdproblem} will be given in \S\ref{sec:emp}. 
In general, primal-dual methods are capable of solving complicated problems involving constraints and the compositions of proximable and linear maps like $\|\nabla u\|_1$.

In many applications, although $h$ is  proximable, $h\circ A$  is generally non-proximable and non-differentiable. To avoid using slow subgradient methods,  we can consider the  primal-dual splitting approaches to separate $h$ and $A$ so that $\prox_{h}$ can be applied.
\cut{
The first approach is \emph{variable splitting}: first rewrite the problem~\eqref{pdproblem} as $$\Min_{x,y}\, f(x)+g(x)+h(y),\quad \St~ y=Ax,$$ and then apply ADMM~\eqref{eq:admmx-y}. The $y$-subproblem of ADMM reduces to computing $\prox_{h}$. The $x$-subproblem of ADMM has the form
\beq\label{admmx}
\Min_x f(x)+g(x)+\frac{\eta}{2}\|Ax-y\|^2+(\mbox{linear terms in }x).
\eeq
When $f+g$ is differentiable or proximable,~\eqref{admmx} can be solved by an iterative procedure. In image deblurring, with $g=0$ and proper boundary conditions, even a closed-form solution can be found~\cite{wang2008new}. Generally, it is not easy to directly solve~\eqref{admmx}. By introducing more auxiliary variables, $A$ can also be separated from $g$ and $h$, but the resulting subproblem involving $A$ will need form and invert  $A^\top A$~\cite{o2014primal}. (A remedy is to linearize $\|Ax-y\|^2$ in the subproblem, yet it can be shown as a special case of the primal-dual splitting approach below.)

The second approach is \emph{primal-dual splitting}. 
}
We derive that the equivalent form (for convex cases) of $\eqref{pdproblem}$ is to find $x$ such that
\begin{equation}
0\in (\nabla f+\partial g+A^\top\circ\partial h\circ A)(x).
\end{equation}
Introducing the dual variable $s\in\mathbb{G}$ and applying the biconjugation property:  $s\in \partial h(Ax)\Leftrightarrow Ax\in \partial h^*(s)$, yields the equivalent condition
\begin{equation}
0\in\bigg(\underbrace{\begin{bmatrix}
\nabla f & 0\\
0 & 0
\end{bmatrix}}_{\mbox{operator}~\cA}+\underbrace{
\begin{bmatrix}
\partial g  &  0\\
0 & \partial h^*
\end{bmatrix}+\begin{bmatrix}
0&A^\top\\
-A&0
\end{bmatrix}}_{\mbox{operator}~\cB}\bigg) \underbrace{\begin{bmatrix}
x\\
s
\end{bmatrix}}_{z},\label{pdkkt}
\end{equation}
which we shorten as $0\in\cA z+\cB z$, with $z\in\HH\times\GG=:\FF$.

\cut{\begin{remark}
In a variant of problem $\eqref{pdproblem}$\cite{}, $h(Ax)$ is replaced by $(h\Box l)(Ax)$ with $l$ a strongly convex function (thus $l^*$ is differentiable). $\Box$ is the infimal convolution operator defined as $(h\Box l)(y)=\inf_{z\in\GG} h(z)+l(y-z)$, which is used, for example, in dual smoothing\cite{?}. Because $s\in (\partial h\Box\partial l)(Ax)\Leftrightarrow Ax\in \partial h^*(s)+\partial l^*(s)$, $\eqref{pdkkt}$ becomes: $0\in\begin{bmatrix}
\nabla f(x)\\
0
\end{bmatrix}+
\begin{bmatrix}
\partial g(x)\\
\partial h^*(s)
\end{bmatrix}+\begin{bmatrix}
0&A^\top\\
-A&0
\end{bmatrix}\begin{bmatrix}
x\\
s
\end{bmatrix}$, which can be solved in the same way as $\eqref{pdproblem}$.
\end{remark}
}
Problem $\eqref{pdkkt}$ can be solved  by the Condat-V\~{u} algorithm~\cite{condat2013primal, vu2013splitting}:
\begin{equation}
\left\{
\begin{array}{l}
s^{k+1}=\prox_{\gamma h^*} (s^k+\gamma Ax^k),\\
x^{k+1}=\prox_{\eta g}(x^k-\eta(\nabla f(x^k)+A^\top(2s^{k+1}-s^k))),
\end{array}
\right.\label{vucondat}
\end{equation}
which explicitly applies $A$ and $A^\top$ and updates $s,x$ in a Gauss-Seidel style~\footnote{By the Moreau identity: $\prox_{\gamma h^*}=\cI -\gamma \prox_{\frac{1}{\gamma}h}(\frac{\cdot}{\gamma})$, one can compute $\prox_{\frac{1}{\gamma} h}$ instead of $\prox_{\gamma h^*}$, which inherits the same separability properties from $\prox_{\frac{1}{\gamma} h}$.}.
We introduce an operator $\TVC:\FF\to \FF$ and write
$$\mbox{iteration~\eqref{vucondat}}\quad\Longleftrightarrow\quad z^{k+1}=\TVC(z^k).$$


Switching the orders of  $x$ and $s$ yields the following algorithm: 
\begin{equation}
\left\{
\begin{array}{l}
x^{k+1}=\prox_{\eta g}(x^k-\eta(\nabla f(x^k)+A^\top s^k)),\\
s^{k+1}=\prox_{\gamma h^*} (s^k+\gamma A(2x^{k+1}-x^k)),
\end{array}
\right.{\text{ as }z^{k+1}=\cT'_{\textnormal{CV}} z^k.}\label{vucondat2}
\end{equation}
It is known from \cite{combettes2014forward,davis2014convergence} that both $\eqref{vucondat}$ and $\eqref{vucondat2}$  reduce to iterations of nonexpansive operators (under a special metric), i.e., $\TVC$ is  nonexpansive; see Appendix~\ref{sec:vc-op} for the reasoning. 
\begin{remark}
Similar primal-dual algorithms can be used to solve other problems such as saddle point problems~\cite{lebedev1967duality,mclinden1974extension,briceno2013monotone} and variational inequalities~\cite{tseng1991applications}. Our coordinate update algorithms below apply to these problems as well.
\end{remark}
\subsection{Primal-dual Coordinate Update Algorithms}\label{sec:pdcu}
In this subsection, we make the following assumption.
\begin{assumption}
Functions $g$ and $h^*$ in the problem~\eqref{pdproblem} are separable and proximable. Specifically, $$g(x)=\displaystyle\sum_{i=1} ^m g_i(x_i)\quad\mbox{and}\quad h^*(y)=\displaystyle\sum_{j=1}^ph^*_i(y_i).$$\label{pdassum}
Furthermore, $\nabla f$ is CF.
\end{assumption}
\begin{proposition}\DIFaddbegin \label{prop1}
\DIFaddend Under Assumption $\ref{pdassum}$, the followings hold:
\begin{enumerate}[(a)]
\item when $p=O(m)$, the Condat-Vu operator $\TVC$ in~\eqref{vucondat} is CF, more specifically, $$\nops{\{z^k,Ax\}}{\{z^+,Ax^+\}}=O\left(\frac{1}{m+p}\nops{z^k}{\TVC z^k}\right);$$
\item when $m\ll p$ and $\nops{x}{\nabla f(x)}=O(m)$, the Condat-Vu operator $\cT'_\textnormal{CV}$ in~\eqref{vucondat2} is CF, more specifically, $$\nops{\{z^k,A^\top s\}}{\{z^+,A^\top s^+\}}=O\left(\frac{1}{m+p}\nops{z^k}{\cT'_{\textnormal{CV}} z^k}\right).$$
\end{enumerate}
\end{proposition}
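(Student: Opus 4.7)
The plan is to verify directly from definition~\eqref{op-cuf} that the stated cost bound holds, splitting by whether the selected coordinate $i\in[m+p]$ is dual ($i=m+j$, $j\in[p]$) or primal ($i\in[m]$); the two cases have very different cost profiles, and the bound must absorb the heavier of the two. In both parts the maintained object is a single matrix--vector product ($Ax$ in (a), $A^\top s$ in (b)) together with whatever auxiliary data $\nabla f$ already needs by its CF hypothesis.

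For part (a), I first record the cost of $\TVC z^k$ read off from~\eqref{vucondat}: $Ax$ is $O(mp)$, the separable $\prox_{\gamma h^*}$ is $O(p)$, $A^\top(2s^+-s)$ is $O(mp)$, the gradient is $\nops{x}{\nabla f(x)}$, and the separable $\prox_{\eta g}$ is $O(m)$, totalling $O(mp)+\nops{x}{\nabla f(x)}$; with $p=O(m)$ this averages to $O(p)+O(\tfrac{1}{m}\nops{x}{\nabla f(x)})$ per coordinate. The dual case is immediate: $s_j^+=\prox_{\gamma h_j^*}(s_j+\gamma(Ax)_j)$ costs $O(1)$ from the maintained $Ax$, and nothing needs refreshing because $x$ is fixed. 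The primal case is the bottleneck: to compute $A^\top_{:,i}(2s^+-s)=\sum_j A_{ji}(2s_j^+-s_j)$, I would generate each $s_j^+-s_j$ on the fly from $(Ax)_j$ in $O(1)$ work, yielding $O(p)$ for the inner product; the gradient coordinate $\nabla_i f(x)$ costs $O(\tfrac{1}{m}\nops{x}{\nabla f(x)})$ by the CF hypothesis on $\nabla f$; the separable prox is $O(1)$; and the rank-one refresh $Ax\leftarrow Ax+(x_i^+-x_i)A_{:,i}$ is $O(p)$. The totals match the stated bound.

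Part (b) is the symmetric argument with $x$ and $s$ interchanged. Maintaining $A^\top s$, the full update of $\cT'_{\textnormal{CV}}$ still costs $O(mp)$ (the hypothesis $\nops{x}{\nabla f(x)}=O(m)$ makes the gradient a lower-order term), which under $m\ll p$ averages to $O(m)$ per coordinate. A primal coordinate update reads $(A^\top s)_i$ from $\cM$ in $O(1)$, adds $\nabla_i f(x)$ of cost $O(1)$ by hypothesis, applies the separable prox, and needs no refresh of $A^\top s$ because $s$ is fixed---total $O(1)$. A dual coordinate update $s_j^+=\prox_{\gamma h_j^*}(s_j+\gamma A_{j,:}(2x^+-x))$ forces a full evaluation of $x^+$, costing $O(m)$ (gradient, maintained $A^\top s$, separable prox); the inner product $A_{j,:}(2x^+-x)$ is $O(m)$; and the rank-one refresh $A^\top s\leftarrow A^\top s+(s_j^+-s_j)A^\top_{:,j}$ is another $O(m)$. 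This matches the target $O(m)$.

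The only delicate point, and the one I would defend carefully, is the unavoidability of the ``secondary'' sweep: in (a) a primal update needs all of $s^+$ through $A^\top_{:,i}(2s^+-s)$, and in (b) a dual update needs all of $x^+$ through $A_{j,:}(2x^+-x)$. The dimensional hypotheses $p=O(m)$ in (a) and $m\ll p$ in (b) are exactly the regimes in which this forced $O(p)$ (respectively $O(m)$) sweep coincides, in order of magnitude, with the average per-coordinate cost of the full update. Outside those regimes the CF inequality would break at the stated constants, so the hypotheses are not merely convenient---they are where the accounting is tight.
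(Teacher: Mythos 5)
Your proof is correct and follows essentially the same route as the paper's: split by whether the updated coordinate is primal or dual, maintain $Ax$ (resp.\ $A^\top s$), and charge $O(1)$ to the cheap case and $O(p)+O(\tfrac{1}{m}\nops{x}{\nabla f(x)})$ (resp.\ $O(m)$) to the case that forces a full sweep of the other block, which the dimensional hypotheses make comparable to the average per-coordinate cost of the full update. The closing observation about tightness of the regimes $p=O(m)$ and $m\ll p$ is a nice addition not present in the paper, but the core accounting is identical.
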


\cut{
\begin{assumption}
$g(x)=\displaystyle\sum_{i=1} ^m g_i(x_i)$ and $h^*(y)=\displaystyle\sum_{j=1}^ph^*_j(y_j)$ where the $g_i$'s and $h^*_j$'s are proximable functions, i.e. $\prox_g,\prox_{h^*}\in\cC_1$ (can be relaxed to $\cC_2$). $\nabla f$ (and $\nabla l^*$) are easy-to-update.\label{pdassum}
\end{assumption}

We show under this assumption either $\cT_1$ or $\cT_2$ is CF. }
\begin{proof}
Computing $z^{k+1}=\TVC z^k$ involves evaluating $\nabla f$, $\prox_g$, and $\prox_{h^*}$, applying $A$ and $A^\top$, and adding vectors.
It is easy to see  $\nops{z^k}{\TVC z^k}=O(mp+m+p)+\mathfrak{M}[x\to\nabla f(x)]$, and $\nops{z^k}{\cT'_{\textnormal{CV}}z^k}$ is the same.\\
(a) We assume $\nabla f\in\cF_1$ for simplicity, and other cases are similar.
\begin{enumerate}
\item If $(\TVC z^k)_j=s^{k+1}_i$, computing it involves: adding $s^k_i$ and $\gamma (Ax^k)_i$, and evaluating $\prox_{\gamma h^*_i}$. In this case $\nops{\{z^k,Ax\}}{\{z^+,Ax^+\}}=O(1)$.
\item If $(\TVC z^k)_j=x^{k+1}_i$, computing it involves evaluating: the entire $s^{k+1}$ for $O(p)$ operations, $(A^\top(2s^{k+1}-s^k))_i$ for $O(p)$ operations, $\prox_{\eta g_i}$ for $O(1)$ operations, $\nabla_i f({x}^{k})$ for $O(\frac{1}{m}\nops{x}{\nabla f(x)})$ operations, as well as updating $Ax^+$ for $O(p)$ operations.
In this case\\ $\nops{\{z^k,Ax\}}{\{z^+,Ax^+\}}=O(p+\frac{1}{m}\nops{x}{\nabla f(x)})$.
\end{enumerate}
Therefore, $\nops{\{z^k,Ax\}}{\{z^+,Ax^+\}}=O\big(\frac{1}{m+p}\nops{z^k}{\TVC z^k}\big)$. 
\\[5pt]
(b) When $m\ll p$ and $\nops{x}{\nabla f(x)}=O(m)$, following  arguments similar to the above, we have\\$\nops{\{z^k,A^\top s\}}{\{z^+,A^\top s^+\}}=O(1)+\nops{x}{\nabla_i f(x)}$ if $(\cT'_{\textnormal{CV}} z^k)_j=x_i^{k+1}$; and $\nops{\{z^k,A^\top s\}}{\{z^+,A^\top s^+\}}=O(m)+\nops{x}{\nabla f(x)}$ if $(\cT'_{\textnormal{CV}} z^k)_j=s_i^{k+1}$.\\ In both cases $\nops{\{z^k,A^\top s\}}{\{z^+,A^\top s^+\}}=O(\frac{1}{m+p}\nops{z^k}{\cT'_{\textnormal{CV}} z^k})$.
\end{proof}
\subsection{Extended Monotropic Programming}\label{sec:emp}
We develop a primal-dual coordinate update algorithm for the extended monotropic program:
\begin{equation}
\begin{array}{rl}
\underset{x\in\mathbb{H}}{\text{minimize  }} &~g_1(x_1)+g_2(x_2)+\cdots+g_m(x_m)+f(x),\\
\St &~A_1x_1+A_2x_2+\cdots+A_mx_m=b,
\end{array}\label{emp}
\end{equation}
where $x = (x_1, \ldots, x_m) \in \HH=\HH_1\times\ldots \times \HH_m$ with $\HH_i$ being Euclidean spaces. It generalizes linear, quadratic, second-order cone, semi-definite programs by allowing  extended-valued objective functions $g_i$ and $f$.
It is a special case of~\eqref{pdproblem} by letting $g(x)=\displaystyle\sum_{i=1}^m g_i(x_i)$, $A=[A_1,\cdots, A_m]$ and
$h=\iota_{\{b\}}$.
\begin{example}[quadratic programming]
Consider the quadratic program
\begin{equation}
\underset{x\in\mathbb{R}^m}{\textnormal{minimize }} \frac{1}{2}x^\top Ux+c^\top x,~\St~ Ax=b,~x\in X,\label{qp}
\end{equation}
where $U$ is a symmetric positive semidefinite matrix and $X=\{x:x_i \geq 0~\forall i\}$.
Then, \eqref{qp} is a special case of \eqref{emp}\cut{ $$\underset{x\in\mathbb{R}^n}{\textnormal{minimize }} \frac{1}{2}x^\top Ux+c^\top x+\iota_{X}(x)+\iota_{\{b\}}(Ax).$$}
with  $g_i(x_i)=\iota_{\cdot\geq 0}(x_i)$, $f(x)=\frac{1}{2}x^\top Ux+c^\top x$ and $h=\iota_{\{b\}}$.
\end{example}
\begin{example}[Second Order Cone Programming (SOCP)]
The SOCP
\begin{align*}
\underset{x\in\mathbb{R}^m}{\textnormal{minimize }} ~c^\top x,&\quad\St~ Ax=b,\\
&\hspace{56pt} x\in X=Q_1\times \cdots\times Q_{n},
\end{align*}
(where the number of cones $n$ may not be equal to the number of blocks $m$,) can be written in the form of \eqref{emp}: $\Min_{x\in\mathbb{R}^m} \iota_X(x)+c^\top x+\iota_{\{b\}}(Ax).$ 
\end{example}
Applying iteration $\eqref{vucondat}$ to problem $\eqref{emp}$ and eliminating $s^{k+1}$ from the second row yield the Jacobi-style update (denoted as $\cT_\textnormal{emp}$):
\begin{equation}
\left\{
\begin{array}{l}
s^{k+1}=s^k+\gamma (Ax^k-b),\\
x^{k+1}=\prox_{\eta g}(x^k-\eta(\nabla f(x^k)+A^\top s^k+2\gamma A^\top Ax^k-2\gamma A^\top b)).
\end{array}
\right.\label{pdemp}
\end{equation}
To the best of our knowledge, this update is never found in the literature. Note that $x^{k+1}$ no longer depends on $s^{k+1}$, making it more convenient to perform coordinate updates.
%
\begin{remark}
In general, when the $s$ update is affine, we can decouple $s^{k+1}$ and $x^{k+1}$ by plugging the $s$ update into the $x$ update. It is the case when $h$ is affine or quadratic in problem \eqref{pdproblem}.
\end{remark}
A sufficient condition for $\cT_\textnormal{emp}$  to be CF is $\prox_g\in\cC_1$ i.e., separable. Indeed, we have $\cT_\textnormal{emp}=\cT_1\circ\cT_2$, where $$\cT_1=\begin{bmatrix}
\cI & 0\\0&\prox_{\eta g}
\end{bmatrix}, \cT_2\begin{bmatrix}
s\\x
\end{bmatrix}=\begin{bmatrix}
s+\gamma (Ax-b)\\
x-\eta(\nabla f(x)+A^\top s+2\gamma A^\top Ax-2\gamma A^\top b)
\end{bmatrix}.$$
  Following Case 5 of Table $\ref{table:comp-op}$, $\cT_\textnormal{emp}$ is CF. When  $m=\Theta(p)$, the separability condition on $\prox_g$ can be relaxed to $\prox_g\in\cF_1$ since in this case $\cT_2\in\cF_2$, and we can apply  Case 7 of Table $\ref{table:comp-op}$ (by maintaining $\nabla f(x)$, $A^\top s$, $Ax$ and $A^\top Ax$.)
\subsection{Overlapping-Block Coordinate Updates}\label{sec:overlap}
{In the coordinate update scheme based on \eqref{vucondat}, if we select $x_i$ to update then we must first compute $s^{k+1}$, because the variables   $x_i$'s and $s_j$'s are coupled through the matrix $A$. However, once $x_i^{k+1}$ is obtained, $s^{k+1}$  is discarded. It is not used to update $s$ or cached for further use. This subsection introduces ways to utilize the otherwise wasted computation. 

We define, for each $i$,  $\JJ(i)\subset [p]$ as the set of indices $ j$ such that $A^\top_{i,j}\neq 0$, and, for each $j$,  $\II(j)\subset [m]$ as the set of indices of $i$ such that $A^\top_{i,j}\neq 0$. We also let $m_j:=|\II(j)|$, and assume $m_j\neq 0,\forall j \in [p]$ without loss of generality.

We arrange the coordinates of $z=[x;s]$ into $m$ overlapping blocks. The $i$th block consists of the coordinate $x_i$ and  all $s_j$'s for $j\in\JJ(i)$. This way, each $s_j$ may appear in more than one block. We propose a block coordinate update scheme based on~\eqref{vucondat}. Because the blocks overlap, each $s_j$ may be updated in multiple blocks, so the $s_j$ update is relaxed with parameters $\rho_{i,j}\ge 0$ (see~\eqref{pdoverlap} below) that satisfy  $\sum_{i\in \II(j)}\rho_{i,j}=1,~~\forall j \in [p].$ The aggregated effect is to update $s_j$ without scaling. (Following the
KM iteration~\cite{krasnosel1955two}, we can also assign a relaxation parameter $\eta_k$ for the $x_i$ update; then, the $s_j$ update should be relaxed with $\rho_{i,j}\eta_k$.)

We propose the following  update scheme:
\begin{equation}
\left\{
\begin{array}{l}
\text{select }i\in[m], \text{ and then compute}\\
\quad\tilde{s}_j^{k+1}=\prox_{\gamma h_j^*} (s_j^k+\gamma (Ax^k)_j),~\mbox{for all}~ j\in\JJ(i),\\
\quad\tilde{x}_i^{k+1}=\prox_{\eta g_i}(x_i^k-\eta(\nabla_i f(x^k)+\sum_{j\in\JJ(i)}A_{i,j}^\top(2\tilde{s}_j^{k+1}-s_j^k))),\\
\quad\text{update }x_i^{k+1}=x_i^k+(\tilde{x}_i^{k+1}-x_i^k), \\
\quad\text{update }s_j^{k+1}=s_j^k+\rho_{i,j}(\tilde{s}_j^{k+1}-s_j^k),~\mbox{for all}~ j\in\JJ(i).
\end{array}
\right.\label{pdoverlap}
\end{equation}
\begin{remark}
The use of relaxation parameters $\rho_{i,j}$ makes our scheme different from that in \cite{pesquet2014class}. 
\end{remark}
Following the assumptions and arguments in \S$\ref{sec:pdcu}$, if we maintain $Ax$, the cost for each block coordinate update is $O(p)+\nops{x}{\nabla_i f(x)}$, which is $O(\frac{1}{m}\nops{z}{\TVC z})$. Therefore the coordinate update scheme \eqref{pdoverlap} is computationally worthy.

Typical choices of $\rho_{i,j}$ include: (1) one of the $\rho_{i,j}$'s is 1 for each $j$, others all equal to 0. This can be viewed as assigning the update of $s_j$ solely to a block containing $x_i$.
(2) $\rho_{i,j}=\frac{1}{m_j}$ for all $i\in\II(j)$. This approach spreads the update of $s_j$ over all the related blocks.

\begin{remark}
The recent paper \cite{fercoq2015coordinate} proposes a different primal-dual coordinate update algorithm.
The authors produce a new matrix $\bar{A}$ based on $A$, with only one nonzero entry in each row, i.e. $m_j=1$ for each $j$. They also modify $h$ to $\bar{h}$ so that the problem
\begin{equation}
\underset{x\in\mathbb{H}}{\textnormal{minimize }} f(x)+g(x)+\bar{h}(\bar{A}x)\label{fbpdproblem}
\end{equation}
has the same solution as $\eqref{pdproblem}$. Then they solve $\eqref{fbpdproblem}$ by the scheme $\eqref{pdoverlap}$. Because they have $m_j=1$, every dual variable coordinate is only associated with one primal variable coordinate.
They create non-overlapping blocks of $z$ by duplicating each dual variable coordinate $s_j$ multiple times. The computation cost for each block coordinate update of their algorithm is the same as $\eqref{pdoverlap}$, but  more memory is needed for the duplicated copies of each $s_j$.
\end{remark}

\subsection{Async-Parallel Primal-Dual Coordinate Update Algorithms and Their Convergence}

In this subsection, we propose two async-parallel primal-dual coordinate update algorithms using the algorithmic framework of~\cite{Peng_2015_AROCK} and state their convergence results.
When there is only one agent, all algorithms proposed in this section reduce to stochastic coordinate update algorithms~\cite{Patrick_2015}, and their convergence is a direct consequence of Theorem~\ref{thm:async-convergence}. Moreover, our convergence analysis also applies to sync-parallel algorithms.

The two algorithms are based on~\S\ref{sec:pdcu} and~\S\ref{sec:overlap}, respectively.


\begin{algorithm}[H]\label{alg:asyn_core}
\SetKwInOut{Input}{Input}\SetKwInOut{Output}{output}
\Input{$z^0\in\FF$,  $K>0$, a discrete distribution $(q_1,\ldots,q_{m+p})$
with $\sum_{i=1}^{m+p}q_i=1$ and $q_i>0,\forall i$, } set global iteration
counter $k=0$\; \While{$k < K$, every agent asynchronously and continuously}{
  select $i_k\in [m+p]$ with $\mathrm{Prob}(i_k=i)=q_i$\;
  perform an update to $z_{i_k}$ according to  \eqref{eqn:asyn_update}\;
  update the global counter $k \leftarrow k+1$\;
 }
 \caption{Async-parallel primal-dual coordinate update algorithm using $\TVC$}
\end{algorithm}

Whenever an agent updates a coordinate, the global iteration number $k$ increases by one.
The $k$th update is applied to $z_{i_k}$, with $i_k$ being independent random variables: $z_i=x_i$ when $i\leq m$ and $z_i=s_{i-m}$ when $i>m$.  Each coordinate update has
the form:
\begin{equation}
\label{eqn:asyn_update}
\left\{
\begin{array}{l}
z_{i_k}^{k+1} = z_{i_k}^k - \frac{\eta_k}{(m+p)q_{i_k}} \, (\hat{z}^k_{i_k}-(\TVC \hat{z}^k)_{i_k}),\\
z_i^{k+1}=z_i^k,\quad \forall i\neq i_k,
\end{array}
\right.
\end{equation}
where $\eta_k$ is the step size, $z^k$ denotes the state of $z$ in global memory just before the update~\eqref{eqn:asyn_update} is applied, and $\hat{z}^k$ is the result that $z$ in global memory is read by an agent to its local cache (see~\cite[\S 1.2]{Peng_2015_AROCK} for both consistent and inconsistent cases). While $(\hat{z}^k_{i_k}-(\TVC \hat{z}^k)_{i_k})$ is being computed, asynchronous parallel computing allows other agents to make updates to $z$, introducing so-called asynchronous delays. Therefore, $\hat{z}^k$
can be different from $z^k$. We refer the reader to~\cite[\S 1.2]{Peng_2015_AROCK} for more details.

The async-parallel algorithm using the overlapping-block coordinate update~\eqref{pdoverlap} is in Algorithm~\ref{alg:asyn_overlap} (recall that the overlapping-block coordinate update is introduced to save computation).

\begin{algorithm}[H]\label{alg:asyn_overlap}
\SetKwInOut{Input}{Input}\SetKwInOut{Output}{output}
\Input{$z^0\in\FF$,  $K>0$, a discrete distribution $(q_1,\ldots,q_{m})$
with $\sum_{i=1}^{m}q_i=1$ and $q_i>0,\forall i$, } set global iteration
counter $k=0$\; \While{$k < K$, every agent asynchronously and continuously}{
  select $i_k\in [m]$ with $\mathrm{Prob}(i_k=i)=q_i$\;
  Compute $\tilde{s}_j^{k+1},\forall j\in\JJ(i_k)$ and  $\tilde{x}_{i_k}^{k+1}$ according to $\eqref{pdoverlap}$\;
  update $x_{i_k}^{k+1}=x_{i_k}^k+\frac{\eta_k}{mq_{i_k}}(\tilde{x}_{i_k}^{k+1}-x_{i_k}^k)$\;
  let $x_{i}^{k+1}=x_{i}^k$ for $i\neq i_k$\;
  update $s_j^{k+1}=s_j^k+\frac{\rho_{i,j}\eta_k}{mq_{i_k}}(\tilde{s}_j^{k+1}-s_j^k),~\mbox{for all}~ j\in\JJ(i_k)$\;
  let $s_j^{k+1}=s_j^k,~\mbox{for all}~ j\notin\JJ(i_k)$\;
  update the global counter $k \leftarrow k+1$\;
 }
 \caption{Async-parallel primal-dual overlapping-block coordinate update algorithm using $\TVC$}
\end{algorithm}
Here we still allow asynchronous delays, so $\tilde{x}_{i_k}$ and $\tilde{s}_j^{k+1}$ are computed using some $\hat{z}^k$.
\begin{remark}
If shared memory is used, it is recommended to set all but one $\rho_{i,j}$'s to $0$ for each $i$.
\end{remark}

\begin{thm}\label{thm:async-convergence}
Let $Z^*$ be the set of  solutions to problem~\eqref{pdproblem} and $(z^k)_{k\geq0}\subset \FF$ be the sequence generated by Algorithm \ref{alg:asyn_core} or Algorithm \ref{alg:asyn_overlap} under the following conditions:
\begin{enumerate}[(i)]
\item $f,g,h^*$ are closed proper convex functions, $f$ is differentiable, and $\nabla f$ is Lipschitz continuous with constant $\beta$;
\item the delay for every coordinate is bounded by a positive number $\tau$, i.e. for every $1\leq i\leq m+p$, $\hat{z}^{k}_{i}=z_i^{k-d_{i}^k}$ for some $0\leq d_{i}^k\leq\tau$;
\item $\eta_k
\in [\eta_{\min}, \eta_{\max}]$ for certain $\eta_{\min},\eta_{\max}>0$.
\end{enumerate}
Then $(z^k)_{k\geq 0}$ converges to a $Z^*$-valued random variable with probability 1.
\end{thm}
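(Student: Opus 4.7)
The plan is to reduce both algorithms to the async-parallel Krasnosel'skii--Mann (ARock) framework of \cite{Peng_2015_AROCK} applied to the Condat--V\~u operator $\TVC$ (or the variant $\cT'_{\textnormal{CV}}$), and then invoke the convergence theorem established there. The starting point is the fact recalled in Appendix~\ref{sec:vc-op} that $\TVC$ is nonexpansive in a weighted inner product $\langle\cdot,\cdot\rangle_M$ for a suitable self-adjoint positive definite operator $M$ on $\FF$, depending on the Condat--V\~u step sizes $\eta,\gamma$ and on the Lipschitz constant $\beta$ of $\nabla f$ from condition (i). Moreover, the fixed-point set $\Fix(\TVC)$ is in bijection with the primal--dual solution set arising from \eqref{pdkkt}, hence with $Z^*$, so it suffices to show almost sure convergence of $(z^k)$ to a $\Fix(\TVC)$-valued random variable.

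For Algorithm~\ref{alg:asyn_core}, I would verify that \eqref{eqn:asyn_update} is exactly an ARock iteration for $\TVC$: at each global step $k$, an agent samples $i_k\in[m+p]$ from the fixed distribution $(q_i)$, reads a delayed $\hat z^k=(z_i^{k-d_i^k})_i$ from shared memory, evaluates the scalar residual $(\hat z^k-\TVC\hat z^k)_{i_k}$, and updates the single coordinate $z_{i_k}$ with relaxation $\eta_k/((m+p)q_{i_k})$. Conditions (ii) and (iii) are precisely the bounded-delay and bounded-stepsize hypotheses of the main ARock convergence theorem in~\cite{Peng_2015_AROCK}, so that theorem directly produces almost sure convergence of $(z^k)$ to a $\Fix(\TVC)$-valued random variable. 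The identification of Algorithm~\ref{alg:asyn_core} with the ARock template only needs one cosmetic check: that the scalar computation actually carried out by an agent (evaluate $s^{k+1}$ first, then $x^{k+1}$, or vice versa in $\cT'_{\textnormal{CV}}$) produces the same coordinate as $(\TVC\hat z^k)_{i_k}$, which follows from Proposition~\ref{prop1} and the Gauss--Seidel structure of \eqref{vucondat}.

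For Algorithm~\ref{alg:asyn_overlap}, I would argue that the overlapping-block update \eqref{pdoverlap} is an average, with weights $\rho_{i,j}$ over $i\in\II(j)$, of the per-block Condat--V\~u coordinate residuals. Because $\sum_{i\in\II(j)}\rho_{i,j}=1$ for every $j$, the expected update applied to each $s_j$ equals $q_{i_k=i}\rho_{i,j}$ summed over $i\in\II(j)$, reproducing the marginal contribution of the $s_j$-coordinate update in the non-overlapping scheme. One can therefore package the block update as a single ``virtual'' coordinate selection on $\TVC$ with a modified but still positive sampling distribution on $[m+p]$; the delays remain bounded by $\tau$ because each block comprises finitely many coordinates. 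The ARock theorem then applies verbatim, yielding the same almost sure convergence statement.

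The main obstacle is bookkeeping the weighted metric $\langle\cdot,\cdot\rangle_M$ alongside coordinate selection: $M$ generally couples the $x$-block and $s$-block, whereas the ARock framework is stated for the Euclidean norm on a product space. The standard remedy, which I would follow, is either to apply the isometry $z\mapsto M^{1/2}z$ and transfer the coordinate structure through $M^{1/2}$ (which works cleanly when $M$ is block-diagonal with respect to the chosen coordinate partition, as it is for the Condat--V\~u operator with separable $g,h^*$), or equivalently to rerun the ARock supermartingale argument with the $M$-induced norm, checking that the fundamental inequality relating $\|z^{k+1}-z^*\|_M^2$ to $\|z^k-z^*\|_M^2$ and $\|z^k-\TVC z^k\|_M^2$ still holds up to a bias term controlled by the delay $\tau$ and absorbed by the step-size bounds in (iii). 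A secondary subtlety for Algorithm~\ref{alg:asyn_overlap} is that concurrent agents writing into overlapping $s_j$'s must not corrupt the atomicity of the aggregated update; handling this formally (as in~\cite{Peng_2015_AROCK}) requires modeling each scalar write as a separate event and reindexing $k$ accordingly, after which the preceding analysis applies unchanged.
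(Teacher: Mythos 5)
Your overall strategy --- cast both algorithms as ARock iterations for $\TVC$ and transfer the convergence theorem of \cite{Peng_2015_AROCK} --- matches the paper's, and your fallback of rerunning the ARock supermartingale argument in the $M$-induced norm is essentially what the paper does in Appendix~\ref{pf:pdasync}. However, your primary route rests on a false premise: the metric $M=\bigl[\begin{smallmatrix}\frac{1}{\eta}I & A^\top\\ A & \frac{1}{\gamma}I\end{smallmatrix}\bigr]$ is \emph{not} block-diagonal with respect to the $(x,s)$ coordinate partition --- the off-diagonal blocks $A$ and $A^\top$ couple every primal coordinate to every dual coordinate --- so the isometry $z\mapsto M^{1/2}z$ destroys the coordinate structure and that reduction fails. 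This is exactly why the paper cannot invoke the ARock theorem verbatim. The quantitative consequence you omit is that the coordinate updates are no longer orthogonal under $\langle\cdot,\cdot\rangle_M$: the Euclidean identity $\sum_i\|\cS_i\hat z\|^2=\|\cS\hat z\|^2$ degrades to $\sum_i\|\cS_i\hat z\|_M^2\le\kappa\|\cS\hat z\|_M^2$ with $\kappa$ the condition number of $M$, and this factor propagates through the fundamental inequality and forces the explicit step-size cap $\eta_{\max}<\frac{mq_{\min}}{2\tau\sqrt{\kappa q_{\min}}+\kappa}$. Asserting that the bias is ``absorbed by the step-size bounds'' without tracking $\kappa$ leaves the argument incomplete, since producing such bounds is the substance of condition (iii).

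Your treatment of Algorithm~\ref{alg:asyn_overlap} also needs repair: the overlapping-block update is not a ``virtual single coordinate selection with a modified sampling distribution,'' because each step simultaneously changes $x_i$ and all $s_j$ with $j\in\JJ(i)$, the latter scaled by $\rho_{i,j}$. The paper instead defines weighted block operators $\cS_i$ (placing the weight $\rho_{i,j}$ on each $\GG_j$ component) and verifies the two properties the supermartingale argument actually needs: $\sum_i\cS_i\hat z=\cS\hat z$, which uses $\sum_{i\in\II(j)}\rho_{i,j}=1$, and the same $\kappa$-bounded sum of squared $M$-norms. Once those two facts are established, the remainder of your outline (the bounded-delay read model, the Lyapunov function on $\FF^{\tau+1}$, and the almost-sure convergence conclusion) goes through as you describe.
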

The  formulas for $\eta_{\min}$ and $\eta_{\max}$, as well as the proof of Theorem~\ref{thm:async-convergence},  are given in Appendix \ref{pf:pdasync} along with additional remarks. The algorithms can be applied to solve  problem~\eqref{pdproblem}. A variety of examples are provided in~\S\ref{sec:ML} and~\S\ref{sec:Imaging}.


\section{Applications}\label{sec:applications}
In this section, we provide examples to illustrate how to develop coordinate update algorithms based on CF operators. The applications are categorized into five different areas. The first subsection discusses three well-known machine learning problems: empirical risk minimization, Support Vector Machine (SVM), and group Lasso. The second subsection  discusses image processing problems including image deblurring, image denoising, and  Computed Tomography (CT) image recovery. The remaining subsections provide applications in finance, distributed computing as well as certain stylized optimization models. Several applications are treated with coordinate update algorithms for the first time.

For each problem, we describe the operator $\cT$ and how to efficiently calculate $(\cT x)_i$. The final algorithm is obtained after plugging the update in a coordinate update framework in \S\ref{sec:literature} along with  parameter initialization, an index selection rule, as well as some termination criteria.

\subsection{Machine Learning}\label{sec:ML}
\subsubsection{Empirical Risk Minimization (ERM)}
We consider the following regularized empirical risk minimization problem
\begin{equation}\label{prob:erm}
\Min_{x\in\RR^m}~ \frac{1}{p}\sum_{j=1}^p\phi_j(a_j^\top x)+f(x)+g(x),
\end{equation}
where $a_j$'s are sample vectors, $\phi_j$'s are loss functions, and $f+g$ is a regularization function. We assume that $f$ is differentiable and $g$ is proximable. Examples of~\eqref{prob:erm} include linear SVM, regularized logistic regression, ridge regression, and Lasso. Further information on ERM can be found in \cite{hastie2005elements}. The need for coordinate update algorithms arises in many applications of~\eqref{prob:erm} where the number of samples or the dimension of $x$ is large. 

We define $A=[a_1^\top;a_2^\top;\dots;a_p^\top]$ and $h(y):=\frac{1}{p}\sum_{j=1}^p\phi_j(y_j)$. Hence, $h(Ax)=\frac{1}{p}\sum_{j=1}^p\phi_j(a_j^\top x)$, and problem~\eqref{prob:erm} reduces to form~\eqref{pdproblem}.
We can apply the primal-dual update scheme to solve this problem, for which we introduce the dual variable $s = (s_1, ..., s_p)^\top$. We use $p+1$ coordinates, where the $0$th coordinate is $x\in\RR^m$ and the $j$th coordinate is $s_j$, $j\in [p]$. The operator $\cT$ is given in~\eqref{vucondat2}. At each iteration, a coordinate is updated:
\begin{equation}
{\left\{
\begin{array}{l}
\text{if }x\text{ is chosen (the index 0), then compute}\\
\qquad {x}^{k+1}=\prox_{\eta g}(x^k-\eta(\nabla f(x^k)+A^\top s^k)),\\
\text{if }s_j\text{ is chosen (an index $j \in [p]$), then compute}\\
\qquad\tilde{x}^{k+1}=\prox_{\eta g}(x^k-\eta(\nabla f(x^k)+A^\top s^k)),\\
\qquad\text{and}\\
\qquad{s}_j^{k+1}=\frac{1}{p}\prox_{p\gamma \phi_j^*} (ps_j^k+p\gamma a_j^\top(2\tilde{x}^{k+1}-x^k)).
\end{array}
\right.
}\end{equation}

We maintain $A^\top s\in \RR^m$ in the memory. Depending on the structure of $\nabla f$, we can compute it each time or maintain it. When $\prox_g\in\cF_1$, we can consider breaking $x$ into coordinates $x_i$'s and also select an index $i$ to update $x_i$ at each time.

\subsubsection{Support Vector Machine}\label{sec:svm}
Given the training data $\{(a_i,\beta_i)\}_{i=1}^m$ with $\beta_i\in\{+1, -1\},\,\forall i$, the kernel support vector machine \cite{scholkopf2001learning} is 
\begin{equation}\label{eq:ksvm}
\begin{array}{rl}
\Min\limits_{x,\xi,y} &\frac{1}{2}\|x\|_2^2+C\sum_{i=1}^m \xi_i,\\
 \St\ & \beta_i(x^\top\phi(a_i)-y)\ge 1-\xi_i,\, \xi_i\ge 0,\, \forall i \in [m],
\end{array}
\end{equation}
where $\phi$ is a vector-to-vector map, mapping each data $a_i$ to a point in a (possibly) higher-dimensional space. If $\phi(a)=a$, then \eqref{eq:ksvm} reduces to the linear support vector machine. The model~\eqref{eq:ksvm}  can be interpreted as finding a hyperplane $\{w:x^\top w-y=0\}$ to separate two sets of points $\{\phi(a_i):\beta_i=1\}$ and $\{\phi(a_i):\beta_i=-1\}$. 

The dual problem of \eqref{eq:ksvm} is
\begin{equation}\label{eq:dksvm}
\Min_s \frac{1}{2}s^\top Q s- e^\top s,\ \St\ 0\le s_i\le C,\,\forall i,\, \sum_i \beta_i s_i=0,
\end{equation}
where $Q_{ij}=\beta_i\beta_j k(a_i,a_j)$, $k(\cdot,\cdot)$ is a so-called \emph{kernel function}, and $e = (1, ..., 1)^\top$. If $\phi(a)=a$, then $k(a_i,a_j)=a_i^\top a_j$.

\subsubsection*{Unbiased case}
If $y=0$ is enforced in \eqref{eq:ksvm}, then the solution hyperplane $\{w:x^\top w=0\}$ passes through the origin and is called \emph{unbiased}. Consequently, the dual problem \eqref{eq:dksvm} will no longer have the linear constraint $\sum_i \beta_i s_i=0$, leaving it with the coordinate-wise separable box constraints $0\le s_i\le C$.  To solve \eqref{eq:dksvm}, we can apply the FBS operator $\cT$ defined by~\eqref{eq:FBS}. Let $d(s):= \frac{1}{2}s^\top Q s- e^\top s$}, $\cA = \prox_{[0, C]}$, and $\cC = \nabla d$.  The coordinate update based on FBS is
$$s_i^{k+1}=\prj_{[0,C]}(s_i^k-\gamma_i\nabla_i d(s^k)),$$
where we can take $\gamma_i=\frac{1}{Q_{ii}}$. 

\subsubsection*{Biased (general) case} In this case, the mode \eqref{eq:ksvm} has $y\in\RR$, so the hyperplane $\{w:x^\top w-y=0\}$ may not pass the origin and is called \emph{biased}. Then,  the dual problem \eqref{eq:dksvm} retains the linear constraint $\sum_i \beta_i s_i=0$. In this case, we apply the primal-dual splitting scheme \eqref{vucondat} or the three-operator splitting scheme \eqref{3s}.

The coordinate update based on the full primal-dual splitting scheme~\eqref{vucondat} is:
\begin{subequations}\label{eq:pd-svm}
\begin{align}
t^{k+1}=&\,t^k+\gamma\sum_{i=1}^m \beta_i s_i^k,\label{eq:pd-svm-t}\\
s_i^{k+1}=&\,\prj_{[0,C]}\left(s_i^k-\eta\big(\nabla_i d(s^k)+\beta_i(2t^{k+1}-t^k)\big)\right), \label{eq:pd-svm-s}
\end{align}
\end{subequations}
{where $t,s$ are the primal and dual variables, respectively. Note that we can let $w:=\sum_{i=1}^m \beta_i s_i$ and maintain it. With variable $w$ and substituting \eqref{eq:pd-svm-t} into \eqref{eq:pd-svm-s}, we can equivalently write \eqref{eq:pd-svm} into

\begin{equation}
{\left\{
\begin{array}{l}
\text{if }t\text{ is chosen (the index 0), then compute}\\
\qquad t^{k+1}=\,t^k+\gamma w^k,\\
\text{if }s_i\text{ is chosen (an index $i \in [m]$), then compute}\\
\qquad s_i^{k+1}=\,\prj_{[0,C]}\left(s_i^k-\eta\big(q_i^\top s^k-1+\beta_i(2\gamma w^k+t^k)\big)\right)\\
\qquad w^{k+1}= w^k + \beta_i (s_i^{k+1} - s_i^k).
\end{array}
\right.
}\end{equation}


We can also apply the three-operator splitting  \eqref{3s} as follows. Let {$D_1:=[0,C]^m$ and $D_2:=\{s: \sum_{i=1}^m \beta_i s_i=0\}$. Let $\cA = \prj_{D_2}$, $\cB = \prj_{D_1}$, and $\cC(x) = Qx - e$,  The full update corresponding to $\cT = (I - \eta_k) \cI + \eta_k \TS$ is
\begin{subequations}\label{eq:3op-svm}
\begin{align}
s^{k+1}=&\prj_{D_2}(u^k),\label{eq:3op-svm-s}\\
u^{k+1}=&u^k+\eta_k\left(\prj_{D_1}\big(2s^{k+1}-u^k-\gamma (Qs^{k+1}-e)\big)-s^{k+1}\right)\label{eq:3op-svm-u},
\end{align}
\end{subequations}
where $s$ is just an intermediate variable.
Let $\tilde{\beta}:=\frac{\beta}{\|\beta\|_2}$ and $w:=\tilde{\beta}^\top u$. Then $\prj_{D_2}(u)=(I-\tilde{\beta}\tilde{\beta}^\top)u$. Hence, $s^{k+1}=u^k-w^k\tilde{\beta}$. Plugging it into~\eqref{eq:3op-svm-u} yields the following coordinate update scheme:
\begin{equation*}
{\left\{
\begin{array}{l}
\text{if }i \in [m]\text{ is chosen, then compute}\\
\quad s^{k+1}_i=u^k_i-w^k\tilde{\beta}_i,\\
\quad u^{k+1}_i=u^k_i+\eta_k\left(\prj_{[0,C]}\left(2s_i^{k+1}-u^k_i-\gamma \big(q_i^\top u^k- w^k (q_i^\top \tilde{\beta})-1\big)\right)-s_i^{k+1}\right) \\
\quad w^{k+1}=w^k+\tilde{\beta}_i(u_i^{k+1}-u_i^k),
\end{array}
\right.
}\end{equation*}
where $w^k$ is the maintained variable and $s^k$ is the intermediate variable.

\subsubsection{Group Lasso}\label{sec:glasso}
The group Lasso regression problem \cite{YL2006GrpLasso} is 
\begin{equation}\label{eq:glasso}
\Min_{x\in\RR^n}  f(x)+\sum_{i=1}^m\lambda_i\|x_i\|_2,
\end{equation} 
where $ f$ is a differentiable convex function, often bearing the form $\frac{1}{2}\|Ax-b\|_2^2$, and $x_i\in \RR^{n_i}$ is a subvector of $x\in\RR^n$ supported on $\II_i\subset [n]$, and $\cup_i \II_i= [n]$. If $\II_i\cap \II_j =\emptyset, \,\forall i\neq j$, it is called \emph{non-overlapping group Lasso}, and if there are two different groups $\II_i$ and $\II_j$ with a non-empty intersection, it is called \emph{overlapping group Lasso}. The model finds a coefficient vector $x$ that minimizes the fitting (or loss) function $f(x)$ and that is group sparse: all but a few $x_i$'s are zero.  

Let $U_i$ be formed by the columns of the identity matrix $I$ corresponding to the indices in $\II_i$, and let $U=[U_1^\top;\ldots;U_m^\top]\in\RR^{(\Sigma_i n_i)\times n}$. Then, $x_i=U_i^\top x$. Let $h_i(y_i)=\lambda_i\|y_i\|_2,\,y_i\in\RR^{n_i}$ for $i \in [m]$, and $h(y)=\sum_{i=1}^m h_i(y_i)$ for $y=[y_1;\ldots;y_m]\in\RR^{\Sigma_i n_i}$. In this way, \eqref{eq:glasso} becomes
\begin{equation}\label{eq:glasso2}
\Min_x f(x)+h(Ux).
\end{equation}

\subsubsection*{Non-overlapping case~\cite{YL2006GrpLasso}} In this case, we have $\II_i\cap \II_j =\emptyset, \,\forall i\neq j$, and can apply the FBS scheme \eqref{eq:FBS} to \eqref{eq:glasso2}. Specifically, let $\cT_1=\partial (h\circ U)$ and $\cT_2=\nabla f$. The FBS full update is 
$$x^{k+1}=\cJ_{\gamma \cT_1}\circ(\cI-\gamma\cT_2)(x^k).$$
The corresponding coordinate update is the following
\begin{equation}\label{update-nonlap-gl}
{\left\{
\begin{array}{l}
\text{if }i \in [m]\text{ is chosen, then compute}\\
\qquad x_i^{k+1}=\argmin_{x_i}\frac{1}{2}\|x_i-x_i^k+\gamma_i \nabla_i f(x^k)\|_2^2+ \gamma_i h_i(x_i),\\
\end{array}
\right.
}\end{equation}
where $\nabla_i f(x^k)$ is the partial derivative of $f$ with respect to $x_i$ and the step size can be taken to be $\gamma_i=\frac{1}{\|A_{:,i}\|^2}$. When $\nabla f$ is either cheap or easy-to-maintain,  the coordinate update in \eqref{update-nonlap-gl} is inexpensive.

\subsubsection*{Overlapping case~\cite{jacob2009group}} This case allows $\II_i\cap \II_j \neq\emptyset$ for some $i\neq j$, causing the evaluation of $\cJ_{\gamma\cT_1}$ to be generally difficult. However, we can apply the primal-dual update \eqref{vucondat} to this problem as
\begin{subequations}\label{eq:pd-glasso}
\begin{align}
s^{k+1}=&\,\prox_{\gamma h^*} (s^k+\gamma U x^k),\label{eq:pd-glasso-s}\\
x^{k+1}=&\,x^k-\eta(\nabla f(x^k)+U^\top(2s^{k+1}-s^k))\DIFaddbegin \DIFadd{,}\DIFaddend \label{eq:pd-glasso-x}
\end{align}
where $s$ is the dual variable. 
\end{subequations}
Note that 
$$h^*(s)=\left\{
\begin{array}{ll}
0,&\mbox{if }\|s_i\|_2\le \lambda_i,\,\forall i,\\
+\infty,&\mbox{otherwise,}
\end{array}
\right.$$
is cheap.
Hence, the corresponding coordinate update of \eqref{eq:pd-glasso} is
\begin{equation}\label{update-lap-gs}
{\left\{
\begin{array}{l}
\text{if $s_i$  is chosen for some $i \in [m]$, then compute}\\
\quad s_i^{k+1}=\,\prj_{B_{\lambda_i}}(s_i^k+\gamma x_i^k) \\
\text{if $x_i$  is chosen for some $i \in [m]$, then compute}\\
\quad x_i^{k+1}=\,x_i^k-\eta\left(U_i^T \nabla f(x^k)+U_i^T \sum_{j, U_i^T U_j \neq 0} U_j(2\prj_{B_{\lambda_j}}(s_j^k+\gamma x_j^k)-s_j^k)\right),\\
\end{array}
\right.
}\end{equation}
where $B_\lambda$ is the Euclidean ball of radius $\lambda$. When $\nabla f$ is easy-to-maintain, the coordinate update in \eqref{update-lap-gs} is inexpensive. To the best of our knowledge, the coordinate update method~\eqref{update-lap-gs} is new.

\subsection{Imaging}\label{sec:Imaging}

\subsubsection{DRS for Image Processing in the Primal-dual Form \cite{o2014primal}}
Many convex image processing problems have the general form
$$\Min_{x} f(x) + g(Ax),$$
where $A$ is a matrix such as a dictionary, sampling operator, or finite difference operator. We can reduce the problem to the system: $0\in\cA(z) + \cB(z)$, where $z=[x;s]$,
$$\cA(z):=\begin{bmatrix}\partial f(x)\\\partial g^*(s)\end{bmatrix},\quad\mbox{and}\quad \cB(z):=\begin{bmatrix}0 & ~A^\top  \\ -A & 0\end{bmatrix}\begin{bmatrix}x\\s\end{bmatrix}.$$
(see Appendix~\ref{sec:vc-op} for the reduction.) The work \mbox{
\cite{o2014primal} }
gives their resolvents    
\begin{align*}\cJ_{\gamma \cA} &= \begin{bmatrix}\prox_{\gamma f}&0\\0&\prox_{\gamma g^*}\end{bmatrix},\\ 
\cJ_{\gamma \cB} &= (I+\gamma \cB)^{-1}= \begin{bmatrix} 0 &~ 0 \\ 0 &~ I\end{bmatrix}+\begin{bmatrix}I\\ \gamma A\end{bmatrix}(I+\gamma^2 A^\top A)^{-1}\begin{bmatrix}I\\ -\gamma A\end{bmatrix}^\top ,
\end{align*}
where $\cJ_{\gamma \cA}$ is often cheap or separable and we can \emph{explicitly form}  $\cJ_{\gamma \cB}$  as a matrix or implement it based on a fast transform. With the defined $\cJ_{\gamma \cA}$ and $\cJ_{\gamma \cB}$, we can apply the RPRS method as $z^{k+1} = \cT_{\text{RPRS}} z^k$. The resulting RPRS operator is CF when $\cJ_{\gamma \cB}$ is CF. Hence, we can derive a new RPRS coordinate update algorithm. We leave the derivation to the readers. Derivations of coordinate update algorithms for more specific image processing problems are shown in the following subsections.

\subsubsection{Total Variation Image Processing}
We consider the following Total Variation (TV) image processing model
\begin{equation}\label{eqn:tvl2}
\Min_{x}~\lambda \|x\|_{\text{TV}} + \frac{1}{2} \|A\,x - b\|^2,
\end{equation}
where $x\in \RR^n$ is the vector representation of the unknown image, $A$ is an $m \times n$ matrix describing the transformation from the image to the measurements $b$. Common $A$ includes sampling matrices in MRI, CT, denoising, deblurring, etc.  Let $(\nabla_i^h,\nabla_i^v)$ be the discrete gradient at pixel $i$ and $\nabla x=(\nabla_1^hx,\nabla_1^vx,\dots,\nabla_n^hx,\nabla_n^vx)^\top$. Then the TV semi-norm $\|\cdot\|_{\text{TV}}$ in the isotropic and anisotropic fashions are, respectively,
\begin{subequations}\label{eqn:tv_def}
\begin{align}
\|x\|_{\text{TV}} &= {\sum_{i} \sqrt{(\nabla_i^h x)^2 + (\nabla_i^v x)^2},}\\
  \| x\|_{\text{TV}} &= \| \nabla x\|_1 = \sum_{i} \left(|\nabla_i^h x| + |\nabla_i^v x|\right).
	\end{align}
\end{subequations}

%
%
%

For simplicity, we use the anisotropic TV for analysis and in the numerical experiment in \S~\ref{sec:tv}. It is {slightly more complicated for the isotropic TV. Introducing the following notation
$$B \DIFaddbegin \DIFadd{:}\DIFaddend = \begin{pmatrix} \nabla \\ A \end{pmatrix}, \quad h (p, q) \DIFaddbegin \DIFadd{:}\DIFaddend = \lambda \|p\|_1 + \frac{1}{2} \|q - b\|^2, $$
 {we can reformulate }\eqref{eqn:tvl2} \DIFaddend as
$$\Min_x ~ h(B\,x) = h(\nabla x, A \, x),$$
which {reduces to the form of~\eqref{pdproblem} with $f=g=0$. Based on its definition, the convex conjugate of $h(p, q)$ and its proximal operator are, respectively, 
\begin{align}
h^* (s, t) &= \iota_{\|\cdot\|_{\infty} \leq \lambda} (s) + \frac{1}{2} \|t + b\|^2 - \frac{1}{2} \|b\|^2, \label{eqn:dual-h}\\
\prox_{\gamma h^*} (s, t) &= \prj_{\|\cdot\|_{\infty} \leq \lambda} (s) + \frac{1}{1 + \gamma} (t - \gamma b). \label{eqn:prox-dual-h}
\end{align}
Let $s, t$ be the dual variables corresponding to $\nabla x$ and $Ax$ respectively, then using \eqref{eqn:prox-dual-h} and applying \eqref{vucondat2} give the following full update:
\begin{subequations}\label{eqn:pd_tvl2}
\begin{align}
x^{k + 1} &= x^k - \eta (\nabla^\top s^k + A^\top  t^k), \\
s^{k + 1} &= \prj_{\|\cdot\|_{\infty} \leq \lambda} \left(s^k + \gamma \nabla (x^k - 2\eta (\nabla^\top s^k + A^\top t^k))\right), \\ 
t^{k+1} &= \frac{1}{1 + \gamma} \left(t^k + \gamma A (x^k - 2 \eta (\nabla^\top s^k + A^\top t^k)) - \gamma b \right).
\end{align}
\end{subequations}
To perform the coordinate updates {as described in \S\ref{sec:p-d}}, we can maintain $\nabla^\top s^k$ and $A^\top t^k$. Whenever a coordinate of $(s,t)$ is updated, the corresponding $\nabla^\top s^k$ (or $A^\top t^k)$ should also be updated. Specifically, we have the following coordinate update algorithm
\begin{equation}\label{update-lap-gl}
{\left\{
\begin{array}{l}
\text{if $x_i$  is chosen for some $i \in [n]$, then compute}\\
\qquad x_i^{k + 1} = x_i^k - \eta (\nabla^\top s^k + A^\top  t^k)_i; \\
\text{if $s_i$  is chosen for some $i \in [2n]$, then compute}\\
\qquad s_i^{k + 1} = \prj_{\|\cdot\|_{\infty} \leq \lambda} \left(s_i^k + \gamma \nabla_i (x^k - 2\eta (\nabla^\top s^k + A^\top t^k))\right) \\
\qquad \text{and update $\nabla^\top s^k$ to $\nabla^\top s^{k+1}$}; \\
\text{if $t_i$  is chosen for some $i \in [m]$, then compute}\\
\qquad t_i^{k+1} = \frac{1}{1 + \gamma} \left(t_i^k + \gamma A_{i,:} (x^k - 2 \eta (\nabla^\top s^k + A^\top t^k)) - \gamma b_i \right)\\
\qquad \text{and update $A^{\top} t^k$ to $A^{\top} t^{k+1}$}. \\
\end{array}
\right.
}\end{equation}

\subsubsection{3D Mesh Denoising}
Following an example in~\cite{repetti2015random}, we consider a 3D mesh described by their nodes $\bar{x}_i=(\bar{x}_i^X,\bar{x}_i^Y,\bar{x}_i^Z), i\in[n]$, and the adjacency matrix $A\in\mathbb{R}^{n\times n}$, where $A_{ij} = 1$ if nodes $i$ and $j$ are adjacent, otherwise $A_{ij} = 0$. We let $\cV_i$  be the set of neighbours of node $i$. Noisy mesh nodes $z_i, i \in [n]$, are observed. We try to recover the original mesh nodes by solving the following optimization problem \cite{repetti2015random}:
\begin{equation}
\Min_{x} ~\sum_{i=1}^n f_i(x_i)+\sum_{i=1}^n g_i(x_i)+\sum_{i} \sum_{j\in \cV_{i}} h_{i,j}(x_{i}-x_{j}),
\end{equation}
where $f_i$'s are differentiable data fidelity terms, $g_i$'s are the indicator functions of box constraints, and $\sum_{i} \sum_{j\in \cV_{i}}h_{i,j}(x_{i}-x_{j})$ is the total variation on the mesh.

We introduce a dual variable $s$ with coordinates $s_{i,j}$, for all ordered pairs of adjacent nodes $(i,j)$, and, based on the overlapping-block coordinate updating scheme $\eqref{pdoverlap}$, perform coordinate update:
\begin{equation}
\left\{
\begin{array}{l}
\text{select $i$ from $[n]$, then compute }\\
\qquad \tilde{s}_{i,j}^{k+1}=\prox_{\gamma h_{i,j}^*} (s_{i,j}^k+\gamma x_i^k-\gamma x_j^k),\forall j\in\cV_i,\\
\qquad \tilde{s}_{j,i}^{k+1}=\prox_{\gamma h_{j,i}^*} (s_{j,i}^k+\gamma x_j^k-\gamma x_i^k),\forall j\in\cV_i,\\
\qquad \textnormal{and update}\\
\qquad {x_i}^{k+1}=\prox_{\eta g_i}(x_i^k-\eta(\nabla f_i(x_i^k)+\sum_{j\in\cV_i}(2\tilde{s}_{i,j}^{k+1}-2\tilde{s}_{j,i}^{k+1}-s_{i,j}^k+{s}_{j,i}^{k}))),\\
\qquad s_{i,j}^{k+1}=s_{i,j}^k+\frac{1}{2}(\tilde{s}_{i,j}^{k+1}-s_{i,j}^k),\forall j\in\cV_i,\\
\qquad s_{j,i}^{k+1}=s_{j,i}^k+\frac{1}{2}(\tilde{s}_{j,i}^{k+1}-s_{j,i}^k),\forall j\in\cV_i.
\end{array}
\right.\nonumber
\end{equation}

\subsection{Finance}

\subsubsection{Portfolio Optimization}
Assume that we have one unit of capital and $m$ assets to invest on. The $i$th asset has an expected return rate $\xi_i\ge 0$\cut{ for $i\in[m]$}. Our goal is to find a portfolio with the minimal risk such that the expected return is no less than $c$. This problem can be formulated as

\begin{equation*}
\begin{array}{l}
\displaystyle
\Min_x ~ \frac{1}{2} x^\top Q x, \\
\displaystyle
\text{subject to}~ x\ge0, \sum_{i=1}^m x_i\le 1,\, \sum_{i=1}^m\xi_i x_i\ge c,
\end{array}
\end{equation*}
where the objective function is a measure of risk, and the last constraint imposes that the expected return is at least $c$. Let $a_1=e/\sqrt{m}$, $b_1=1/\sqrt{m}$, $a_2=\xi/\|\xi\|_2$, and $b_2=c/\|\xi\|_2$, where $e = (1, \dots, 1)^\top, \xi = (\xi_1, \dots, \xi_m)^\top$. The above problem is rewritten as
\begin{equation}\label{eq:portfolio}
\Min_x \frac{1}{2} x^\top Q x,\ \text{subject to }\ x\ge0, ~a_1^\top x\le b_1,\, a_2^\top x\ge b_2.
\end{equation}
We apply the three-operator splitting scheme \eqref{3s} to \eqref{eq:portfolio}. Let $f(x)=\frac{1}{2}x^\top Q x$, $D_1=\{x: x\ge 0\}$, $D_2=\{x: a_1^\top x \le b_1,\, a_2^\top x\ge b_2\}$, $D_{21}=\{x: a_1^\top x=b_1\}$, and $D_{22}=\{x: a_2^\top x=b_2\}$. Based on \eqref{3s}, the full update is
\begin{subequations}\label{eq:3op-portfolio}
\begin{align}
y^{k+1}=&\prj_{D_2}(x^k),\\
x^{k+1}=&x^k+\eta_k\big(\prj_{D_1}(2y^{k+1}-x^k-\gamma \nabla f(y^{k+1}))-y^{k+1}\big), \end{align}
\end{subequations}
where $y$ is an intermediate variable. As the projection to $D_1$ is simple, we discuss how to evaluate the projection to $D_2$. Assume that $a_1$ and $a_2$ are neither perpendicular nor co-linear, i.e., $a_1^\top a_2\neq 0$ and $a_1\neq \lambda a_2$ for any scalar $\lambda$. In addition, assume $a_1^\top a_2>0$ for simplicity. Let $a_3=a_2-\frac{1}{a_1^\top a_2} a_1$, $b_3=b_2-\frac{1}{a_1^\top a_2} b_1$, $a_4=a_1-\frac{1}{a_1^\top a_2} a_2$, and $b_4=b_1-\frac{1}{a_1^\top a_2} b_2$. Then we can partition the whole space into four areas by the four hyperplanes $a_i^\top x=b_i$, $i=1,\ldots,4$. Let $P_i=\{x: a_i^\top x\le b_i, a_{i+1}^\top x\ge b_{i+1}\},\, i=1,2,3$ and $P_4=\{x: a_4^\top x\le b_4, a_1^\top x\ge b_1\}$. Then
$$\prj_{D_2}(x)=\left\{
\begin{array}{ll}
x, & \text{ if }x\in P_1,\\
\prj_{D_{22}}(x), &\text{ if }x\in P_2,\\
\prj_{D_{21}\cap D_{22}}(x), & \text{ if }x\in P_3,\\
\prj_{D_{21}}(x), & \text{ if }x\in P_4.
\end{array}
\right.$$
Let $w_i=a_i^\top x-b_i, i=1,2$, and maintain $w_1,w_2$. Let $\tilde{a}_2=\frac{a_2-a_1(a_1^\top a_2)}{1-(a_1^\top a_2)^2}$, $\tilde{a}_1=\frac{a_1-a_2(a_1^\top a_2)}{1-(a_1^\top a_2)^2}$. Then
\begin{align*}
&\prj_{D_{21}}(x)=x-w_1a_1,\\
&\prj_{D_{22}}(x)=x-w_2a_2,\\
&\prj_{D_{21}\cap D_{22}}(x)=x-w_1 \tilde{a}_1-w_2\tilde{a}_2,
\end{align*}
%
%
Hence, the coordinate update of~\eqref{eq:3op-portfolio} is
\begin{subequations}\label{eq:3op-portfolio2}
\begin{align}
x^k\in P_1:\ x_i^{k+1}= & \textstyle (1-\eta_k)x_i^k+\eta_k\max(0, x_i^k-\gamma q_i^\top x^k),\\
x^k\in P_2:\  x_i^{k+1}= & \textstyle (1-\eta_k)x_i^k+\eta_kw_2^k(a_2)_i+\eta_k\max\left(0, \right.\nonumber\\
&\textstyle \left.x_i^k-\gamma q_i^\top x^k-w_2^k(2(a_{2})_i-\gamma q_i^\top a_2)\right),\\
x^k\in P_3:\ x_i^{k+1}= &\textstyle  (1-\eta_k)x_i^k+\eta_k\left(w_1^k (\tilde{a}_{1})_i+w_2^k(\tilde{a}_{2})_i\right)+\eta_k\max\left(0,\right.\nonumber\\
&\textstyle \left.x_i^k-\gamma q_i^\top x^k-w_1^k (2(\tilde{a}_{1})_i-\gamma q_i^\top \tilde{a}_1)-{w}_2^k(2(\tilde{a}_{2})_i-\gamma q_i^\top \tilde{a}_2)\right),\\
x^k\in P_4:\ x_i^{k+1}= &\textstyle  (1-\eta_k)x_i^k+\eta_k w_1^k(a_{1})_i+\eta_k\max\left(0, \right.\nonumber\\
&\textstyle \left.x_i^k-\gamma q_i^\top x^k-w_1^k(2(a_{1})_i-\gamma q_i^\top a_1)\right),
\end{align}
\end{subequations}
where $q_i$ is the $i$th column of $Q$. At each iteration, we select $i \in [m]$, and perform an update to $x_i$ according to~\eqref{eq:3op-portfolio2} based on where $x^k$ is. We then renew $w_j^{k+1}=w_j^k+a_{ij}(x_i^{k+1}-x_i^k), j=1,2$. Note that checking $x^k$ in some $P_j$ requires only $O(1)$ operations by using $w_1$ and $w_2$, so the coordinate update in \eqref{eq:3op-portfolio2} is inexpensive.

\subsection{Distributed Computing}

\subsubsection{Network}\label{sec:network}
Consider that $m$ worker agents and one master agent form a star-shaped network, where the master agent at the center connects to each of the worker agents. The $m+1$ agents collaboratively solve the consensus problem:  $$\Min_{x} \sum_{i=1}^m f_i(x),$$ where $x\in \RR^d$ is the common variable and each proximable function $f_i$ is held privately by agent $i$. The problem can be reformulated as
\begin{align}
\Min_{x_1,\dots,x_m,y\in \RR^d} F(x) := \sum_{i=1}^mf_i(x_i),\quad \St~ x_i=y, ~\forall i \in [m],
\end{align}
{which has the  KKT condition}
\begin{equation}\label{pro:decentral}
0\in\underbrace{
\begin{bmatrix}
\partial F&0&0\\
0&0&0 \\
0&0&0
\end{bmatrix}}_{\mbox{operator}~\cA}\begin{bmatrix}
x\\
y\\
s
\end{bmatrix}+\underbrace{\begin{bmatrix}
0 & 0&I\\
0 & 0 & -e^\top \\
I & -e & 0
\end{bmatrix}}_{\mbox{operator}~\cC}\begin{bmatrix}
x\\
y\\
s
\end{bmatrix},
\end{equation}
where $s$ is the dual variable.

Applying the FBFS scheme \eqref{eqn:fbf} to~\eqref{pro:decentral} yields the following full update:
\begin{subequations}
\begin{align}
x_i^{k+1} &= \prox_{\gamma f_i}(x_i^k-\gamma s_i^k)+\gamma^2 x_i^{k}-\gamma^2 y^k-2\gamma s_i^k, \label{fbfs_a} \\
y^{k+1} &= (1+m\gamma^2)y^k +3\gamma \sum_{j=1}^m s_j^k -\gamma^2 \sum_{j=1}^m x_j^k,\\
s_i^{k+1} &= s_i^k-2\gamma x_i^k-\gamma \prox_{\gamma f_i}(x_i^k-\gamma s_i^k)+3\gamma y^k+ \gamma^2 \sum_{j=1}^m s_j^k, \label{fbfs_c}
\end{align}
\end{subequations}
where \eqref{fbfs_a} and \eqref{fbfs_c} are applied to all $i\in[m]$.
Hence, for each $i$, we group  $x_i$ and $s_i$ together and assign them on agent $i$.  {We let the master agent } maintain $\sum_j s_j$ and $\sum_j x_j$. Therefore, {in the FBFS coordinate update, updating any $(x_i,s_i)$ needs only $y$ and $\sum_j s_j$ from the master agent, and updating $y$ is done on the master agent. In synchronous parallel setting, at each iteration, each worker agent $i$ computes $s_i^{k+1}, x_i^{k+1}$, then the master agent collects the updates from all of the worker agents and then updates $y$ and $\sum_j s_j$. The above update can be relaxed to be asynchronous. In this case, the master and worker agents work concurrently, the master agent updates $y$ and $\sum_j s_j$ as soon as it receives the updated $s_i$ and $x_i$ from any of the worker agents. It also periodically broadcasts $y$ back to the worker agents.


\cut{
\subsection{Total Variation Image Processing}\label{sec:tvip}
The most widely studied (non-local) Total Variation (TV) model \cite{?} for image processing can been written as
\begin{align}
\Min_x h(Ax) + f(x)
\end{align}
where $f(Ax)$ is the (non-local) TV regularization term and $f(x)$ is the data fitting term that dependent on the image processing problems. If we rearange the image pixels into a vector, and $A$ can be expressed as a sparse matrix for (non-local) TV. For example, the matrix $A$ for the isotropic TV on a $m$ by $n$ image has the size $2mn\times mn$ and can be expressed as
\begin{align}
(Ax)_{(i-1)*m+j} = x_{i*m+j}-x_{(i-1)*m+j},
\end{align}
i.e., the difference between values at the $(i+1,j)$ pixel and the $(i,j)$ pixel.

In order to solve this optimization problem, we can derive the equivalent KKT condition as follows:
\begin{equation}
0\in\underbrace{
\begin{bmatrix}
\partial f(x)\\
\partial h^*(s)
\end{bmatrix}}_{\mbox{operator}~\cA}+\underbrace{\begin{bmatrix}
0&A\\
-A&0
\end{bmatrix}\begin{bmatrix}
x\\
s
\end{bmatrix}}_{\mbox{operator}~\cB}
\end{equation}
}


\subsection{Dimension Reduction}

\subsubsection{Nonnegative Matrix Factorization}
Nonnegative Matrix Factorization (NMF) is an important dimension reduction method for nonnegative data. It was proposed by Paatero and his coworkers in \cite{paatero1994NMF}. Given a nonnegative matrix $A\in\RR^{p\times n}_+$, NMF aims at finding two nonnegative matrices $W\in\RR^{p\times r}_+$ and $H\in\RR^{n\times r}_+$ such that $WH^\top\approx A$, where $r$ is user-specified depending on the applications, and usually $r\ll \min(p,n)$. A widely used model is
\begin{equation}\label{eq:nmf}
\begin{aligned}
&\Min_{W,H}F(W,H):=\frac{1}{2}\|WH^\top-A\|_F^2, \\
& \St\ W\in\RR^{p\times r}_+,\, H\in \RR^{n\times r}_+. 
\end{aligned}
\end{equation}
Applying the projected gradient method \eqref{eq:proj-grad} to \eqref{eq:nmf}, we have
\begin{subequations}\label{pg-nmf}
\begin{align}
&W^{k+1}=\max\big(0, W^k-\eta_k \nabla_W F(W^k, H^k)\big),\\
&H^{k+1}=\max\big(0,H^k-\eta_k \nabla_H F(W^k, H^k) \big).
\end{align}
\end{subequations}
In general, we do not know the Lipschitz constant of $\nabla F$, so we have to choose $\eta_k$ by line search such that the Armijo condition is satisfied.

Partitioning the variables into $2r$ block coordinates: $(w_1, \ldots, w_r, h_1,\ldots, h_r)$ where $w_i$ and $h_i$ are the $i$th columns of $W$ and $H$, respectively, we can apply the coordinate update based on the projected-gradient method: 
\begin{equation}\label{pcg-nmf}
{\left\{
\begin{array}{l}
\text{if $w_{i_k}$  is chosen for some $i_k \in [r]$, then compute}\\
\qquad w_{i_k}^{k+1}=\max\big(0, w_{i_k}^k-\eta_k\nabla_{w_{i_k}} F(W^k, H^k)\big); \\
\text{if $h_{i_k-r}$  is chosen for some $i_k \in \{r+1, ..., 2r\}$, then compute}\\
\qquad h_{i_k-r}^{k+1}=\max\big(0, h_{i_k-r}^k-\eta_k \nabla_{h_{i_k-r}} F(W^k, H^k) \big). 
\end{array}
\right.
}\end{equation}
It is easy to see that $\nabla_{w_i} F(W^k, H^k)$ and $\nabla_{h_i} F(W^k, H^k)$ are both Lipschitz continuous with constants $\|h_i^k\|_2^2$ and $\|w_i^k\|_2^2$ respectively. Hence, we can set 
$$\eta_k=\left\{\begin{array}{ll}
\frac{1}{\|h_{i_k}^k\|_2^2},&\text{ if }1\le i_k\le r,\\[0.1cm]
\frac{1}{\|w_{i_k-r}^k\|_2^2},&\text{ if }r+1\le i_k\le 2r.
\end{array}\right.$$
However, it is possible to have $w_i^k=0$ or $h_i^k=0$ for some $i$ and $k$, and thus the setting in the above formula may have trouble of being divided by zero. To overcome this problem, one can first modify the problem \eqref{eq:nmf} by restricting $W$ to have unit-norm columns and then apply the coordinate update method in \eqref{pcg-nmf}. Note that the modification does not change the optimal value since $WH^\top=(WD)(HD^{-1})^\top$ for any $r\times r$ invertible diagonal matrix $D$. We refer the readers to \cite{XY_2014_ecd} for more details.

Note that
$\nabla_W F(W,H)= (WH^\top-A)H, \nabla_H F(W,H)= (WH^\top-A)^\top W$
and 
$\nabla_{w_i} F(W,H)= (WH^\top-A)h_i, \nabla_{h_i} F(W,H)= (WH^\top-A)^\top w_i,\, \forall i.$
Therefore, the {coordinate updates given in \eqref{pcg-nmf} are computationally worthy (by maintaining the residual $W^k(H^k)^\top-A$).

\subsection{Stylized Optimization}
\subsubsection{Second-Order Cone Programming (SOCP)}\label{sec:socp}
SOCP extends LP by incorporating second-order cones. A second-order cone in $\RR^n$ is $$Q=\big\{(x_1,x_2,\ldots,x_n)\in\RR^n:\|(x_2,\ldots,x_n)\|_2\le x_1\big\}.$$
Given a point $v\in\RR^n$, let  $\rho_1^v:=\|(v_2,\ldots,v_n)\|_2$ and $\rho_2^v:=\frac{1}{2}(v_1+\rho_1^v)$}. Then, the projection {of  $v$ to $Q$ {returns $0$ if $v_1<-\rho_1^v$, returns $v$ if $v_1 \ge \rho_1^v$, and returns $(\rho_2^v,\frac{\rho_2^v}{\rho_1^v}\cdot(v_2,\ldots,v_n))$ otherwise. Therefore, if we define the scalar couple:
$$(\xi_1^v,\xi_2^v) = \begin{cases}(0,0),&\quad v_1<-\rho_1^v,\\
(1,1), &\quad v_1 \ge \rho_1^v,\\
\big(\rho_2^v,\frac{\rho_2^v}{\rho_1^v}\big),&\quad \mbox{otherwise},\end{cases}
$$
then we have $u=\prj_{Q}(v) = \big(\xi_1 ^vv_1,\,\xi_2^v\cdot (v_2,\ldots,v_n)\big)$. Based on this, we have
\begin{proposition}\label{prop:socproj}
\begin{enumerate}
\item Let $v\in\RR^{n}$ and $v^+ := v+ \nu e_i$ for any $\nu\in\RR$. Then, given $\rho_1^v,\rho_2^v,\xi_1^v,\xi_2^v$ {defined above}, it takes $O(1)$ operations to obtain $\rho_1^{v^+},\rho_2^{v^+},\xi_1^{v^+},\xi_2^{v^+}$.
\item Let $v\in\RR^{n}$ and $A=[a_1~A_2]\in\RR^{m\times n}$, where $a_1\in\RR^m,A_2\in\RR^{m\times (n-1)}$. Given $\rho_1^v,\rho_2^v,\xi_1^v,\xi_2^v$, we have $$A(2\cdot\prj_{Q}(v)-v)=((2\xi_1 ^v-1)v_1)\cdot a_1 + (2\xi_2^v-1)\cdot A_2 (v_2,\ldots,v_n)^\top .$$
\end{enumerate}
\end{proposition}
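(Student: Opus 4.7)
The plan is to handle the two parts separately, treating (1) as a bookkeeping exercise on how the cached scalars transform under a rank-one coordinate update of $v$, and (2) as a direct algebraic substitution using the partitioned form of $A$.

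For part (1), I will split on whether $i=1$ or $i\ge 2$. If $i=1$, then $v^+_j=v_j$ for all $j\ge 2$, so $\rho_1^{v^+}=\rho_1^{v}$ is available at no cost, and the updated $\rho_2^{v^+}=\tfrac{1}{2}(v_1+\nu+\rho_1^{v})$ is one scalar addition. If $i\ge 2$, then only the $i$th tail entry changes, and I will use the identity
\[
(\rho_1^{v^+})^2=\sum_{j\ge 2}(v^+_j)^2=(\rho_1^v)^2+2\nu v_i+\nu^2,
\]
which recovers $\rho_1^{v^+}$ from the cached $\rho_1^v$ plus the two scalars $v_i,\nu$ in $O(1)$ operations (one multiplication, one square, one square root). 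Then $\rho_2^{v^+}=\tfrac12(v^+_1+\rho_1^{v^+})$ is another $O(1)$ computation. With $\rho_1^{v^+}$ and $\rho_2^{v^+}$ in hand, the definition of $(\xi_1^{v^+},\xi_2^{v^+})$ amounts to comparing $v^+_1$ against $\pm\rho_1^{v^+}$ and, only in the interior branch, evaluating the single ratio $\rho_2^{v^+}/\rho_1^{v^+}$; all of this is clearly $O(1)$.

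For part (2), I will simply substitute the coordinatewise formula $\prj_Q(v)=\bigl(\xi_1^v v_1,\,\xi_2^v\cdot(v_2,\ldots,v_n)\bigr)$ into the reflection, obtaining
\[
2\prj_Q(v)-v=\bigl((2\xi_1^v-1)v_1,\,(2\xi_2^v-1)(v_2,\ldots,v_n)\bigr),
\]
and then apply the partitioned matrix $A=[a_1~A_2]$ using linearity, which gives exactly
\[
A(2\prj_Q(v)-v)=((2\xi_1^v-1)v_1)\cdot a_1+(2\xi_2^v-1)\cdot A_2(v_2,\ldots,v_n)^\top
\]
as claimed.

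There is no real obstacle here. The only subtlety to flag is the branch structure for $(\xi_1^v,\xi_2^v)$: one must verify that the three regions $\{v_1<-\rho_1^v\}$, $\{v_1\ge\rho_1^v\}$, and the interior case use only a constant number of comparisons and arithmetic operations on quantities already cached, which is immediate. The usefulness of the proposition is not in the depth of its argument but in how it enables subsequent coordinate-update schemes (as in \S\ref{sec:socp}) to perform each per-coordinate step of an SOCP splitting method in $O(1)$ plus the cost of a single sparse update to $A_2(v_2,\ldots,v_n)^\top$, rather than recomputing the cone projection from scratch.
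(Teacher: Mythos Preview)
Your proposal is correct. The paper does not actually supply a proof of Proposition~\ref{prop:socproj}; it states the result immediately after defining $\rho_1^v,\rho_2^v,\xi_1^v,\xi_2^v$ with the phrase ``Based on this, we have,'' treating both parts as direct consequences of those definitions. Your argument is exactly the natural verification the paper leaves implicit: the $i=1$ versus $i\ge 2$ case split with the rank-one norm update $(\rho_1^{v^+})^2=(\rho_1^v)^2+2\nu v_i+\nu^2$ for part~(1), and the substitution of $\prj_Q(v)=(\xi_1^v v_1,\,\xi_2^v(v_2,\ldots,v_n))$ into $A(2\prj_Q(v)-v)$ via the block decomposition $A=[a_1~A_2]$ for part~(2).
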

By the proposition, if $\cT_1$ is an affine operator, then in the composition $\cT_1\circ \prj_{Q}$, {the computation of $\prj_{Q}$ is cheap as long as we maintain $\rho_1^v,\rho_2^v,\xi_1^v,\xi_2^v$.

Given $x,c\in\RR^n$, $b\in\RR^m$, and $A\in\RR^{m\times n}$, the standard form of SOCP is
\begin{subequations}\label{eq:socp}
\begin{align}
\Min_x~c^\top x,&\quad\St~ Ax=b,\\
&\hspace{56pt} x\in X=Q_1\times \cdots\times Q_{\bar{n}},
\end{align}
\end{subequations}
where each $Q_i$ is a second-order cone, and $\bar{n}\not=n$ in general.
The problem~\eqref{eq:socp} is equivalent to
$$\Min_x \big(c^\top  x+\iota_{A\cdot =b}(x)\big)+\iota_X(x),$$
to which we can apply the DRS iteration $z^{k+1} = \TDRS (z^k)$ (see \eqref{eq:DRS}), in which $\cJ_{\gamma \cA} = \prj_X$ and $\cT_{\gamma \cB}$ is a linear operator given by
$$\cJ_{\gamma \cB} (x) = \argmin_y ~ c^\top y+  \frac{1}{2\gamma}\|y-x\|^2\quad \St~ Ay=b.$$
Assume that the matrix $A$ has full row-rank (otherwise, $Ax=b$ has either redundant rows or no solution).
Then, in~\eqref{eq:DRS}, we have $\cR_{\gamma \cB}(x)= Bx+d$, where $B:=I-2A^\top (AA^\top )^{-1}A$ and $d:=2A^\top (AA^\top )^{-1}(b+\gamma Ac)-2\gamma c$.

It is easy to apply coordinate updates to $z^{k+1} = \TDRS (z^k)$ following Proposition \ref{prop:socproj}. Specifically, by maintaining the scalars $\rho_1^v,\rho_2^v,\xi_1^v,\xi_2^v$ for each $v=x_i\in Q_i$ during  coordinate updates, the computation of the projection can be completely avoided. We pre-compute $(AA^\top )^{-1}$ and cache the matrix $B$ and vector $d$. Then, $\TDRS$ is CF, and we have the following coordinate update method
\begin{equation}\label{cf-cone}
{\left\{
\begin{array}{l}
\text{select $i \in [\bar{n}]$, then compute}\\
\qquad y^{k+1}_i = B_i x^k + d_i \\
\qquad x_i^{k+1}=\,\prj_{Q_i}(y_i^{k+1}) + \frac{1}{2} ( x_i^k - y_i^{k+1}), \\
\end{array}
\right.
}\end{equation}
where $B_i \in \RR^{n_i \times n}$ is the $i$th row block submatrix of $B$, and $y_i^{k+1}$ is the intermediate variable.

It is trivial to extend this method for  SOCPs with a quadratic objective:
\begin{align*}
\Min_x~c^\top x+\frac{1}{2}x^\top Cx,&\quad\St~ Ax=b,~ x\in X=Q_1\times \cdots\times Q_{\bar{n}},
\end{align*}
because $\cJ_2$ is still linear.  Clearly, this method applies to linear programs as they are special SOCPs.

Note that many LPs and SOCPs have sparse matrices $A$, which deserve further investigation. In particular, we may prefer not to form $(AA^\top )^{-1}$ and use the results in \S\ref{sec:emp} instead.

\cut{
\subsection{SOCP by primal-dual coordinate update}
SOCP can be viewed as an extended monotropic programming problem $\eqref{emp}$, as illustrated in \S$\ref{sec:emp}$. So it can be solved by using $\eqref{pdemp}$ with a dual variable $s$:
\begin{equation}
\left\{
\begin{array}{l}
s^{k+1}=s^k+\gamma (Ax^k-b)\\
x^{k+1}=\Proj_{Q_1\times\cdots\times Q_{\bar{n}}}(x^k-\eta(c+A^\top s^k+2\gamma A^\top Ax^k-2\gamma A^\top b))
\end{array}
\right.,
\end{equation}
which is CF.
\subsection{Conic programming self-dual embedding}
Self-dual embedding reduces a pair of primal dual conic programs to a single convex feasibility problem. Unlike the previous approach, which focuses on solving either the primal or dual problem, operator splitting can be applied to solve the single feasibility problem. When the problem is optimal, it will return a solution; otherwise, it will return a certificate that proves either primal or dual infeasibility. Self-dual embedding was introduced in \cite{?} and led to solvers in \cite{?,?}. In particular, the solver SCS ....
\subsection{Quadratic programming}
The quadratic programming $\eqref{qp}$
\begin{equation}
\underset{x\in\mathbb{R}^m}{\text{minimize }} \frac{1}{2}x^\top Ux+c^\top x,\text{ subject to } Ax=b,~x\in X,
\end{equation}
where $U$ is a positive semidefinite matrix, $X=\{x:x_i \geq 0,\forall i\geq j\}$, can be viewed as $\eqref{emp}$ and solved by using $\eqref{pdemp}$ with a dual variable $s$:
\begin{equation}
\left\{
\begin{array}{l}
s^{k+1}=s^k+\gamma (Ax^k-b)\\
x^{k+1}=\Proj_{X}(x^k-\eta(Ux^k+c+A^\top s^k+2\gamma A^\top Ax^k-2\gamma A^\top b))
\end{array}
\right.,
\end{equation}
which is CF since $\Proj_X$ belongs to $\cC_1$.
}

\section{Numerical Experiments}\label{sec:numerical}
We illustrate the behavior of coordinate update algorithms for solving portfolio optimization, image processing, and sparse logistic regression problems. Our primary goal is to show the efficiency of coordinate update algorithms compared to the corresponding full update algorithms. We will also illustrate that asynchronous parallel coordinate update algorithms are more scalable than their synchronous parallel counterparts. 

Our first two experiments run on Mac OSX 10.9 with 2.4 GHz Intel Core i5 and 8 Gigabytes of RAM. The experiments were coded in Matlab. The sparse logistic regression experiment runs on 1 to 16 threads on a machine with two 2.5 Ghz 10-core Intel Xeon E5-2670v2 (20 cores in total) and $64$ Gigabytes of RAM. The experiment was coded in C++ with OpenMP enabled. We use the Eigen library\footnote{\url{http://eigen.tuxfamily.org}} for sparse matrix operations.

\cut{\subsection{Least Square}\label{lsqexperiment2}
In this subsection, we compare the efficiency of four different coordinate update schemes (cyclic, cyclic shuffle, random, and greedy with Gauss-Southwell rule) with the full gradient descent method for solving the least square problem
$$\Min_{x} \frac{1}{2} \|A x - b\|^2,$$
where $A \in \RR^{m \times n}$ and $b \in \RR^m$. We solve the above problem with the following update scheme
$$x^{k+1} = x^k - \eta_k A^{\top}(A x^k - b),$$
where $\eta_k$ is the step size. This test uses three datasets, which are summarized in Table \ref{tab:ls-data}.  
\begin{table}[!hbtp]
\centering
 \begin{tabular}{lrrrr}
  \toprule
     & $m$  & $n$ & $A$ & $b$\\
   \midrule
   Dataset I & 1000 & 1000 & \texttt{diag([1:m])} & \texttt{ones(m, 1)} \\
   Dataset II & 1000 & 500 & \texttt{randn(m, n)} & \texttt{ones(m, 1)} \\
   Dataset III & 1000 & 500 & \texttt{rand(m, n)} & \texttt{ones(m, 1)} \\
   \bottomrule
\end{tabular}
 \caption{Three datasets for the least square problem\label{tab:ls-data}}
\end{table}

For both full gradient descent method and greedy coordinate update method with Gauss-Southwell rule, the step size $\eta$ is set to $\frac{2}{\|A\|_2^2}$. For the other three methods, if coordinate $i$ is selected, then $\eta_k$ is set to $\frac{1}{(A^{\top}A)_{ii}}$. Figure \ref{fig:ls_a} shows that for Dataset I, both cyclic update and cyclic shuffle update converge to the optimal solution with one epoch. Random coordinate update converges to the optimal solution after $8$ epochs. However, the gradient descent algorithm and greedy algorithm converge very slowly. This is due to the small step size ($\eta_k = 10^{-6}$). For the other two datasets, we observe that random coordinate update and cyclic shuffle coordinate update give consistent better performance than the full gradient descent algorithm. 
\begin{figure}[!htbp] \centering
    \begin{subfigure}[b]{0.3\linewidth}
        \includegraphics[width=40mm]{./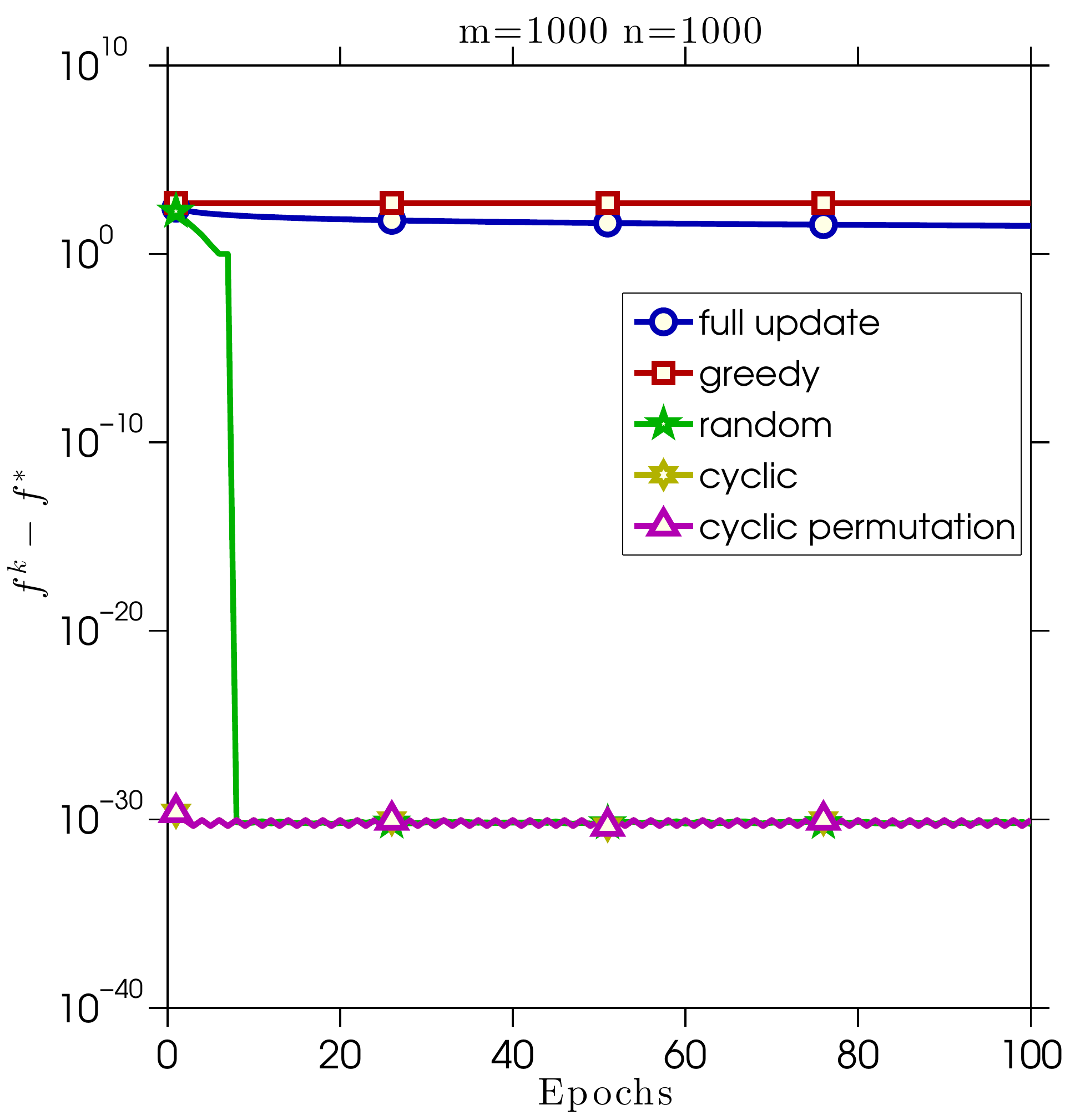}
        \caption{Dataset I}
        \label{fig:ls_a}
    \end{subfigure} %
    \quad
    \begin{subfigure}[b]{0.3\linewidth}
        \includegraphics[width=40mm]{./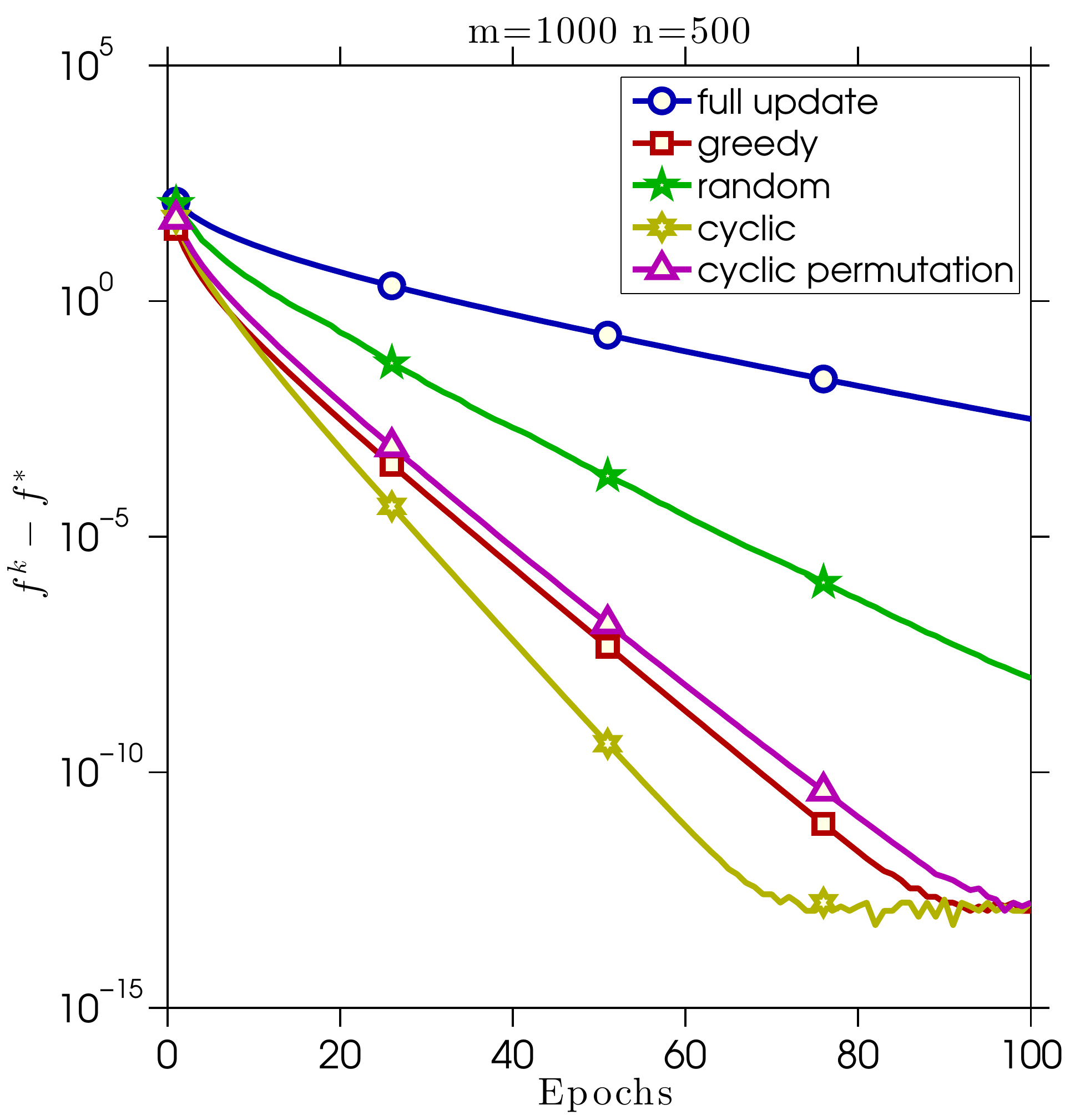}
        \caption{Dataset II}
        \label{fig:ls_b}
    \end{subfigure} %
    \quad
    \begin{subfigure}[b]{0.3\linewidth}
        \includegraphics[width=40mm]{./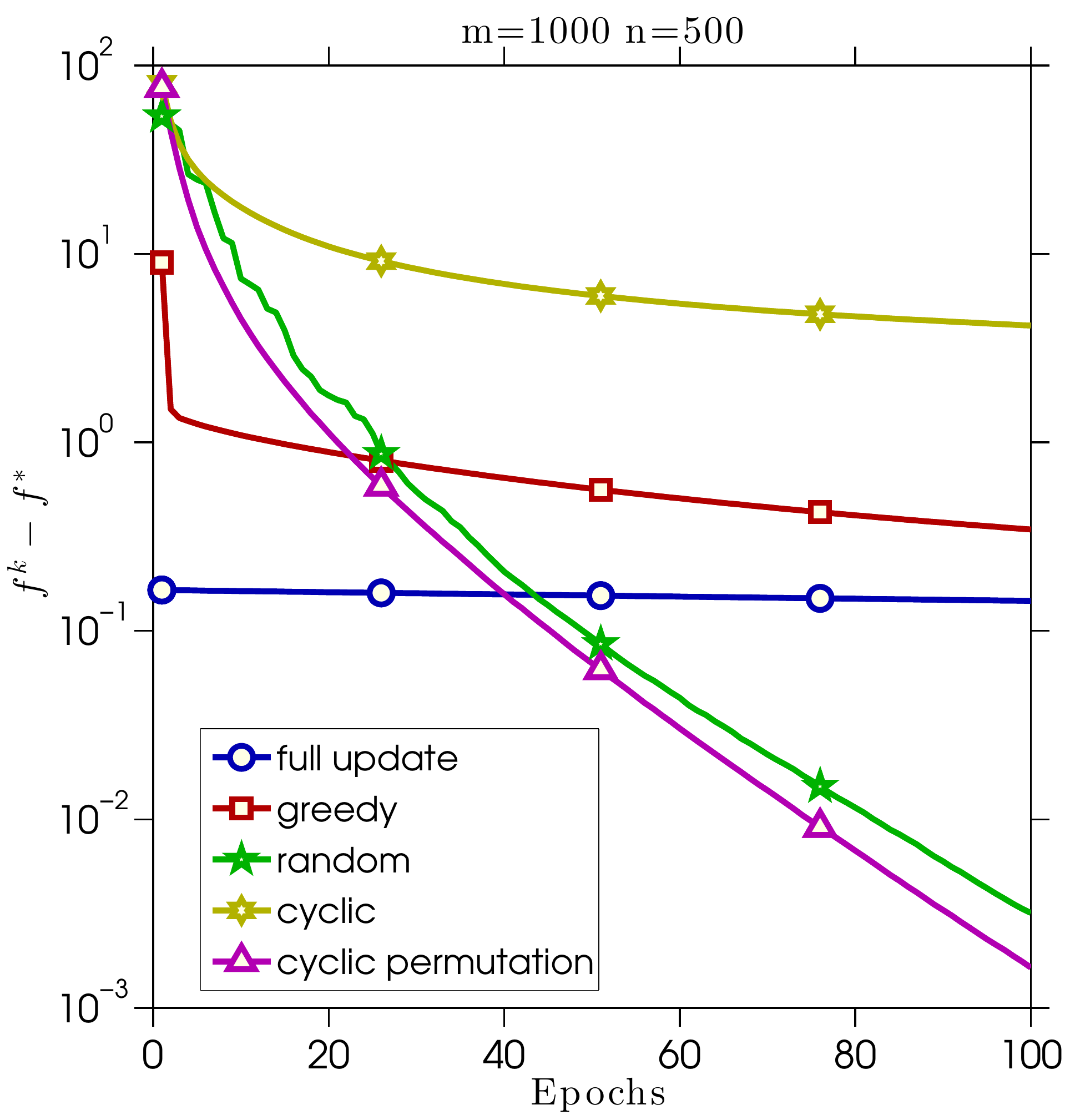}
        \caption{Dataset III}
        \label{fig:ls_c}
    \end{subfigure} %
    \caption{Compare the convergence of four different coordinate update algorithms with full gradient descent algorithm.}
    \label{fig:3s_results}
\end{figure}
}

\subsection{Portfolio Optimization}
In this subsection, we compare the performance of the 3S splitting scheme~\eqref{eq:3op-portfolio} with the corresponding coordinate update algorithm~\eqref{eq:3op-portfolio2} for solving the portfolio optimization problem~\eqref{eq:portfolio}. In this problem, our goal is to distribute our investment resources to all the assets so that the investment risk is minimized and the expected return is greater than $c$. This test uses two datasets, which are summarized in Table~\ref{tab:3s-data}. The NASDAQ dataset is collected through Yahoo! Finance. We collected one year (from 10/31/2014 to 10/31/2015) of historical closing prices for 2730 stocks. 

\begin{table}[htbp]
\centering
 \begin{tabular}{rrr}
  \toprule
    & Synthetic data  & NASDAQ data\\
   \midrule
   Number of assets (N) & 1000 & 2730 \\
   Expected return rate & 0.02 & 0.02 \\
   Asset return rate & \texttt{3 * rand(N, 1) - 1} & mean of 30 days return rate \\
   Risk & covariance matrix + $0.01\cdot I$ & positive definite matrix \\
   \bottomrule
\end{tabular}
 \caption{Two datasets for portfolio optimization \label{tab:3s-data}}
\end{table}

In our numerical experiments, for comparison purposes, we first obtain a high accurate solution by solving~\eqref{eq:portfolio} with an interior point solver. For both full update and coordinate update, $\eta_k$ is set to 0.8. However, we use different $\gamma$. For 3S full update, we used the step size parameter $\gamma_1 = \frac{2}{\|Q\|_2}$, and for 3S coordinate update, $\gamma_2 = \frac{2}{\max\{Q_{11}, ..., Q_{NN}\}}$. In general, coordinate update can benefit from more relaxed parameters. The results are reported in Figure \ref{fig:3s_results}. We can observe that the coordinate update method converges much faster than the 3S method for the synthetic data. This is due to the fact that $\gamma_2$ is much larger than $\gamma_1$. However, for the NASDAQ dataset, $\gamma_1 \approx \gamma_2$, so 3S coordinate update is only moderately faster than 3S full update.

\begin{figure} \centering
    \begin{subfigure}[b]{0.45\linewidth}
        \includegraphics[width=60mm]{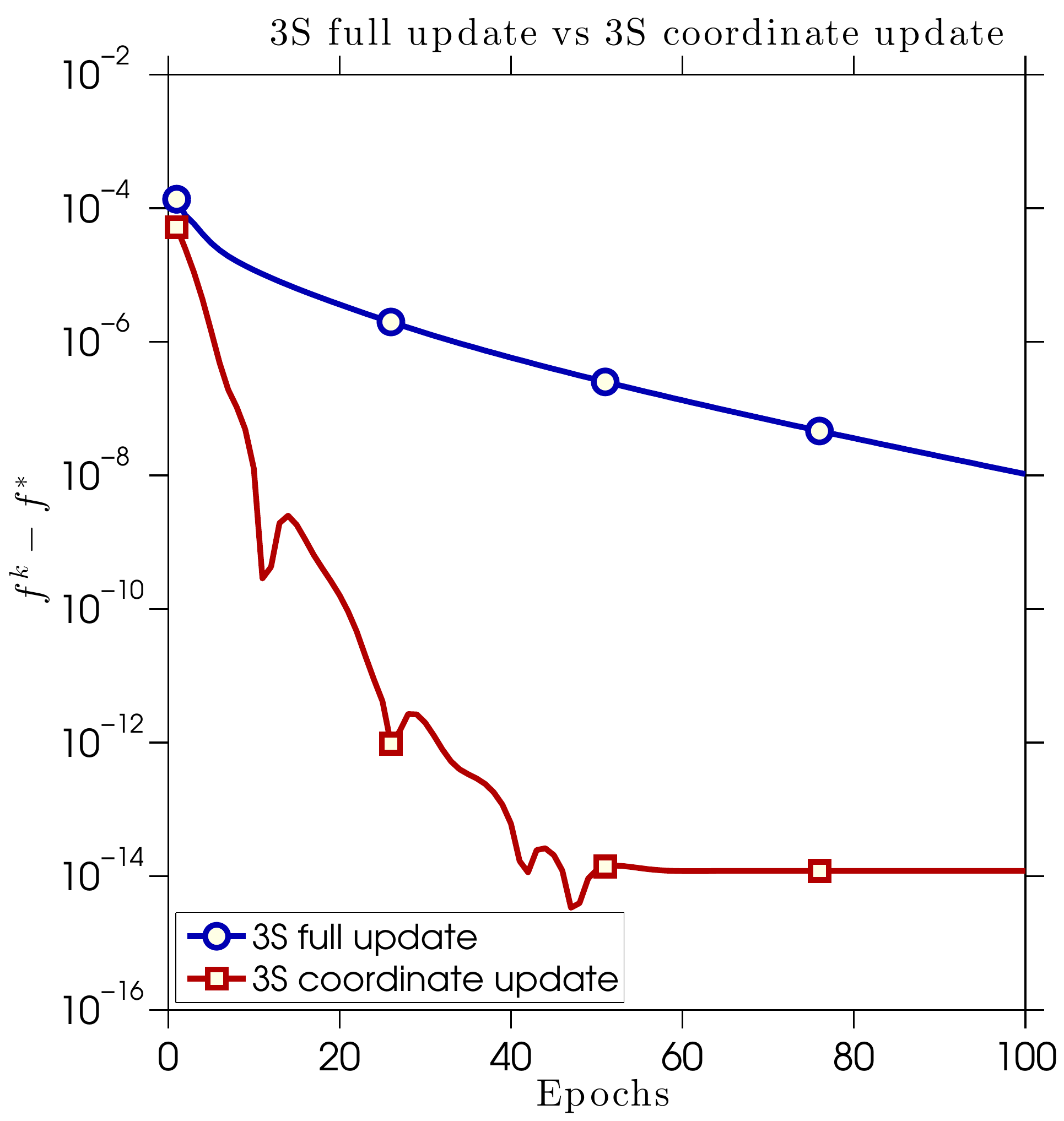}
        \caption{Synthesis dataset}
        \label{fig:3s_synth}
    \end{subfigure} %
    \quad
    \begin{subfigure}[b]{0.45\linewidth}
        \includegraphics[width=60mm]{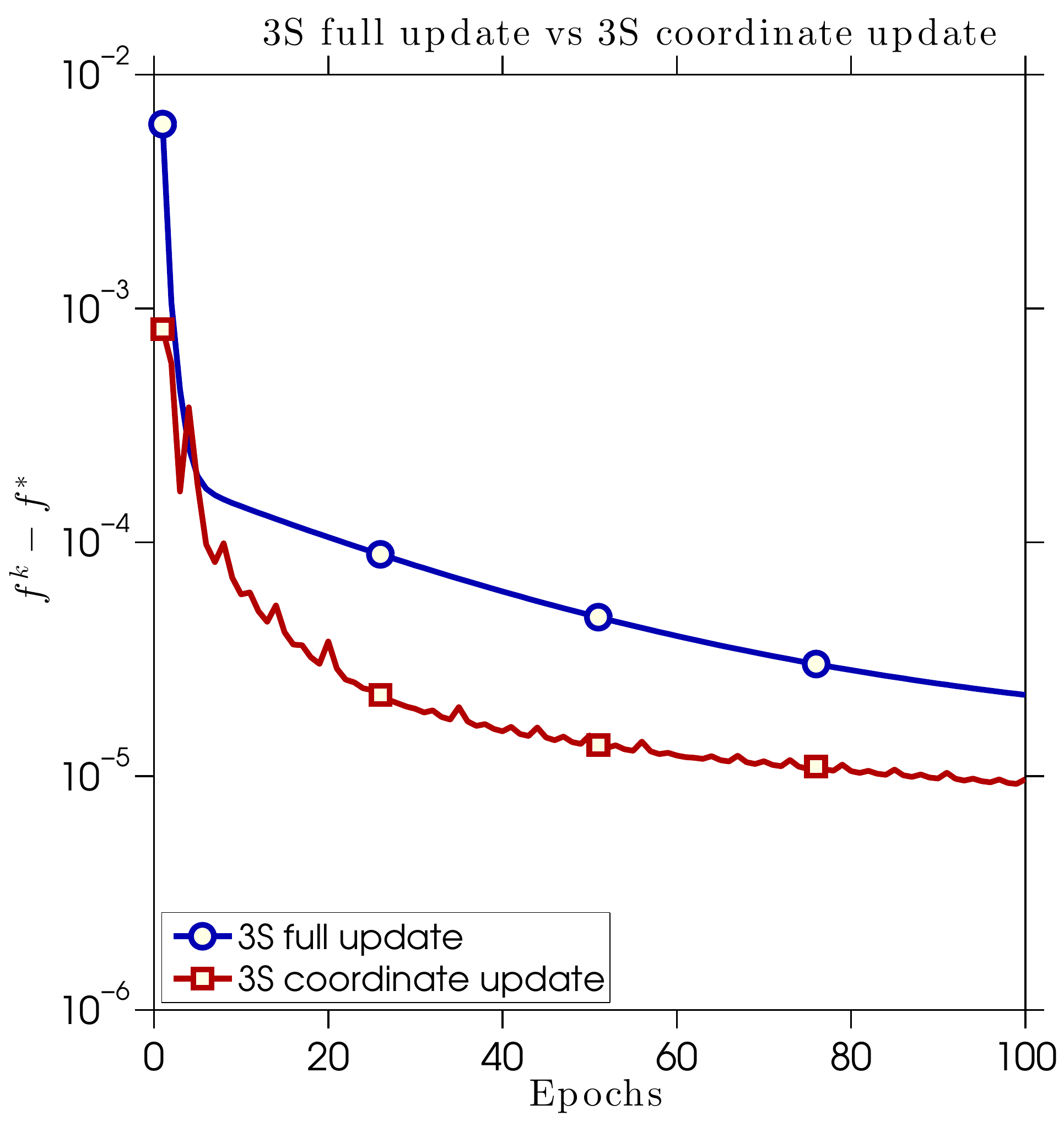}
        \caption{NASDAQ dataset}
        \label{fig:3s_real}
    \end{subfigure} %
    \caption{Compare the convergence of 3S full update with 3S coordinate update algorithms.}
    \label{fig:3s_results}
\end{figure}

\subsection{Computed Tomography Image Reconstruction}\label{sec:tv}
We compare the performance of algorithm~\eqref{eqn:pd_tvl2} and its corresponding coordinate version on Computed Tomography (CT) image reconstruction. We generate a thorax phantom of size $284\times 284$ to simulate spectral CT measurements.  We then apply the Siddon's algorithm~\cite{Siddon} to form the sinogram data. There are 90 parallel beam projections and, for each projection, there are 362 measurements. Then the sinogram data is corrupted with Gaussian noise. We formulate the image reconstruction problem in the form of~\eqref{eqn:tvl2}. The primal-dual full update corresponds to \eqref{eqn:pd_tvl2}. For coordinate update, the block size for $x$ is set to 284, which corresponds to a column of the image. The dual variables $s, t$ are also partitioned into 284 blocks accordingly. A block of $x$ and the corresponding blocks of $s$ and $t$ are bundled together as a single block. In each iteration, a bundled block is randomly chosen and updated. The reconstruction results are shown in Figure~\ref{fig:pds_results}. After 100 epochs, the image recovered by the coordinate version is better than that by~\eqref{eqn:pd_tvl2}. As shown in Figure~\ref{fig:pds_d}, the coordinate version converges faster than~\eqref{eqn:pd_tvl2}.


\begin{figure}[!htb] \centering
    \begin{subfigure}{0.45\linewidth}
        \centering
        \includegraphics[width=0.9\linewidth]{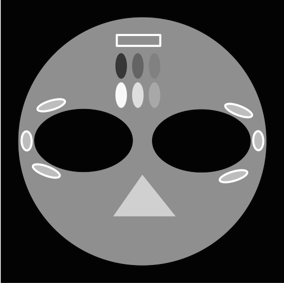}
         \caption{Phantom image}\label{fig:pds_a}               
    \end{subfigure} %
    \quad
    \begin{subfigure}{0.45\linewidth}
        \centering
        \includegraphics[width=0.9\linewidth]{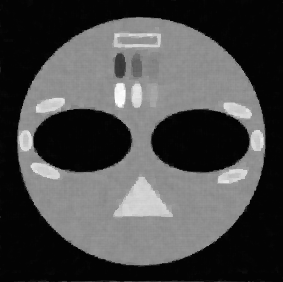}
         \caption{Recovered by PDS}\label{fig:pds_b}                    
    \end{subfigure} %
    \begin{subfigure}{0.45\linewidth}
        \centering
        \includegraphics[width=0.9\linewidth]{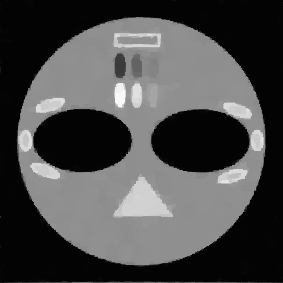}
         \caption{Recovered by PDS coord}\label{fig:pds_c}                      
    \end{subfigure} %
    \quad
    \begin{subfigure}{0.45\linewidth}
        \centering
        \includegraphics[width=\linewidth]{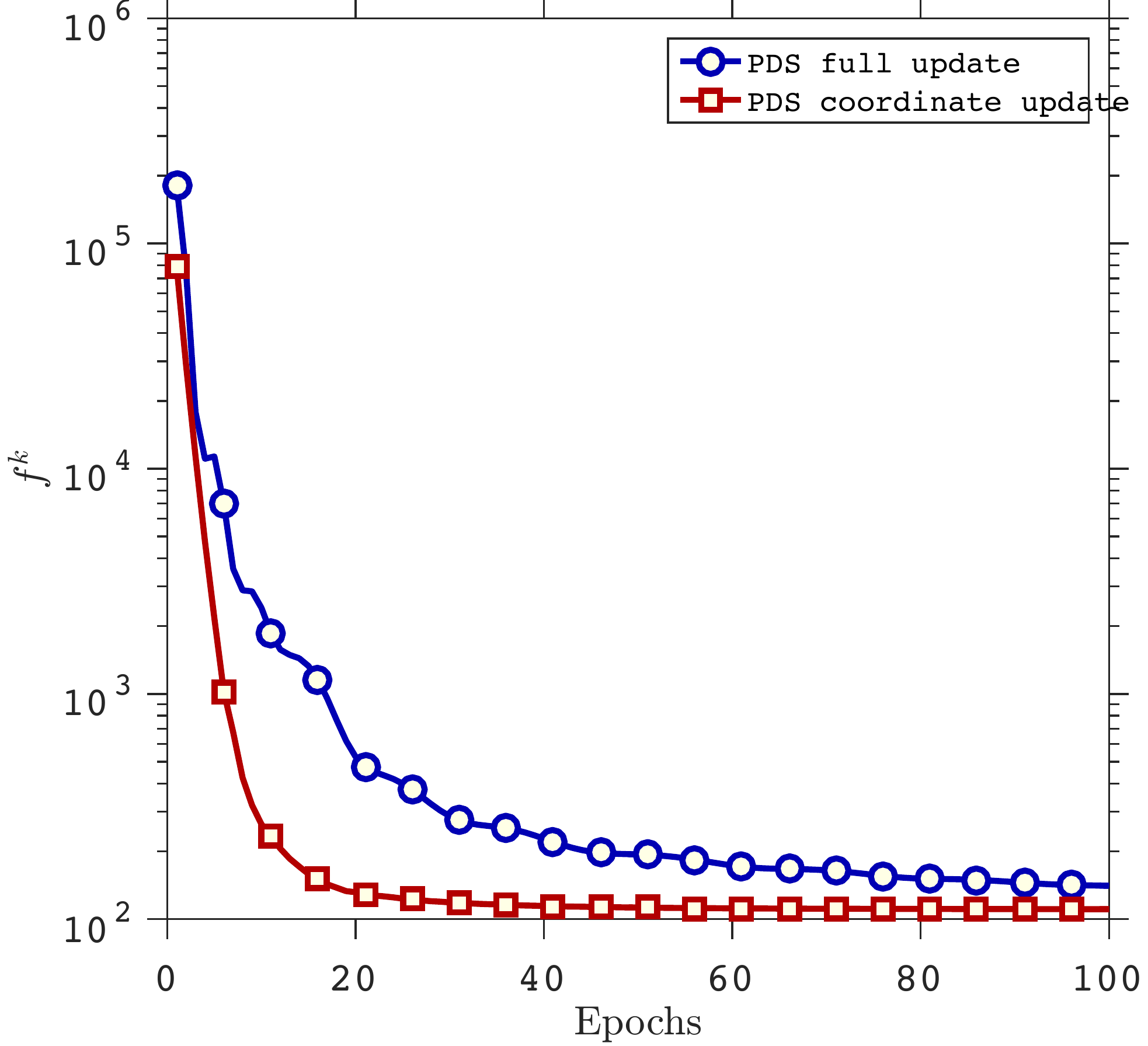}
         \caption{Objective function value}\label{fig:pds_d}                            
    \end{subfigure} %
    \caption{CT image reconstruction.}
    \label{fig:pds_results}
\end{figure}

\subsection{$\ell_1$ Regularized Logistic Regression}
In this subsection, we compare the performance of sync-parallel coordinate update and  async-parallel coordinate update for solving the sparse logistic regression problem 
\begin{equation}\label{eqn:log}
\Min_{x \in \mathbb{R}^n} \lambda \|x\|_1 + \frac{1}{N} \sum_{j=1}^N \log\big(1 + \exp(-b_j \cdot a_j^{\top} x)\big),
\end{equation}
where $\{(a_j, b_j)\}_{j=1}^N$ is the set of sample-label pairs with $b_j \in \{1, -1\}$, $\lambda=0.0001$, and $n$ and $N$ represent the numbers of features and samples, respectively. This test uses the datasets\footnote{\url{http://www.csie.ntu.edu.tw/~cjlin/libsvmtools/datasets/}}: real-sim and news20, which are summarized in Table \ref{tab:log_data}.

\begin{table}[htbp]
\centering
 \begin{tabular}{rrr}
\hline
  Name & \# samples & \# features \\
  \hline
 real-sim & 72, 309 & 20, 958 \\
  news20 & 19,996 & 1,355,191\\
  \hline
 \end{tabular}
  \caption{\label{tab:log_data}Two  datasets for  sparse logistic regression.}
\end{table}

We let each coordinate hold roughly 50 features. {Since the total number of features is not divisible by 50, some coordinates have 51 features.} We let each thread draw a coordinate uniformly at random at each iteration. We stop all the tests after 10 epochs since they have nearly identical progress per epoch. 
The step size is set to $\eta_k=0.9,\,\forall k$. Let $A = [a_1, \ldots, a_N]^{\top}$ and $b = [b_1, ..., b_N]^{\top}$. In global memory, we store $A, b$ and $x$. We also store the product $Ax$ in global memory so that the forward step can be efficiently computed. Whenever a coordinate of $x$ gets updated, $Ax$ is immediately updated at a low cost. Note that if $Ax$ is \emph{not} stored in global memory, every coordinate update will have to compute $Ax$ from scratch, which involves the entire $x$ and will be very expensive.  

Table \ref{tab:log_time} gives the running times of  the sync-parallel and async-parallel implementations on the two datasets. We can observe that async-parallel achieves almost-linear speedup, but sync-parallel scales very poorly as we explain below.

In the sync-parallel implementation,  all the running threads have to wait for the last thread to finish an iteration, and therefore if a thread has a large load, it slows down the iteration. Although every thread is (randomly) assigned to roughly the same number of features (either 50 or 51 components of $x$) at each iteration, their  $a_i$'s have very different numbers of nonzeros, and the thread with the largest number of nonzeros is the slowest. (Sparse matrix computation is used for both datasets, which are very large.) As more threads are used,  despite that they altogether do more work at each iteration,  the per-iteration time may increase as the slowest thread tends to be slower. On the other hand, async-parallel coordinate update does not suffer from the  load imbalance. Its performance grows nearly linear with the number of threads.

Finally, we have observed that the progress toward solving \eqref{eqn:log} is mainly a function of the number of epochs and does not change appreciably  when the number of threads increases or between sync-parallel and async-parallel. Therefore, we always stop at 10 epochs.

\begin{table}[htbp]
\centering
 \begin{tabular}{|c|r|r|r|r|r|r|r|r|}
  \hline
  \multirow{3}{*}{\# threads} & \multicolumn{4}{|c|}{real-sim} & \multicolumn{4}{c|}{news20} \\
  \cline{2-9}
  & \multicolumn{2}{|c|}{time (s)} &  \multicolumn{2}{c|}{speedup} &  \multicolumn{2}{c|}{time (s)} & \multicolumn{2}{c|}{speedup}\\
  \cline{2-9}
  & async & sync &  async & sync &  async & sync &  async & sync \\
  \hline
   1 &   81.6 &  82.1 & 1.0   & 1.0 & 591.1   & 591.3 & 1.0   & 1.0\\
   2 &   45.9   &  80.6 & 1.8   & 1.0 & 304.2   & 590.1 & 1.9   & 1.0\\
   4 &   21.6   &  63.0   & 3.8   & 1.3 & 150.4   & 557.0 & 3.9   & 1.1\\
   8 &   16.1   &  61.4   & 5.1   & 1.3 & 78.3     & 525.1 & 7.5   & 1.1\\
   16 & 7.1     &  46.4   & 11.5 & 1.8 & 41.6     & 493.2 & 14.2 & 1.2\\
  \hline
 \end{tabular}
 \caption{\label{tab:log_time}Running times of async-parallel and sync-parallel FBS implementations for $\ell_1$ regularized logistic regression on two datasets. Sync-parallel has very poor speedup  due to the large distribution of coordinate sparsity and thus the large load imbalance across threads.}
\end{table}

\section{Conclusions}
We have presented a coordinate update method for fixed-point iterations, which updates one coordinate (or a few variables) at every iteration and can be applied to solve linear systems,\cut{ partial differential equations,} optimization problems, saddle point problems, variational inequalities, and so on. We proposed a new concept called CF operator. When an operator is CF, its coordinate update is computationally worthy and often preferable over the full update method, in particular in a parallel computing setting. We gave examples of CF operators and also discussed how the properties can be preserved by composing two or more such operators such as in operator splitting and primal-dual splitting schemes. In addition, we have developed CF algorithms for problems arising in several different areas including machine learning, imaging, finance, and distributed computing. Numerical experiments on portfolio optimization, CT imaging, and logistic regression have been provided to demonstrate the superiority of CF methods over their counterparts that update all coordinates at every iteration.


\bibliographystyle{spmpsci}
\bibliography{asyn}

\appendix
\section{Some Key Concepts of Operators}\label{sec:op-concept}
In this section, we go over a few key concepts in monotone operator theory and operator splitting theory.
\cut{some of them and discuss when they are CF. First, 
}

\begin{definition}[monotone operator]\label{def:max-mon-op}
A \emph{set-valued} operator $\cT:\HH\rightrightarrows\HH$ is \emph{monotone} if
$\langle x-y, u-v\rangle\ge 0,\ \forall x,y\in\HH,\, u\in \cT x,\, v\in\cT y.$
Furthermore, $\cT$ is \emph{maximally monotone} if its graph $\Grph(T)=\{(x,u)\in \HH\times\HH: u\in \cT x\}$ is not strictly contained in the graph of any other monotone operator. 
\end{definition}

\begin{example}\label{exmp:mon-op}
An important maximally monotone operator is the subdifferential $\partial f$ of a closed proper convex function $f$.
\end{example}

\begin{definition}[nonexpansive operator]
An operator $\cT:\HH\to\HH$ is \emph{nonexpansive} if
$\|\cT x-\cT y\|\le\|x-y\|,\ \forall x, y\in\HH.$ We say $\cT$ is averaged, or $\alpha$-averaged, if there is one nonexpansive operator $\cR$ such that $\cT=(1-\alpha)\cI+\alpha\cR$ for some $0<\alpha<1$.  A $\frac{1}{2}$-averaged operator $\cT$ is also called \emph{firmly-nonexpansive}.
\end{definition}
By definition, a nonexpansive operator is single-valued. Let $\cT$ be averaged. If $\cT$ has a fixed point, the iteration \eqref{fpi} converges to a fixed point; otherwise, the iteration diverges unboundedly. Now let  $\cT$ be nonexpansive. The damped update of $\cT$: $x^{k+1} = x^k-\eta(x^k- \cT x^k)$, is equivalent to applying the averaged operator $(1-\eta)\cI+\eta\cT$. 

\begin{example}
A common firmly-nonexpansive operator is the resolvent of a maximally monotone map $\cT$, written as 
\begin{equation}\label{def-resolvent}\cJ_\cA := (\cI+\cA)^{-1}.
\end{equation} Given $x\in\HH$, $\cJ_\cA(x) =  \{y:x\in y+\cA y\}$. (By monotonicity of $\cA$, $\cJ_\cA$ is a singleton, and by maximality of $\cA$, $\cJ_\cA(x)$ is well defined for all $x\in\HH$. ) A reflective resolvent is \begin{equation}\label{def-ref}\cR_{\cA}:= 2\cJ_\cA-\cI.
\end{equation}
\end{example}

\begin{definition}[proximal map]\label{def-prox-map}
The proximal map for a function $f$ is a special resolvent defined as:
\begin{equation}\label{def-prox}\prox_{\gamma f}(y) = \argmin_x \big\{f(x)+\frac{1}{2\gamma}\|x-y\|^2 \big\},
\end{equation}
where $\gamma > 0$. The first-order variational condition of the minimization yields $\prox_{\gamma f}(y)=(\cI+\gamma\partial f)^{-1}$; hence, $\prox_{\gamma f}$ is firmly-nonexpansive. When $x\in\RR^m$ and $\prox_{\gamma f}$ can be computed in $O(m)$ or $O(m\log m)$ operations, we call $f$ \emph{proximable}.

Examples of proximable functions include $\ell_1,\ell_2,\ell_\infty$-norms, several matrix norms, the owl-norm \cite{davis2015n}, (piece-wise) linear functions, certain quadratic functions, and many more.
\end{definition}

\begin{example}
 A special proximal map is the projection map. Let $X$ be a nonempty closed convex set, and $\iota_S$ be its indicator function. Minimizing $\iota_S(x)$  enforces $x\in S$,  so $\prox_{\gamma \iota_S}$ reduces to the projection map $\prj_{S}$ for any $\gamma>0$. Therefore, $\prj_{S}$ is also firmly nonexpansive.
 \end{example}

\cut{A firmly nonexpansive operator is always nonexpansive and maximally monotone \cite[Example 20.27]{bauschke2011convex}. }

\begin{definition}[$\beta$-cocoercive operator]
An operator $\cT:\HH\to\HH$ is \emph{$\beta$-cocoercive} if 
$\langle x-y, \cT x-\cT y\rangle \ge \beta \|\cT x-\cT y\|^2,\ \forall x,y\in\HH.$ 
\end{definition}

\begin{example}
A special example of  cocoercive operator is the gradient of a smooth function. Let $f$ be a differentiable function. Then $\nabla f$ is $\beta$-Lipschitz continuous \emph{if and only if} $\nabla f$ is $\frac{1}{\beta}$-cocoercive \cite[Corollary 18.16]{bauschke2011convex}. \cut{If $\cT$ is $\beta$-cocoercive, $\beta \cT$ must be firmly nonexpansive \cite[Remark 4.24]{bauschke2011convex}, and $\cT$ is maximally monotone.}
\end{example}
\cut{
At last we give a counterexample to show naively extending existing algorithms to coordinate update schemes may result in divergence or wrong solutions.
\begin{example}
Consider the problem:
\begin{equation}
\begin{array}{l}
\underset{x_1,x_2\in\RR^m}{\textnormal{minimize  }} ~f(x_1)+g(x_2)\\
\textnormal{subject to } x_1-x_2=0,
\end{array}\label{c-e}
\end{equation}
We define $x=\begin{bmatrix}
x_1\\
x_2
\end{bmatrix},A=[I_m ~-I_m],F(x)=f(x_1)+g(x_2)$. $\eqref{c-e}$ can be solved by using $\eqref{pdemp}$:
\begin{equation}
\left\{
\begin{array}{l}
s^{k+1}=s^k+\gamma Ax^k\\
x^{k+1}=\prox_{\eta F}(x^k-\eta(A^\top s^k+2\gamma A^\top Ax^k))
\end{array}\label{c-efull}
\right.
\end{equation}
Where $s^k\in \RR^m,k=1,2,\ldots$ is a dual variable.
Defining $t^k=A^\top s^k$, $\eqref{c-efull}$ can be written as
\begin{equation}
\left\{
\begin{array}{l}
t^{k+1}=t^k+\gamma A^\top Ax^k\\
x^{k+1}=\prox_{\eta F}(x^k-\eta(t^k+2\gamma A^\top Ax^k))
\end{array}\label{c-efull2}
\right.
\end{equation}
Both $\eqref{c-efull}$ and $\eqref{c-efull2}$ (with $t^0=A^\top s^0$) converge in practice and so does coordinate update versions of $\eqref{c-efull}$. But our experiments show that coordinate update versions of $\eqref{c-efull2}$ do not converge to the solution of problem $\eqref{c-e}$. A key factor is that with full update versions of $\eqref{c-efull}$ and $\eqref{c-efull2}$, as well as coordinate update versions of $\eqref{c-efull}$, $t^k$ always stay in the image of $A^\top $. But with coordinate update versions of $\eqref{c-efull2}$, $t^k$ do not always stay in the image of $A^\top $, which causes trouble.
\end{example}
}
\section{Derivation of ADMM from the DRS Update}\label{sec:drs-admm}
We derive the ADMM update in \eqref{eq:admmx-y} from the DRS update \begin{subequations}\label{eqop}
\begin{align}
s^k=&\ \cJ_{\eta \cB}(t^k),\label{eqop-s}\\
t^{k+1}=&\ \left(\frac{1}{2}(2\cJ_{\eta\cA}-\cI)\circ(2\cJ_{\eta \cB}-\cI)+\frac{1}{2}\cI\right)(t^k)\label{eqop-t},
\end{align}
\end{subequations} 
where $\cA=-\partial f^*(-\cdot)$ and $\cB=\partial g^*$.

Note \eqref{eqop-s} is equivalent to $t^k\in s^k+\eta \partial g^*(s^k)$, i.e., there is a $y^k\in \partial g^*(s^k)$ such that $t^k = s^k+\eta y^k$, so
\begin{equation}\label{tempeq1}
t^k-\eta y^k=s^k\in\partial g(y^k).
\end{equation} 
In addition, \eqref{eqop-t} can be written as
\begin{align}
t^{k+1}=&\ \cJ_{\eta\cA}(2s^k-t^k)+t^k-s^k\cr
=&\ s^k+(\cJ_{\eta\cA}-\cI)(2s^k-t^k)\cr
=&\ s^k+(\cI-(\cI+\eta\partial f^*)^{-1})(t^k-2s^k)\cr
=&\ s^k+\eta(\eta\cI+\partial f)^{-1}(t^k-2s^k)\cr
=&\ s^k+\eta(\eta\cI+\partial f)^{-1}(\eta y^k-s^k),\label{tempeq2}
\end{align} 
where in the fourth equality, we have used the Moreau's Identity \cite{rockafellar1997convex}: $(\cI+\partial h)^{-1}+(\cI+\partial h^*)^{-1}=\cI$ for any closed convex function $h$. Let
\begin{equation}\label{up-x}
x^{k+1}=(\eta\cI+\partial f)^{-1}(\eta y^k-s^k)=(\cI+\frac{1}{\eta}\partial f)^{-1}(y^k-\frac{1}{\eta}s^k).
\end{equation}
Then \eqref{tempeq2} becomes
\begin{equation*}
t^{k+1}=s^k+\eta x^{k+1},
\end{equation*}
and 
\begin{equation}\label{up-s}
s^{k+1}\overset{\eqref{tempeq1}}=t^{k+1}-\eta y^{k+1}=s^k+\eta x^{k+1}-\eta y^{k+1},
\end{equation}
which together with $s^{k+1}\in\partial g(y^{k+1})$ gives
\begin{equation}\label{up-y}
y^{k+1}=(\eta\cI+\partial g)^{-1}(s^k+\eta x^{k+1})=(\cI+\frac{1}{\eta}\partial g)^{-1}(x^{k+1}+\frac{1}{\eta}s^k).
\end{equation}
Hence, from \eqref{up-x}, \eqref{up-s}, and \eqref{up-y}, the ADMM update in \eqref{eq:admmx-y} is equivalent to the DRS update in \eqref{eqop} with $\eta=\frac{1}{\gamma}$.
\section{Representing the Condat-V\~{u} Algorithm as a Nonexpansive Operator}\label{sec:vc-op}
We show how to derive the Condat-V\~{u} algorithm $\eqref{vucondat}$ by applying a forward-backward operator to the optimality condition $\eqref{pdkkt}$:

\begin{equation}
0\in\bigg(\underbrace{\begin{bmatrix}
\nabla f & 0\\
0 & 0
\end{bmatrix}}_{\mbox{operator}~\cA}+\underbrace{
\begin{bmatrix}
\partial g & 0 \\
0 & \partial h^*
\end{bmatrix}+\begin{bmatrix}
0&A^\top\\
-A&0
\end{bmatrix}}_{\mbox{operator}~\cB}\bigg) \underbrace{\begin{bmatrix}
x\\
s
\end{bmatrix}}_{z},
\end{equation}

It can be written as $0\in\cA z+\cB z$ after we define $z=\begin{bmatrix}x\\ s\end{bmatrix}$. Let $M$ be a symmetric positive definite matrix, we have
\begin{align*}
&0\in\cA z+\cB z\\
\Leftrightarrow& Mz-\cA z \in Mz+\cB z\\
\Leftrightarrow& z-M^{-1}\cA z \in z+M^{-1}\cB z\\
\Leftrightarrow& z=(\cI+M^{-1}\cB)^{-1}\circ(\cI-M^{-1}\cA)z.
\end{align*}
Convergence and other results can be found in \cite{davis2014convergence}. The last equivalent relation is due to $M^{-1}\cB$ being a maximally monotone operator under the norm induced by $M$. We let $$M=\begin{bmatrix}
\frac{1}{\eta}I&A^\top\\
A&\frac{1}{\gamma}I
\end{bmatrix}\succ 0$$
and iterate $$z^{k+1}=\cT z^k=(\cI+M^{-1}\cB)^{-1}\circ(\cI-M^{-1}\cA)z^k.$$
We have $Mz^{k+1}+{\cB}z^{k+1}= Mz^k-\cA z^k$:
$$\left\{\begin{array}{ll}
\frac{1}{\eta}x^k  +A^\top s^{k}- \nabla f(x^k)&\in \frac{1}{\eta}x^{k+1} + A^\top s^{k+1}+A^\top s^{k+1}+\partial g (x^{k+1}),\\ 
\frac{1}{\gamma}s^k + A~ x^{k~}&\in \frac{1}{\gamma}s^{k+1}+ A~ x^{k+1~} -A ~x^{k+1} ~+\partial h^*(s^{k+1}),
\end{array}\right.$$
which is equivalent to
$$\left\{
\begin{array}{l}
s^{k+1}=\prox_{\gamma h^*} (s^k+\gamma Ax^k),\\
x^{k+1}=\prox_{\eta g}(x^k-\eta(\nabla f(x^k)+A^\top(2s^{k+1}-s^k))).
\end{array}
\right.$$
Now we derived the Condat-V\~{u} algorithm. With proper choices of $\eta$ and $\gamma $, the forward-backward operator $\cT=(\cI+M^{-1}\cB)^{-1}\circ(\cI-M^{-1}\cA)$ can be shown to be $\alpha$-averaged if we use the inner product $\langle z_1,z_2\rangle_M=z_1^\top Mz_2$ and norm $\|z\|_M=\sqrt{z^\top Mz}$ on the space of $z=\begin{bmatrix}x\\ s\end{bmatrix}$. More details can be found in~\cite{davis2014convergence}.

If we change the matrix $M$ to $\begin{bmatrix}
\frac{1}{\eta}I&-A^\top\\
-A&\frac{1}{\gamma}I
\end{bmatrix}$, the other algorithm $\eqref{vucondat2}$ can be derived similarly.

\section{Proof of Convergence for Async-parallel Primal-dual Coordinate Update Algorithms}\label{pf:pdasync}
Algorithms~\ref{alg:asyn_core} and~\ref{alg:asyn_overlap} differ from that in~\cite{Peng_2015_AROCK} in the following aspects:
\begin{enumerate}
\item the operator $\TVC$ is nonexpansive under a norm induced by a symmetric positive definite matrix $M$ (see Appendix~\ref{sec:vc-op}), instead of the standard Euclidean norm;
\item the coordinate updates are no longer orthogonal to each other under the norm induced by $M$;
\item the block coordinates may overlap each other.
\end{enumerate}
Because of these differences, we make two major modifications to the proof in~\cite[Section 3]{Peng_2015_AROCK}: (i) adjusting parameters in~\cite[Lemma 2]{Peng_2015_AROCK} and modify its proof to accommodate for the new norm; (2) modify the inner product and induced norm used in \cite[Theorem 2]{Peng_2015_AROCK} and adjust the constants in \cite[Theorems 2 and 3]{Peng_2015_AROCK}.

We assume the same inconsistent case as in~\cite{Peng_2015_AROCK}, i.e., the relationship between $\hat z^k$ and $z^k$ is
\begin{equation}
\hat{z}^k=z^k+\sum_{d\in J(k)}(z^d-z^{d+1}),
\end{equation}
where $J(k)\subseteq \{k-1,...,k-\tau\}$ and $\tau$ is the maximum number of other updates to $z$ during the computation of the update. 
Let $\cS=\cI-\TVC$. 
Then the coordinate update can be rewritten as $z^{k+1}=z^k-\frac{\eta_k}{(m+p)q_{i_k}}\cS_{i_k}\hat{z}^k$, where $\cS_{i}\hat{z}^k=(\hat z_1^k,\dots, \hat z_{i-1}^k, (\cS\hat z^k)_i, \hat z_{i+1}^k,\dots,\hat z_{m+p}^k)$  for Algorithm~\ref{alg:asyn_core}. For Algorithm~\ref{alg:asyn_overlap}, the update is  
\begin{align}\label{eqn:asyn_update_a2}
z^{k+1}=z^k-\frac{\eta_k}{mq_{i_k}}\cS_{i_k}\hat{z}^k,
\end{align} where 
$$\cS_{i}\hat{z}^k=\begin{bmatrix}
0&&&&&&&&&\\
&\ddots&&&&&&&&\\
&&0&&&&&&&\\
&&&\cI_{\HH_i}&&&&&&\\
&&&&0&&&&&\\
&&&&&\ddots&&&&\\
&&&&&&0&&&\\
&&&&&&&\rho_{i,1}\cI_{\GG_1}&&\\
&&&&&&&&\ddots&\\
&&&&&&&&&\rho_{i,p}\cI_{\GG_p}
\end{bmatrix}\cS\hat{z}^k.$$
Let $\lambda_{\max}$ and $\lambda_{\min}$ be the
maximal and minimal eigenvalues of the matrix $M$, respectively, and $\kappa=\frac{\lambda_{\max}}{\lambda_{\min}}$ be the condition number.
Then we have the following lemma.
\begin{lemma}
For both Algorithms~\ref{alg:asyn_core} and~\ref{alg:asyn_overlap},
\begin{align}
\sum_{i}\cS_i\hat{z}^k&=\cS\hat{z}^k,\\
\sum_{i}\|\cS_i\hat{z}^k\|_M^2&\leq \kappa\|\cS\hat{z}^k\|_M^2,\label{eqn:bound_lemma1}
\end{align}
where $i$ runs from $1$ to $m+p$ for Algorithm~\ref{alg:asyn_core} and $1$ to $m$ for Algorithm~\ref{alg:asyn_overlap}.
\end{lemma}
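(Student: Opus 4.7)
The two identities are verified separately for the two algorithms, but both rely on the same conversion between the standard Euclidean norm and the $M$-norm, controlled by $\kappa = \lambda_{\max}/\lambda_{\min}$. I will first establish them in the standard inner product, then pass to the $M$-norm.

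For Algorithm \ref{alg:asyn_core}, the vector $\cS_i \hat z^k$ is supported on the $i$th coordinate block with that block equal to $(\cS\hat z^k)_i$. Summing over $i \in [m+p]$ just reassembles all the blocks, giving $\sum_i \cS_i\hat z^k = \cS\hat z^k$. For Algorithm \ref{alg:asyn_overlap}, the block matrix displayed just above the lemma shows that $\cS_i\hat z^k$ keeps the $x_i$-block of $\cS\hat z^k$ and multiplies the $s_j$-block by $\rho_{i,j}$ (with $\rho_{i,j}=0$ for $j\notin \JJ(i)$). Summing the $x$-blocks over $i\in[m]$ trivially recovers the primal part, while for each dual block $j$,
\[
\sum_{i=1}^m \rho_{i,j}(\cS\hat z^k)_{s_j} \;=\; \Big(\sum_{i\in\II(j)}\rho_{i,j}\Big)(\cS\hat z^k)_{s_j} \;=\; (\cS\hat z^k)_{s_j},
\]
by the defining condition $\sum_{i\in\II(j)}\rho_{i,j}=1$. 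This proves the first identity in both cases.

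For the $M$-norm inequality, I will first bound the standard Euclidean sum and then use the two-sided estimate $\lambda_{\min}\|v\|^2 \le \|v\|_M^2 \le \lambda_{\max}\|v\|^2$. In Algorithm \ref{alg:asyn_core}, the vectors $\{\cS_i\hat z^k\}_i$ have pairwise disjoint coordinate supports, so $\sum_i \|\cS_i\hat z^k\|^2 = \|\cS\hat z^k\|^2$. In Algorithm \ref{alg:asyn_overlap}, expanding block-by-block,
\[
\sum_{i=1}^m\|\cS_i\hat z^k\|^2 \;=\; \sum_{i=1}^m \|(\cS\hat z^k)_{x_i}\|^2 + \sum_{j=1}^p \Big(\sum_{i=1}^m \rho_{i,j}^2\Big)\|(\cS\hat z^k)_{s_j}\|^2 \;\le\; \|\cS\hat z^k\|^2,
\]
where the inequality uses the key fact $\sum_i \rho_{i,j}^2 \le \sum_i \rho_{i,j} = 1$, valid because each $\rho_{i,j}\in[0,1]$.

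Combining these with the norm-equivalence bounds yields
\[
\sum_i \|\cS_i\hat z^k\|_M^2 \;\le\; \lambda_{\max}\sum_i \|\cS_i\hat z^k\|^2 \;\le\; \lambda_{\max}\|\cS\hat z^k\|^2 \;\le\; \frac{\lambda_{\max}}{\lambda_{\min}}\|\cS\hat z^k\|_M^2 \;=\; \kappa\|\cS\hat z^k\|_M^2.
\]
The main subtlety is the inequality $\sum_i\rho_{i,j}^2\le 1$ for the overlapping-block case, which crucially exploits both $\rho_{i,j}\in[0,1]$ and the convex-combination property; this is the only place where the relaxation parameters must satisfy the standing assumption. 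Everything else is bookkeeping with principal submatrices of $M$ (which, by Cauchy interlacing, inherit the eigenvalue bounds needed to pass between $\|\cdot\|$ and $\|\cdot\|_M$ on coordinate-supported vectors).
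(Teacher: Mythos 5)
Your proof is correct and follows essentially the same route as the paper: the first identity from the definition of $\cS_i$ (using $\sum_{i\in\II(j)}\rho_{i,j}=1$ in the overlapping case), and the second via the chain $\sum_i\|\cS_i\hat z^k\|_M^2\le\lambda_{\max}\sum_i\|\cS_i\hat z^k\|^2\le\lambda_{\max}\|\cS\hat z^k\|^2\le\kappa\|\cS\hat z^k\|_M^2$. You simply make explicit the step the paper leaves implicit for Algorithm~\ref{alg:asyn_overlap}, namely $\sum_i\rho_{i,j}^2\le\sum_i\rho_{i,j}=1$, which is exactly the right justification for replacing the equality by ``$\leq$''.
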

\begin{proof} The first part comes immediately from the definition of $\cS$ for both algorithms. For the second part, we have
\begin{align}
\sum_{i}\|\cS_i\hat{z}^k\|_M^2&\leq \sum_{i} \lambda_{\max}\|\cS_i\hat{z}^k\|^2 = \lambda_{\max}\|\cS\hat{z}^k\|^2\leq {\lambda_{\max}\over\lambda_{\min}}\|\cS\hat{z}^k\|^2_M,
\end{align}
for Algorithm~\ref{alg:asyn_core}. For Algorithm~\ref{alg:asyn_overlap}, the equality is replaced by ``$\leq$".
\end{proof}

At last we define
\begin{align}\label{eqn:def_bar_x}
\bar{z}^{k+1} := z^k - \eta_k \cS\hat{z}^{k},
\end{align}
$q_{\min}=\min_iq_i>0$, and $|J(k)|$ be the number of elements in $J(k)$. 
It is shown in~\cite{davis2014convergence} that with proper choices of $\eta$ and $\gamma$, $\TVC$ is nonexpansive under the norm induced by $M$. Then Lemma~\ref{lemma:a-avg} shows that $\cS$ is 1/2-cocoercive under the same norm.
\begin{lemma}\label{lemma:a-avg} An operator $\cT:\FF\to\FF$ is nonexpansive under the induced norm by $M$ if and only if $\cS = \cI - \cT$ is {${1}/{2}$-cocoercive} under the same norm, i.e.,
\begin{equation}\label{eqn:alpha_avg}
\langle z-\tilde{z},\cS z- \cS\tilde{z}\rangle_M \ge \frac{1}{2}\|\cS z- \cS\tilde{z}\|_M^2,\quad
{\forall~z,\tilde{z}\in\FF}.
\end{equation}
\end{lemma}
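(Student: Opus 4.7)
The plan is to prove the equivalence by a direct algebraic identity that relates $\|\cT z - \cT\tilde z\|_M^2$ to $\|z-\tilde z\|_M^2$, $\langle z-\tilde z, \cS z-\cS\tilde z\rangle_M$, and $\|\cS z-\cS\tilde z\|_M^2$. Since $\cS = \cI-\cT$, we have $\cT z - \cT\tilde z = (z-\tilde z) - (\cS z - \cS\tilde z)$, so by expanding the $M$-norm using bilinearity of $\langle\cdot,\cdot\rangle_M$ I obtain
\begin{equation*}
\|\cT z - \cT\tilde z\|_M^2 = \|z-\tilde z\|_M^2 - 2\langle z-\tilde z,\cS z-\cS\tilde z\rangle_M + \|\cS z-\cS\tilde z\|_M^2.
\end{equation*}
This identity is the only real ingredient; it uses nothing beyond the fact that $M$ is symmetric positive definite, so $\langle\cdot,\cdot\rangle_M$ is a genuine inner product on $\FF$.

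From here the two implications follow immediately by rearrangement. For the ``only if'' direction, if $\cT$ is nonexpansive in the $M$-norm then $\|\cT z - \cT\tilde z\|_M^2 \le \|z-\tilde z\|_M^2$, and plugging into the identity and cancelling $\|z-\tilde z\|_M^2$ on both sides yields
\begin{equation*}
\|\cS z-\cS\tilde z\|_M^2 \le 2\langle z-\tilde z,\cS z-\cS\tilde z\rangle_M,
\end{equation*}
which is \eqref{eqn:alpha_avg}. For the ``if'' direction, starting from \eqref{eqn:alpha_avg} the same rearrangement gives $\|\cT z - \cT\tilde z\|_M^2 \le \|z-\tilde z\|_M^2$, i.e., $\cT$ is $M$-nonexpansive.

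There is no real obstacle here: the only thing to verify is that $\langle\cdot,\cdot\rangle_M$ behaves as an inner product (symmetry, bilinearity, positive definiteness) so that the expansion of $\|u-v\|_M^2 = \|u\|_M^2 - 2\langle u,v\rangle_M + \|v\|_M^2$ is valid, and this is given by the assumption that $M$ is symmetric positive definite as established in Appendix~\ref{sec:vc-op}. I would write the proof in two or three short displayed equations, explicitly noting the $\iff$ chain so that both directions are read off the same identity.
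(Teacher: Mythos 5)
Your proof is correct and is essentially the argument the paper relies on: the paper simply defers to \cite[Proposition 4.33]{bauschke2011convex}, whose proof is exactly this polarization identity $\|\cT z-\cT\tilde z\|_M^2=\|z-\tilde z\|_M^2-2\langle z-\tilde z,\cS z-\cS\tilde z\rangle_M+\|\cS z-\cS\tilde z\|_M^2$ followed by rearrangement, valid because $\langle\cdot,\cdot\rangle_M$ is a genuine inner product. No gaps; both directions are read off the same identity as you state.
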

The proof is the same as that of \cite[Proposition 4.33]{bauschke2011convex}. 

We state the complete theorem for Algorithm~\ref{alg:asyn_overlap}. The theorem for Algorithm~\ref{alg:asyn_core} is similar (we need to change $m$ to $m+p$ when necessary).
\begin{thm}\label{thm:async-convergence2}
Let $Z^*$ be the set of optimal solutions of~\eqref{pdproblem} and $(z^k)_{k\geq0}\subset \FF$ be the sequence generated by Algorithm~\ref{alg:asyn_overlap} (with proper choices of $\eta$ and $\gamma$ such that $\TVC$ is nonexpansive under the norm induced by $M$), under the following conditions:
\begin{enumerate}[(i)]
\item $f,g,h^*$ are closed proper convex functions. In addition, $f$ is differentiable and $\nabla f$ is Lipschitz continuous with $\beta$;
\item $\eta_k
\in [\eta_{\min}, \eta_{\max}]$ for certain $0<\eta_{\max}<\frac{mq_{\min}}{2\tau
\sqrt{\kappa q_{\min}}+\kappa}$ and any $0<\eta_{\min}\leq\eta_{\max}$.
\end{enumerate}
Then $(z^k)_{k\geq 0}$ converges to a $Z^*$-valued random variable with probability 1.
\end{thm}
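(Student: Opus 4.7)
The plan is to adapt the framework of~\cite{Peng_2015_AROCK} to the $M$-induced geometry. The starting point is Lemma~\ref{lemma:a-avg}, which gives that $\cS = \cI - \TVC$ is $\tfrac{1}{2}$-cocoercive in $\|\cdot\|_M$. Fix $z^* \in Z^*$ so $\cS z^* = 0$, and let $\cF^k = \sigma(z^0,\dots,z^k)$. Conditioning on $\cF^k$ and using the random choice of $i_k$ with probability $q_{i_k}$ in~\eqref{eqn:asyn_update_a2} together with the bounds in~\eqref{eqn:bound_lemma1}, I would first derive the two identities
\[
\EE[z^{k+1} - z^k \mid \cF^k] = -\frac{\eta_k}{m}\,\cS\hat{z}^k, \qquad \EE[\|z^{k+1} - z^k\|_M^2 \mid \cF^k] \le \frac{\eta_k^2\,\kappa}{m^2\,q_{\min}}\,\|\cS\hat{z}^k\|_M^2.
\]

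Next, expanding $\|z^{k+1}-z^*\|_M^2$ and taking conditional expectation gives
\[
\EE[\|z^{k+1}-z^*\|_M^2 \mid \cF^k] = \|z^k-z^*\|_M^2 - \frac{2\eta_k}{m}\langle z^k-z^*,\,\cS\hat{z}^k\rangle_M + \EE[\|z^{k+1}-z^k\|_M^2\mid\cF^k].
\]
Cocoercivity naturally applies at $\hat{z}^k$ rather than $z^k$, so the cross term must be rewritten using the identity $z^k - \hat{z}^k = -\sum_{d\in J(k)}(z^d - z^{d+1})$ and then controlled by Young's inequality, producing error terms of the form $\sum_{d=k-\tau}^{k-1}\|z^{d+1}-z^d\|_M^2$. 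This forces the analysis to work with the Lyapunov function
\[
V^k := \|z^k-z^*\|_M^2 + c\sum_{d=(k-\tau)_+}^{k-1}\big(d-(k-\tau)+1\big)\|z^{d+1}-z^d\|_M^2
\]
for a suitable $c>0$, mimicking the telescoping construction in~\cite[Lemma 2, Theorem 2]{Peng_2015_AROCK}.

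The main obstacle is calibrating the constants so that a supermartingale inequality
\[
\EE[V^{k+1}\mid\cF^k] \le V^k - \alpha_k\,\|\cS\hat{z}^k\|_M^2
\]
holds with $\alpha_k>0$ uniformly for $\eta_k\in[\eta_{\min},\eta_{\max}]$. The condition number $\kappa$ enters through~\eqref{eqn:bound_lemma1}, a factor $\sqrt{\kappa/q_{\min}}$ arises when applying Cauchy--Schwarz to the inconsistent-read correction (this is where the non-orthogonality of overlapping blocks in the $M$-norm is felt), and the delay $\tau$ contributes linearly to the cross terms. Balancing all three tightly is what produces the step-size bound $\eta_{\max} < \tfrac{m q_{\min}}{2\tau\sqrt{\kappa q_{\min}}+\kappa}$ stated in the hypothesis.

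Once the supermartingale property is in hand, the Robbins--Siegmund theorem yields $V^k$ converges a.s.\ and $\sum_k \alpha_k\|\cS\hat{z}^k\|_M^2 < \infty$ a.s., hence $\cS\hat{z}^k\to 0$ and $\|z^{k+1}-z^k\|_M\to 0$ almost surely. From the delay bound $|J(k)|\le\tau$, this also implies $\|z^k-\hat{z}^k\|_M\to 0$ and therefore $\cS z^k\to 0$ by continuity of $\cS$. Passing to any cluster point of $\{z^k\}$ and using $\Fix\TVC = Z^*$ (which in turn is the zero set of $\cS$), together with the stochastic Fej\'er monotonicity of $\|z^k-z^*\|_M^2$ for every $z^*\in Z^*$ inherited from the supermartingale, a standard Opial-style argument (as in~\cite[Theorem 3]{Peng_2015_AROCK}) upgrades the subsequential statement to almost-sure convergence of the whole sequence to a single $Z^*$-valued random variable, completing the proof.
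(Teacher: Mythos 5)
Your proposal is correct and follows essentially the same route as the paper: the $M$-norm cocoercivity of $\cS$, the conditional-expectation expansion with the delayed-read correction handled by Young's inequality, the weighted Lyapunov function (the paper's $\xi_k$ uses exactly your $V^k$ with $c=\sqrt{q_{\min}/\kappa}$, obtained by optimizing the Young parameter $\sigma$), and the resulting supermartingale inequality whose nonnegativity requirement yields the stated bound on $\eta_{\max}$. The final passage from the fundamental inequality to almost-sure convergence via Fej\'er monotonicity and an Opial-type argument is likewise what the paper does by deferring to~\cite[Section 3]{Peng_2015_AROCK}.
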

The proof directly follows~\cite[Section 3]{Peng_2015_AROCK}. Here we only present the key modifications. Interested readers are referred to~\cite{Peng_2015_AROCK} for the complete procedure.
 
The next lemma shows that the conditional expectation of the distance between $z^{k+1}$ 
and any $z^*\in \mathbf{Fix} \TVC=Z^*$ for given $\cZ^k=\{z^0,z^1,\cdots,z^k\}$ has an
upper bound that depends on $\cZ^k$ and $z^*$ only.
\begin{lemma}\label{lemma:fund}
Let $(z^k)_{k\geq 0}$ be the sequence generated by Algorithm
\ref{alg:asyn_overlap}.  Then for
any $z^*\in \mathbf{Fix} \TVC$, we have
\begin{align}\label{eqn:fund_inquality0}
\begin{aligned}
\mathbb{E}\big(\|z^{k+1} - z^* \|_M^2 \,\big|\, \cZ^k\big)  \leq & \|z^{k} - z^*
\|_M^2  +{\sigma\over m}\sum_{d\in J(k)}\|z^d-z^{d+1}\|_M^2\\
+& {1\over m}\left({{|J(k)|}\over \sigma}+{\kappa\over
mq_{\min}}-{1\over \eta_k}\right)\|z^k-\bar z^{k+1}\|_M^2
\end{aligned}
\end{align}
where $\mathbb{E}(\cdot\,|\,\cZ^k)$ denotes conditional  expectation  on $\cZ^k$ and $\sigma>0$ (to be optimized later).
\end{lemma}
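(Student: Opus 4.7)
The plan is to mimic the bounding scheme from~\cite{Peng_2015_AROCK} but replace the standard Euclidean inner product by $\langle\cdot,\cdot\rangle_M$ throughout, and carefully track where the factor $\kappa=\lambda_{\max}/\lambda_{\min}$ enters. First I would expand using the update rule $z^{k+1}=z^k-\frac{\eta_k}{mq_{i_k}}\cS_{i_k}\hat z^k$ from \eqref{eqn:asyn_update_a2}:
\begin{equation*}
\|z^{k+1}-z^*\|_M^2=\|z^k-z^*\|_M^2-\frac{2\eta_k}{mq_{i_k}}\langle z^k-z^*,\cS_{i_k}\hat z^k\rangle_M+\frac{\eta_k^2}{m^2q_{i_k}^2}\|\cS_{i_k}\hat z^k\|_M^2.
\end{equation*}
Taking $\mathbb{E}(\,\cdot\mid\cZ^k)$ over $i_k$ (which has distribution $q_i$) and using $\sum_i\cS_i\hat z^k=\cS\hat z^k$ together with \eqref{eqn:bound_lemma1} gives
\begin{equation*}
\mathbb{E}(\|z^{k+1}-z^*\|_M^2\mid\cZ^k)\le\|z^k-z^*\|_M^2-\frac{2\eta_k}{m}\langle z^k-z^*,\cS\hat z^k\rangle_M+\frac{\eta_k^2\kappa}{m^2q_{\min}}\|\cS\hat z^k\|_M^2.
\end{equation*}

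Next I would split the cross term via $z^k-z^*=(\hat z^k-z^*)-\sum_{d\in J(k)}(z^d-z^{d+1})$. For the ``synchronous'' piece, since $\cS z^*=0$ and $\cS$ is $\tfrac12$-cocoercive under $\|\cdot\|_M$ by Lemma~\ref{lemma:a-avg}, I obtain
\begin{equation*}
-\frac{2\eta_k}{m}\langle \hat z^k-z^*,\cS\hat z^k\rangle_M\le -\frac{\eta_k}{m}\|\cS\hat z^k\|_M^2=-\frac{1}{m\eta_k}\|z^k-\bar z^{k+1}\|_M^2,
\end{equation*}
where the last identity uses $\bar z^{k+1}=z^k-\eta_k\cS\hat z^k$ from~\eqref{eqn:def_bar_x}. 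For each delayed piece, Young's inequality with parameter $\sigma>0$ yields
\begin{equation*}
\frac{2\eta_k}{m}\langle z^d-z^{d+1},\cS\hat z^k\rangle_M=\frac{2}{m}\langle z^d-z^{d+1},\eta_k\cS\hat z^k\rangle_M\le\frac{\sigma}{m}\|z^d-z^{d+1}\|_M^2+\frac{1}{m\sigma}\|z^k-\bar z^{k+1}\|_M^2.
\end{equation*}

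Summing over $d\in J(k)$, combining with the cocoercivity bound and the noise term $\frac{\eta_k^2\kappa}{m^2q_{\min}}\|\cS\hat z^k\|_M^2=\frac{\kappa}{m^2q_{\min}}\|z^k-\bar z^{k+1}\|_M^2$, and collecting coefficients of $\|z^k-\bar z^{k+1}\|_M^2$ produces exactly the right-hand side of~\eqref{eqn:fund_inquality0}. The main delicate point is the cocoercivity step, which is where the passage from the Euclidean norm of~\cite{Peng_2015_AROCK} to the $M$-norm matters; once Lemma~\ref{lemma:a-avg} is invoked, the rest is essentially the standard Young-inequality bookkeeping, and the $\kappa/q_{\min}$ penalty arises solely from~\eqref{eqn:bound_lemma1} and the non-uniform sampling. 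I expect no further obstacles beyond keeping track of the $M$-induced geometry throughout.
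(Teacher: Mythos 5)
Your proposal is correct and follows essentially the same route as the paper's proof: expand the $M$-norm square of the update, take the conditional expectation using $\sum_i\cS_i\hat z^k=\cS\hat z^k$ and the bound $\sum_i\frac{1}{q_i}\|\cS_i\hat z^k\|_M^2\le\frac{\kappa}{q_{\min}}\|\cS\hat z^k\|_M^2$, split the cross term via $\hat z^k=z^k+\sum_{d\in J(k)}(z^d-z^{d+1})$, apply the $\tfrac12$-cocoercivity of $\cS$ under $\|\cdot\|_M$ with $\cS z^*=0$, and finish with Young's inequality on each delayed term. The only differences from the paper are cosmetic (sign conventions and where the factor $\eta_k$ is absorbed before invoking Young's inequality).
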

\begin{proof}
We have
\begin{equation}\label{eqn:equality_inconsistent}
\begin{aligned}
&\mathbb{E}\left(\|z^{k+1} - z^*\|_M^2\,|\,\cZ^k\right)\\
\overset{\eqref{eqn:asyn_update_a2}}=&\mathbb{E}\left(\|z^{k}  -
\textstyle\frac{\eta_k}{mq_{i_k}}\cS_{i_k} \hat{z}^{k}- z^*
\|_M^2\,|\,\cZ^k\right)\\
=& \|z^k -
z^*\|_M^2+\mathbb{E}\left(\textstyle\frac{2\eta_k}{mq_{i_k}} \left\langle
\cS_{i_k} \hat{z}^{k}, z^* - z^k \right\rangle_M +
\textstyle\frac{\eta_k^2}{m^2q_{i_k}^2}
\|\cS_{i_k}\hat{z}^{k}\|_M^2\,\big|\,\cZ^k\right)\\
=&\|z^k -
z^*\|_M^2+\textstyle\frac{2\eta_k}{m} \sum_{i=1}^m\left\langle \cS_i
\hat{z}^{k}, z^* - z^k \right\rangle_M +
\frac{\eta_k^2}{m^2}\sum_{i=1}^m\frac{1}{q_i}\|\cS_i\hat{z}^{k}\|_M^2\\
=&\|z^k
- z^*\|_M^2+\textstyle\frac{2\eta_k}{m} \left\langle \cS \hat{z}^{k}, z^* - z^k
\right\rangle_M +\frac{\eta_k^2}{m^2} \sum_{i=1}^m\frac{1}{q_i}
\|\cS_i\hat{z}^{k}\|_M^2,
\end{aligned}
\end{equation}
where the third equality holds because the probability of choosing $i$ is $q_i$.

Note that
\begin{equation}\label{term2}
\begin{aligned}
\textstyle\sum_{i=1}^m\frac{1}{q_i} \|\cS_i\hat{z}^{k}\|_M^2&\le\frac{1}{q_{\min}}
\sum_{i=1}^m\|\cS_i\hat{z}^{k}\|_M^2\overset{\eqref{eqn:bound_lemma1}}{\leq}
\frac{\kappa}{q_{\min}} \sum_{i=1}^m\|\cS\hat{z}^{k}\|^2_M\\
&\overset{\eqref{eqn:def_bar_x}}{=}\frac{\kappa}{\eta_k^2q_{\min}}\|z^k-\bar{z}^{k+1}\|_M^2,
\end{aligned}
\end{equation}
and
\begin{equation}\label{term1}
\begin{aligned}
&\langle \cS \hat{z}^{k}, z^* - z^k \rangle_M\\
=&\textstyle\langle \cS \hat{z}^{k},z^* - \hat{z}^k + \sum_{d\in J(k)} (z^{d} -
z^{d+1})\rangle_M\cr \overset{\eqref{eqn:def_bar_x}}=&\textstyle\langle \cS
\hat{z}^{k}, z^* - \hat{z}^k\rangle_M + \frac{1}{\eta_k}\sum_{d\in J(k)}\langle
z^k-\bar{z}^{k+1}, z^{d} - z^{d+1}\rangle_M\cr \le&\textstyle \langle \cS
\hat{z}^{k}-\cS z^*, z^* - \hat{z}^k\rangle_M+\frac{1}{2\eta_k}\sum_{d\in
J(k)}\big(\frac{1}{\sigma}\|z^k-\bar{z}^{k+1}\|_M^2+ \sigma\|z^{d} -
z^{d+1}\|_M^2\big)\cr \overset{\eqref{eqn:alpha_avg}}\le&\textstyle
-\frac{1}{2}\|\cS \hat{z}^{k}\|_M^2+\frac{1}{2\eta_k}\sum_{d\in
J(k)}(\frac{1}{\sigma}\|z^k-\bar{z}^{k+1}\|_M^2+ \sigma\|z^{d} -
z^{d+1}\|_M^2)\cr
\overset{\eqref{eqn:def_bar_x}}=&\textstyle-\frac{1}{2\eta_k^2}\|z^k-\bar{z}^{k+1}\|_M^2+
\frac{|J(k)|}{2\sigma\eta_k}\|z^k-\bar{z}^{k+1}\|_M^2+\frac{\sigma}{2\eta_k}\sum_{d\in
J(k)}\|z^{d} - z^{d+1}\|_M^2,
\end{aligned}
\end{equation}
where the first inequality follows from the Young's inequality. Plugging~\eqref{term2} and~\eqref{term1} into~\eqref{eqn:equality_inconsistent} gives the desired result.\hfill\end{proof}

Let $\FF^{\tau+1}=\prod_{i=0}^{\tau}\FF$ be a product space and $\langle\cdot\, |\,\cdot \rangle$ be the induced  inner product:
$$\langle (z^0,\ldots,z^{\tau})\,|\,(\tilde{z}^0,\ldots,\tilde{z}^{\tau})\rangle=\sum_{i=0}^{\tau}\langle z^i,\tilde{z}^i\rangle_M,\quad\forall (z^0,\ldots,z^{\tau}), (\tilde{z}^0,\ldots,\tilde{z}^{\tau})\in\FF^{\tau+1}.$$ 
Define a $(\tau+1)\times(\tau+1)$ matrix $U'$ by 
\begin{equation*}
U':=\begin{bmatrix}1 & 0 & \cdots &0\\
0 & 0 &\cdots & 0\\ \vdots &\vdots & \ddots & \vdots\\
0 & 0 &\cdots & 0 \end{bmatrix}
+\sqrt{\frac{q_{\min}}{\kappa}}\begin{bmatrix} \tau & -\tau &  & \\
-\tau & 2\tau-1 & 1-\tau & \\
 & 1-\tau & 2\tau-3 & 2-\tau  & \\
 & & \ddots & \ddots & \ddots &\\
 & & & -2 & 3  & -1 \\
 & & & &-1 & 1
\end{bmatrix},
\end{equation*}
and let $U=U'\otimes \cI_\FF$. Here $\otimes$ represents the Kronecker product. For a given $(y^0,\cdots,y^\tau)\in\FF^{\tau+1}$, $(z^0,\cdots,z^\tau)=U(y^0,\cdots,y^\tau)$ is given by:
\begin{align*}
&z^0=\textstyle
y^0+\tau\sqrt{\frac{q_{\min}}{\kappa}} (y^0-y^1),\\
&z^i =
\textstyle\sqrt{\frac{q_{\min}}{\kappa}}((i-\tau-1)y^{i-1}+(2\tau-2i+1)y^i+(i-\tau)y^{i+1}),\text{ if } 1\le i\le \tau-1,\\
&z^{\tau}=\textstyle\sqrt{\frac{q_{\min}}{\kappa}} (y^{\tau}-y^{\tau-1}).
\end{align*} 
Then $U$ is a self-adjoint and positive definite linear operator since $U'$ is
symmetric and positive definite, and we define $\langle\cdot\, |\,
\cdot\rangle_U=\langle\cdot\, |\, U\cdot\rangle$ as the $U$-weighted inner
product and $\|\cdot\|_U$ the induced norm.
 
Let 
\begin{equation*}
\vz^k=(z^k,z^{k-1},\ldots,z^{k-\tau})\in \FF^{\tau+1},~k\ge 0,~\vz^* =(z^*,\ldots,z^*)\in\vZ^*\subseteq\FF^{\tau+1},
\end{equation*}
where $z^{k}=z^{0}$ for $k<0$. With
\begin{equation}\label{eqn:xi}
\textstyle \xi_k(\vz^*) := \|\vz^k-\vz^*\|_U^2=\|z^{k} - z^*\|_M^2 +
\sqrt{q_{\min}\over \kappa}\sum_{i=k-\tau}^{k-1} (i-(k-\tau)+1) \|
z^{i} - z^{i+1}\|_M^2,
\end{equation}
we have the following fundamental inequality:
\begin{thm}[fundamental inequality]\label{thm:fund_inquality}
Let $(z^k)_{k\geq 0}$ be the sequence generated by Algorithm~\ref{alg:asyn_overlap}. Then for any $\vz^*\in\vZ^*$, it holds that 
\begin{equation*}
\begin{aligned}
\mathbb{E}\left(\xi_{k+1}(\vz^*) \,\big|\, \cZ^k\right)  
\leq  \xi_k(\vz^*)  + \frac{1}{m}
\left(\frac{2\tau\sqrt{\kappa}}{m\sqrt{q_{\min}}} +
{\kappa\over mq_{\min}} - \frac{1}{ \eta_k}\right)
\|\bar{z}^{k+1} - z^k \|_M^2.
\end{aligned}
\end{equation*}
\end{thm}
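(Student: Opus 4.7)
The plan is to start from the definition \eqref{eqn:xi} of $\xi_k(\vz^*)$, decompose $\mathbb{E}(\xi_{k+1}(\vz^*)\mid\cZ^k)-\xi_k(\vz^*)$ into a ``distance-to-fixed-point'' contribution and a ``history of increments'' contribution, plug Lemma~\ref{lemma:fund} into the former, and finally pick $\sigma$ so that the new $\|z^k-z^{k+1}\|_M^2$ term absorbs all the lagged increments that Lemma~\ref{lemma:fund} leaves behind. Writing $\alpha:=\sqrt{q_{\min}/\kappa}$, expanding the two weighted sums appearing in $\xi_{k+1}$ and $\xi_k$, and re-indexing gives the telescoping identity
\[
\xi_{k+1}(\vz^*)-\xi_k(\vz^*)=\|z^{k+1}-z^*\|_M^2-\|z^k-z^*\|_M^2+\alpha\tau\|z^{k+1}-z^k\|_M^2-\alpha\!\!\sum_{i=k+1-\tau}^{k-1}\!\!\|z^i-z^{i+1}\|_M^2-\alpha\|z^{k-\tau}-z^{k-\tau+1}\|_M^2.
\]

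Next I would apply conditional expectation. On the distance part, Lemma~\ref{lemma:fund} supplies an upper bound involving $\sigma$, $\tfrac{\sigma}{m}\sum_{d\in J(k)}\|z^d-z^{d+1}\|_M^2$, and $\tfrac{1}{m}\bigl(\tfrac{|J(k)|}{\sigma}+\tfrac{\kappa}{mq_{\min}}-\tfrac{1}{\eta_k}\bigr)\|z^k-\bar z^{k+1}\|_M^2$. For the $\alpha\tau\|z^{k+1}-z^k\|_M^2$ piece I would use the update rule $z^{k+1}-z^k=-\tfrac{\eta_k}{mq_{i_k}}\cS_{i_k}\hat z^k$ together with \eqref{eqn:bound_lemma1} to compute
\[
\mathbb{E}\bigl(\|z^{k+1}-z^k\|_M^2\,\bigm|\,\cZ^k\bigr)=\frac{\eta_k^2}{m^2}\sum_i \frac{\|\cS_i\hat z^k\|_M^2}{q_i}\;\le\;\frac{\kappa}{m^2 q_{\min}}\,\|z^k-\bar z^{k+1}\|_M^2,
\]
using the identity $\|z^k-\bar z^{k+1}\|_M=\eta_k\|\cS\hat z^k\|_M$ from \eqref{eqn:def_bar_x}.

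The key manipulation is then the choice $\sigma=m\alpha=m\sqrt{q_{\min}/\kappa}$. With this choice, because $J(k)\subseteq\{k-\tau,\ldots,k-1\}$, the positive term $\tfrac{\sigma}{m}\sum_{d\in J(k)}\|z^d-z^{d+1}\|_M^2$ is dominated pointwise by $\alpha\sum_{i=k-\tau}^{k-1}\|z^i-z^{i+1}\|_M^2$, and the latter is precisely the sum of lagged increments that our telescoping identity \emph{subtracts}. Hence all lagged-increment terms cancel. What remains is exactly the coefficient on $\|z^k-\bar z^{k+1}\|_M^2$: Lemma~\ref{lemma:fund} contributes $\tfrac{1}{m}\bigl(\tfrac{|J(k)|}{\sigma}+\tfrac{\kappa}{mq_{\min}}-\tfrac{1}{\eta_k}\bigr)$, bounded using $|J(k)|\le\tau$ by $\tfrac{1}{m}\bigl(\tfrac{\tau\sqrt{\kappa}}{m\sqrt{q_{\min}}}+\tfrac{\kappa}{mq_{\min}}-\tfrac{1}{\eta_k}\bigr)$, while the telescoping $\alpha\tau\|z^{k+1}-z^k\|_M^2$ contributes $\alpha\tau\cdot\tfrac{\kappa}{m^2q_{\min}}=\tfrac{\tau\sqrt{\kappa}}{m^2\sqrt{q_{\min}}}$, which is another copy of $\tfrac{\tau\sqrt{\kappa}}{m^2\sqrt{q_{\min}}}$. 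Adding the two gives the announced factor $\tfrac{2\tau\sqrt{\kappa}}{m\sqrt{q_{\min}}}+\tfrac{\kappa}{mq_{\min}}-\tfrac{1}{\eta_k}$, divided by $m$.

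The main obstacle is bookkeeping: correctly re-indexing the two weighted sums in $\xi_{k+1}$ and $\xi_k$, keeping track of the lone boundary term $\alpha\|z^{k-\tau}-z^{k-\tau+1}\|_M^2$, and verifying that the worst case $J(k)=\{k-\tau,\ldots,k-1\}$ is simultaneously compatible with the dominance $\tfrac{\sigma}{m}\sum_{d\in J(k)}\|z^d-z^{d+1}\|_M^2\le\alpha\sum_{i=k-\tau}^{k-1}\|z^i-z^{i+1}\|_M^2$ and with the bound $|J(k)|/\sigma\le\tau/\sigma$. Both hold for the single choice $\sigma=m\sqrt{q_{\min}/\kappa}$, which is precisely what makes the two ``cross terms'' balance into the clean coefficient $2\tau\sqrt{\kappa}/(m\sqrt{q_{\min}})$.
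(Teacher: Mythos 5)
Your proposal is correct and follows essentially the same route as the paper's proof: both rest on Lemma~\ref{lemma:fund}, the bound $\mathbb{E}\bigl(\|z^{k+1}-z^k\|_M^2\,\bigm|\,\cZ^k\bigr)\le\frac{\kappa}{m^2q_{\min}}\|z^k-\bar z^{k+1}\|_M^2$ derived from the update rule and \eqref{eqn:bound_lemma1}, the choice $\sigma=m\sqrt{q_{\min}/\kappa}$ that balances $\tau/\sigma$ against $\sigma\tau\kappa/(m^2q_{\min})$, and the domination of $\sum_{d\in J(k)}$ by $\sum_{i=k-\tau}^{k-1}$. The only difference is organizational --- you form the telescoping identity for $\xi_{k+1}-\xi_k$ first and then bound each piece, whereas the paper expands $\mathbb{E}(\xi_{k+1}\mid\cZ^k)$ and recombines the weighted sums into $\xi_k$ at the end --- and your bookkeeping of the weights and the boundary term at $i=k-\tau$ checks out.
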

\begin{proof}Let $\sigma=m\sqrt{\frac{q_{\min}}{\kappa}}$. We have 
\begin{align*}
~&\mathbb{E} (\xi_{k+1}(\vz^*) | \cZ^k)  \\
\overset{\eqref{eqn:xi}}= & \textstyle \mathbb{E} (\|z^{k+1} - z^*\|_M^2| \cZ^k)
+ \sigma\sum_{i=k+1-\tau}^{k} \frac{i-(k-\tau)}{m} \mathbb{E} (\| z^{ i} -
z^{i+1}\|_M^2 | \cZ^k) \\
  \overset{\eqref{eqn:asyn_update_a2}}= & \textstyle\mathbb{E} (\|z^{k+1} -
  z^*\|_M^2| \cZ^k) + \frac{\sigma\tau}{m}
  \mathbb{E}(\frac{\eta_k^2}{m^2q_{i_k}^2}\| S_{i_k}\hat z^k\|_M^2|\cZ^k) + 
  \sigma\sum_{i=k+1-\tau}^{k-1} \frac{i-(k-\tau)}{m} \| z^{i} - z^{i+1}\|_M^2\\
          \le & \textstyle\mathbb{E} (\|z^{k+1} - z^*\|_M^2| \cZ^k) +
          \frac{\sigma\tau\kappa}{m^3q_{\min}} \| z^{k } -
          \bar{z}^{k+1}\|_M^2 + \sigma\sum_{i=k+1-\tau}^{k-1} \frac{i-(k-\tau)}{m} \| z^{i} -
          z^{i+1}\|_M^2\\
  \overset{\eqref{eqn:fund_inquality0}}\leq &\textstyle\|z^k - z^*\|_M^2 +
  \frac{1}{m} \left({|J(k)|\over \sigma} +
  \frac{\sigma\tau\kappa}{m^2q_{\min}} +
  {\kappa\over mq_{\min}} - \frac{1}{\eta_k}\right) \|z^k -
  \bar{z}^{k+1}\|_M^2   \\
&\textstyle+\frac{\sigma}{m}\sum_{d\in J(k)}\|{z}^{d} - {z}^{d+1}\|_M^2 + 
\sigma\sum_{i=k+1-\tau}^{k-1} \frac{i-(k-\tau)}{m} \| z^{i} - z^{i+1}\|_M^2\\
 \leq &\textstyle\|z^k - z^*\|_M^2 + \frac{1}{m} \left({\tau\over \sigma} +
 \frac{\sigma\tau\kappa}{m^2q_{\min}} +
 {\kappa\over mq_{\min}} - \frac{1}{ \eta_k}\right) \|z^k -
 \bar{z}^{k+1}\|_M^2   \\
&\textstyle+\frac{\sigma}{m}\sum_{i=k-\tau}^{k - 1}\|{z}^{i} - {z}^{i+1}\|_M^2 + 
\sigma\sum_{i=k+1-\tau}^{k-1} \frac{i-(k-\tau)}{m} \| z^{i} - z^{i+1}\|_M^2\\
 \overset{\eqref{eqn:xi}}= &\textstyle\xi_k(\vx^*)  + \frac{1}{m}
 \left(\frac{2\tau\sqrt{\kappa}}{m\sqrt{q_{\min}}} +
 {\kappa\over mq_{\min}} - \frac{1}{ \eta_k}\right) \|z^k -
 \bar{z}^{k+1}\|_M^2.
\end{align*}
The first inequality follows from the computation of the conditional expectation on $\cZ^k$
and~\eqref{term2}, the third inequality holds because
$J(k)\subset\{k-1,k-2,\cdots,k-\tau\}$, and the last equality uses
$\sigma=m\sqrt{\frac{q_{\min}}{\kappa}}$, which minimizes ${\tau\over
\sigma} + \frac{\sigma\tau\kappa}{m^2q_{\min}}$ over
$\sigma>0$.
Hence, the desired inequality holds.
\hfill\end{proof}

\end{document}